\theoremstyle{plain}
\newtheorem{theorem}{Theorem}[section]
\newtheorem{prop}[theorem]{Proposition}
\newtheorem{lem}[theorem]{Lemma}
\newtheorem{corol}[theorem]{Corollary}
\newtheorem{defi}[theorem]{Definition}
\newtheorem{rmq}[theorem]{Remark}
\newtheorem{exmp}[theorem]{Example}
\def\<{\left<}
\def\>{\right>}
\def\d{{\partial}}
\def\ens#1{\left\{ #1 \right\}}
\def\fl{{\longrightarrow}\,}
\def\A{{\mathbf{A}}}
\def\B{{\mathbf{B}}}
\def\C{{\mathbf{C}}}
\def\E{{\mathbf{E}}}
\def\T{{\mathbf{T}}}
\def\W{{\mathbf{W}}}
\def\qA{{\mathbf{A}^{\otimes}}}
\def\qE{{\mathbf{E}^{\otimes}}}
\def\qT{{\mathbf{T}^{\otimes}}}
\def\qW{{\mathbf{W}^{\otimes}}}
\def\S{{\mathbf{S}}}
\def\M{{\mathbf{M}}}
\def\SM{(\S,\M)}
\def\bSM{(\overline \S,\overline \M)}
\def\P{{\mathbb{P}}}
\def\Q{{\mathbb{Q}}}
\def\R{{\mathbb{R}}}
\def\Z{{\mathbb{Z}}}
\def\ZP{{\mathbb{ZP}}}
\def\1{\mathbbm{1}}
\def\x{{\mathbf x}}
\def\ens#1{\left\{ #1 \right\}}
\def\Hom{{\rm{Hom}}}
\def\op{{\rm{op}\,}}
\def\min{{\rm{min}\,}}
\def\Iff{\Leftrightarrow}
\newcommand{\rp}{\mathbb{R} {\rm P}^2}
\newcommand{\wt}[1]{\widetilde{#1}}
\newcommand\pgl{{\rm PGL}(2 , \R)}
\newcommand{\tr}{\mathrm{tr}}
\def\Hy{{\mathbf{H}}^2}
\newcommand\psl{{\rm PSL}(2 , \R)}
\newcommand{\SL}{\mathrm{SL}}
\def\crosscap#1#2#3{
	\draw[fill=white] (#1,#2) circle (#3);
	\draw (#1,#2) -- ++(45:#3);
	\draw (#1,#2) -- ++(-45:#3);
	\draw (#1,#2) -- ++(135:#3);
	\draw (#1,#2) -- ++(-135:#3);
}
\def\cc#1#2{
	\crosscap{#1}{#2}{.25}
}
\def\mobdeux#1#2{
	\draw[thick] (#1,#2+2) .. controls (#1-2,#2+1) and (#1-2,#2-1) .. (#1,#2-2);
	\draw[thick] (#1,#2+2) .. controls (#1+2,#2+1) and (#1+2,#2-1) .. (#1,#2-2);

	\fill (#1,#2+2) circle (.1);
	\fill (#1,#2-2) circle (.1);

	\cc {#1} {#2}
}
\def\arca#1#2{
	\draw[red] (#1,#2-2) .. controls (#1-2,#2+.75) and (#1+2,#2+.75) .. (#1,#2-2);
}
\def\arcb#1#2{
	\draw[red] (#1,#2+2) .. controls (#1-2,#2-.75) and (#1+2,#2-.75) .. (#1,#2+2);
}
\def\arcca#1#2{
	\draw[red] (#1,#2-2) .. controls (#1-3,#2+1.5) and (#1+3,#2+1.5) .. (#1,#2-2);
}
\def\arccb#1#2{
	\draw[red] (#1,#2+2) .. controls (#1-3,#2-1.5) and (#1+3,#2-1.5) .. (#1,#2+2);
}
\def\arcc#1#2{
	\draw[red] (#1,#2-2) -- (#1,#2+2);
}
\def\arcd#1#2{
	\draw[red] (#1,#2+.25) .. controls (#1+.75,#2+1) and (#1+.75,#2-1) .. (#1,#2-.25);
}
\def\mobone#1#2{
	\draw[thick] (#1,#2-3) .. controls (#1-6,#2+4) and (#1+6,#2+4) .. (#1,#2-3);
	\fill (#1,#2-3) circle (.1);

	\cc {#1} {#2}
}
\def\arcaone#1#2{
	\draw[red] (#1,#2-3) .. controls (#1-3,#2+1.1) and (#1+3,#2+1.1) .. (#1,#2-3);
}
\def\mobtrois#1#2{
	\draw[thick,fill=white] (#1-2,#2-2) -- (#1+2,#2-2) -- (#1,#2+3) -- cycle;
	\fill (#1,#2+3) circle (.1);
	\fill (#1-2,#2-2) circle (.1);
	\fill (#1+2,#2-2) circle (.1);

}
\def\arcabg#1#2{
	\draw[red] (#1-2,#2-2) .. controls (#1-.25,#2+1.75) and (#1+2,#2) .. (#1-2,#2-2);
}
\def\arccbg#1#2{
	\draw[red] (#1-2,#2-2) .. controls (#1-.5,#2+2.5) and (#1+2.5,#2-.5) .. (#1-2,#2-2);
}
\def\arcabd#1#2{
	\draw[red] (#1+2,#2-2) .. controls (#1+.25,#2+1.75) and (#1-2,#2) .. (#1+2,#2-2);
}
\def\arccbd#1#2{
	\draw[red] (#1+2,#2-2) .. controls (#1+.5,#2+2.5) and (#1-2.5,#2-.5) .. (#1+2,#2-2);
}
\def\arcah#1#2{
	\draw[red] (#1,#2+3) .. controls (#1-1.5,#2-1.25) and (#1+1.5,#2-1.25) .. (#1,#2+3);
}
\def\arcch#1#2{
	\draw[red] (#1,#2+3) .. controls (#1-2,#2-1.75) and (#1+2,#2-1.75) .. (#1,#2+3);
}
\def\arcabgh#1#2{
	\draw[red] (#1-2,#2-2) .. controls (#1-1,#2-1.75) and (#1+.5,#2-1.75) .. (#1,#2+3);
}
\def\arccbgh#1#2{
	\draw[red] (#1-2,#2-2) .. controls (#1-2,#2-1.75) and (#1+2,#2-1.75) .. (#1,#2+3);
}
\def\arcabdh#1#2{
	\draw[red] (#1+2,#2-2) .. controls (#1+1,#2-1.75) and (#1-.5,#2-1.75) .. (#1,#2+3);
}
\def\arccbdh#1#2{
	\draw[red] (#1+2,#2-2) .. controls (#1+2,#2-1.75) and (#1-2,#2-1.75) .. (#1,#2+3);
}
\def\arcab#1#2{
	\draw[red] (#1-2,#2-2) .. controls (#1-.75,#2+.75) and (#1+.75,#2+.75) .. (#1+2,#2-2);
}
\def\arccbb#1#2{
	\draw[red] (#1-2,#2-2) .. controls (#1-.5,#2+1.5) and (#1+.5,#2+1.5) .. (#1+2,#2-2);
}
\begin{document}

\title{Quasi-cluster algebras from non-orientable surfaces} 
\author{Gr\'egoire Dupont}
\address{ESPE de Guadeloupe, Morne Ferret, BP 517, 97178 Abymes CEDEX}
\email{gdupont@espe-guadeloupe.fr}
	
\author{Fr\'ed\'eric Palesi}
\address{Aix Marseille Université, CNRS, Centrale Marseille, I2M, UMR 7373, 13453 Marseille, France}
\email{frederic.palesi@univ-amu.fr}
	
\date{\today}

\thanks{
	This paper was written while both authors were at the Universit\'e de Sherbrooke. The first author was a CRM-ISM postdoctoral fellow under the supervision of Ibrahim Assem, Thomas Br\"ustle and Virginie Charette and was also partially funded by the Tomlinson's Scolarship of Bishop's University. The second author was a CIRGET postdoctoral fellow under the supervision of Virginie Charette and Steve Boyer, and was partially funded by ANR 2011 BS 01 020 01 ModGroup. Both authors would like to thank their respective supervisors for interesting discussions on this topic. 
}

\begin{abstract}
	With any non necessarily orientable unpunctured marked surface $\SM$ we associate a commutative algebra $\mathcal A_{\SM}$, called \emph{quasi-cluster algebra}, equipped with a distinguished set of generators, called \emph{quasi-cluster variables}, in bijection with the set of arcs and one-sided simple closed curves in $\SM$. Quasi-cluster variables are naturally gathered into possibly overlapping sets of fixed cardinality, called \emph{quasi-clusters}, corresponding to maximal non-intersecting families of arcs and one-sided simple closed curves in $\SM$. If the surface $\S$ is orientable, then $\mathcal A_{\SM}$ is the cluster algebra associated with the marked surface $\SM$ in the sense of Fomin, Shapiro and Thurston. 

	We classify quasi-cluster algebras with finitely many quasi-cluster variables and prove that for these quasi-cluster algebras, quasi-cluster monomials form a linear basis.

	Finally, we attach to $\SM$ a family of discrete integrable systems satisfied by quasi-cluster variables associated to arcs in $\mathcal A_{\SM}$ and we prove that solutions of these systems can be expressed in terms of cluster variables of type $A$.
\end{abstract}


\maketitle

\section{Introduction}
	Cluster algebras were initially introduced by Fomin and Zelevinsky in order to study total positivity and dual canonical bases in algebraic groups \cite{cluster1}. Since then, cluster structures have appeared in various areas of mathematics like Lie theory, combinatorics, representation theory, mathematical physics or Teichm\"uller theory. The deepest connections between cluster structures and Teichm\"uller theory is found in the work of Fock and Goncharov \cite{FG:higherTeichmuller}. This latter work led Fomin, Shapiro and Thurston to introduce a particular class of cluster algebras, called \emph{cluster algebras from surfaces} \cite{FST:surfaces}. Such a cluster algebra $\mathcal A_{\SM}$ is associated to a so-called \emph{marked surface} $\SM$, that is a 2-dimensional oriented Riemann surfaces $\S$ with a set $\M$ of marked points. These cluster algebras carry a rich combinatorial structure which was studied in detail, see for instance \cite{MSW:positivity,CCS1,BZ:clustercatsurfaces}. Moreover, it turns out that these combinatorial structures actually reflect geometric properties of the surfaces at the level of the corresponding decorated Teichm\"uller space in the following sense : cluster variables in $\mathcal A_{\SM}$ correspond to $\lambda$-lengths of arcs in $\SM$ and relations between these cluster variables correspond to geometric relations between the corresponding $\lambda$-lengths, see \cite{FT:surfaces2} or \cite[Section 6.2]{GSV:book}. Therefore, the framework of cluster algebras provide a combinatorial framework for studying the Teichm\"uller theory associated to the marked surface $\SM$.

	A key ingredient in the construction of $\mathcal A_{\SM}$ by Fomin, Shapiro and Thurston is the orientability of the surface $\S$. If it is not orientable, then it is in fact not possible to define an exchange matrix and thus an initial seed for the expected cluster algebra. However, relations between $\lambda$-lengths of arcs in $\SM$ can still be described. Using this approach, we associate to any 2-dimensional Riemann marked surface $\SM$, orientable or not, and without punctures, a commutative algebra $\mathcal A_{\SM}$. This algebra is endowed with a distinguished set of generators, called \emph{quasi-cluster variables}, gathered into possibly overlapping sets of fixed cardinality, called \emph{quasi-clusters}, defined by a recursive process called \emph{quasi-mutation}. In this context, the set of quasi-cluster variables is in bijection with the set of arcs and one-sided simple closed curves in $\SM$. The quasi-clusters correspond to maximal collections of arcs and simple one-sided closed curves without intersections, referred to as \emph{quasi-triangulations}, and the notion of quasi-mutation generalises the classical notion of \emph{flip} (sometimes called \emph{Whitehead move}) of a triangulation. As in the orientable case, the algebra $\mathcal A_{\SM}$ imitates the relations for the $\lambda$-lengths of the corresponding curves on the decorated Teichm\"uller space. And if the surface $\SM$ is orientable, then the quasi-cluster algebra $\mathcal A_{\SM}$ coincides with the usual cluster algebra associated to the choice of any orientation of $\SM$.

	We initiate a systematic study of these algebras in the spirit of the study of cluster algebras arising from surfaces. In order to enrich the structure of the quasi-cluster algebra, we first establish numerous identities between the $\lambda$-lengths of curves in any marked surface. In particular, Theorem \ref{theorem:resolution} proves analogues of so-called ``skein relations'' for arbitrary curves in a non-necessarily orientable marked surface, see also \cite{MW:resolutions} for an alternative approach in the orientable case.

	We prove that if $\SM$ is non-orientable, then the structure of $\mathcal A_{\SM}$ can be partially studied through the classical cluster algebra associated to the double cover of $\SM$. However, not all the structure of $\mathcal A_{\SM}$ is encoded in this double cover and $\mathcal A_{\SM}$ provides a new combinatorial setup. We prove in Theorem \ref{thm:laurent} that quasi-cluster algebras satisfy the Laurent property. 

	We prove in Theorem \ref{theorem:classification} that the quasi-cluster algebras with finitely many quasi-cluster variables are those which are associated either with a disc or with a M\"obius strip with marked points of the boundary. In this case, we prove a non-orientable analogue of a classical result of Caldero and Keller \cite{CK1} (see also \cite{MSW:bases}) stating that the set of monomials in quasi-cluster variables belonging all to a same quasi-cluster form a linear basis in a quasi-cluster algebra of finite type (Theorem \ref{theorem:basis}).

	Finally, with any unpunctured marked surface $\SM$, we associate in a uniform way a family of discrete integrable systems satisfied by the quasi-cluster variables corresponding to arcs in $\SM$. This construction does not depend on the orientability of the surface and allows one to realise quasi-cluster variables corresponding to arcs in any quasi-cluster algebra $\mathcal A_{\SM}$ associated to a marked surface $\SM$ as analogues of cluster variables of type $A$.

\section{Preliminaries}
	\subsection{Bordered surfaces with marked points}
		In \cite{FST:surfaces}, Fomin, Shapiro and Thurston defined the notion of a \emph{bordered surface with marked points} $\SM$ where $\S$ is a 2-dimensional Riemann surface with boundary. Implicitly in their definition, the surface $\S$ is orientable. We extend the definition to include non-orientable surfaces as well. 

		Recall that a closed (without boundary or puncture) non-orientable surface is homeomorphic to a connected sum of $k$ projective planes $\rp$. The number $k$ is called the \emph{non-orientable genus} of the surface or simply the genus when no confusion arises. A classical result states that the connected sum of a closed non-orientable surface of genus $k$ with a closed orientable surface of genus $g$ is homeomorphic to a closed non-orientable surface of genus $2g+k$, see \cite{Massey:introduction}. The Euler characteristic of a non-orientable surface $\S$ of genus $k$ is given by $\chi (\S) = 2- k$.

		Let $\S$ be a 2-dimensional manifold with boundary $\partial \S$. Fix a non-empty set $\M$ of marked points in the closure of $\S$, so that there is at least one marked point on each connected component of $\partial \S$. Marked points in the interior of $\S$ are called {\it punctures}.

		Up to homeomorphism, $\SM$ is defined by the following data~:
		\begin{itemize}
			\item the orientability of the manifold $\S$;
			\item the genus $g$ of the manifold;
			\item the number $n$ of boundary components;
			\item the integer partition $(b_1 , \dots , b_n)$ corresponding to the number of marked points on each boundary component;
			\item the number $p$ of punctures.
		\end{itemize}

		For the sake of clarity, in the rest of this article, we will only deal with unpunctured surfaces, namely $p=0$. We also want to exclude trivial cases where $\SM$ does not admit any triangulation by a non-empty set of arcs with endpoints at $\M$, consequently we do not allow $\SM$ to be a an unpunctured monogon, digon or triangle. 
		
	\subsection{Quasi-arcs}
		In non-orientable surfaces, the closed curves are classified into two disjoint sets which will play an important role in this article.

		\begin{defi}
			A closed curve on $\S$ is said to be \emph{two-sided} if it admits a regular neighborhood which is orientable. Else it is said to be \emph{one-sided}. 
		\end{defi}

		Any one-sided curve will reverse the local orientation. Hence a surface contains a one-sided curve if and only if the surface is non-orientable. In the orientable case, we do not worry about such curves.

		An \emph{arc} is the isotopy class of a simple curve in $\SM$ joining two marked points. We denote by $\A\SM$ the set of arcs in $\SM$. A \emph{quasi-arc} in $\SM$ is either an arc or a simple one-sided closed curve in the interior of $\S$. We denote by $\qA\SM$ the set of quasi-arcs in $\SM$. Note that if $\SM$ is orientable, then $\qA\SM= \A\SM$. We denote by $\B\SM$ the set of connected components of $\d\S \setminus \M$, which we call \emph{boundary segments}. 

		To draw non-orientable surfaces, we use the identification of $\rp$, as the quotient of the unit sphere $\mathbb{S}^2 \subset \R^3$ by the antipodal map. When cutting the sphere along the equator, we see that the projective plane is homeomorphic to a closed disc with opposite points on the boundary identified, which is called a \emph{crosscap}. Hence a closed non-orientable surface of genus $k$ is identified with a sphere where $k$ open discs have been removed and the opposite points of each boundary components identified. A crosscap is represented as a circle with a cross inside, see Figure \ref{fig:conventions}.

		\begin{figure}
			\begin{center}
				\begin{tikzpicture}[scale = .75]
					\fill[red] (.5,-.5) node {$d$};
					\arcd 0 0
					\mobdeux 0 0

					\fill[red] (6.75,.75) node {$e$};
					\draw[red] (6,0) circle (.75);
					\mobdeux 6 0

					\fill[red] (12.5,0) node {$c$};
					\arcc {12} 0
					\mobdeux {12} 0
				\end{tikzpicture}
			\end{center}
			\caption{Conventions of drawings in the M\"obius strip with two marked points~: $d$ is a one-sided closed curve, $e$ is a two-sided closed curve and $c$ is an arc.}\label{fig:conventions}
		\end{figure}
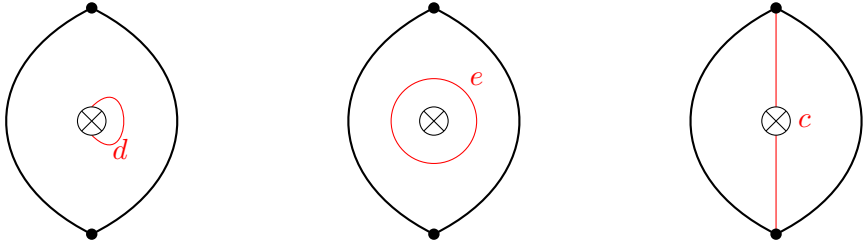

	\subsection{Decorated Teichm\"uller space}
		The classical definitions of Teichm\"uller spaces and decorated Teichm\"uller spaces can easily be extended to include non-orientable surfaces as well, see \cite{Penner:lambda} for a complete exposure.  

		\begin{defi}
			The \emph{Teichm\"uller space} $\mathcal{T} \SM$ consists of all complete finite-area hyperbolic structures with constant curvature $-1$ on $\S \setminus \M$, with geodesic boundary at $\partial \S \setminus \M$. 
		\end{defi}

		\begin{defi}
			A point of the \emph{decorated Teichm\"uller space} $\wt{\mathcal{T}} \SM$ is  a hyperbolic structure as above together with a collection of horocycles, one around each marked point. 
		\end{defi}

		Fix a hyperbolic structure on $\S$, namely an element in $\mathcal{T} \SM$. For any curve $c$ joining two punctures, there is a unique geodesic in its homotopy class. We call this element the geodesic representative of $c$, and by a slight abuse of notation, we will also denote it $c$. Likewise, every closed curve can be represented by a unique geodesic representative on the surface. Recall that any element of the fundamental group $\pi_1 (\S)$ gives rise to the homotopy class of a closed curve, and hence a geodesic representative.

		Given a decorated hyperbolic structure on $\SM$, we recall the definition of Penner's $\lambda$-lengths of a decorated ideal arc and extend it to closed curves.

		\begin{defi} Let $\wt{\sigma} \in \wt{\mathcal{T}} \SM$ be a decorated hyperbolic structure.
		\begin{itemize}
		\item Let $a$ be a decorated ideal arc in $\A \SM$ or in $\B \SM$. The $\lambda$-length of $a$ is defined as
		$$\lambda_{\wt{\sigma}} (a) = \exp \left(\frac{l(a)}{2}\right)$$
		where $l(a)$ is the signed hyperbolic distance along $a$ between the two horocycles at either end of $a$.
		\item Let $b$ be a two-sided closed curve.  The $\lambda$-length of $b$ is defined as 
		$$\lambda_{\wt{\sigma}} (b) = \exp \left( \frac{l(b)}{2} \right) + \exp \left(\frac{-l(b)}{2}\right) = 2 \cosh \left(\frac{l(b)}{2}\right) $$
		where $l(b)$ is the hyperbolic length of the geodesic representative of $b$.
		\item Let $d$ be a one-sided closed curve.  The $\lambda$-length of $d$ is defined as 
		$$\lambda_{\wt{\sigma}} (d) = \exp \left( \frac{l(d)}{2} \right) - \exp \left(\frac{-l(d)}{2}\right) = 2 \sinh \left(\frac{l(d)}{2}\right) $$
		where $l(d)$ is the hyperbolic length of the geodesic representative of $d$.
		\end{itemize}
		\end{defi}

		Remark that this definition does not need the arcs or curves to be simple. In fact we can extend this definition to a finite union of arcs and closed curves.

		\begin{defi} A \emph{multigeodesic} $\alpha$ is a multiset based on the set $ \{ a_1,   \dots , a_n\}$ where each $a_i$ is a curve joining two marked points or any closed curve. Each element of the set has a multiplicity $m_i$. The $\lambda$-length of such a multigeodesic is given by :
		$$\lambda_{\wt{\sigma}} (\alpha) = \prod_{i=1}^n \left(\lambda_{\wt{\sigma}} (a_i)\right)^{m_i}.$$
		\end{defi}

		For a given multigeodesic $\alpha$, one can view the $\lambda$-length as a positive function on the decorated Teichm\"uller space $\wt{\mathcal{T}} \SM$ in the following sense~:
		$$\lambda (\alpha) :
		\left\{\begin{array}{rcl}
			\wt{\mathcal{T}} \SM & \longrightarrow & \R_{>0} \\
			\wt{\sigma} & \longmapsto & \lambda_{\wt{\sigma}} (\alpha).
		\end{array}\right.$$

		Let $\wt{\sigma} \in \wt{\mathcal{T}} \SM$. The holonomy map $\rho_{\sigma}$ of the underlying hyperbolic structure $\sigma \in \mathcal{T} \SM$ defines a homeomorphism from $ \mathcal{T} \SM $ to a connected component of the moduli space 
		$$\Hom ( \pi_1 (\S) , G ) / G $$
		where $G$ is the group  $ \pgl$ of isometries of the hyperbolic plane. The set $\Hom ( \pi_1 (\S) , G )$ is the set of morphism $\rho : \pi_1 (\S) \rightarrow G$ and the $G$-action is by conjugation. Hence any decorated hyperbolic structure $\sigma \in  \mathcal{T} \SM$ gives rise to a conjugacy class of representations  $[ \rho_\sigma ] : \pi_1 (\S) \rightarrow G$.

	For any element of $\pgl$, the absolute value of the trace is well-defined. Let $b$ be a closed curve (one or two-sided) corresponding to an element $\mathbf{b} \in \pi_1 \SM$. Let $\sigma \in  \mathcal{T} \SM$ be a hyperbolic structure and $\rho_\sigma$ be a representative of the conjugacy class $[ \rho_\sigma ]$. The trace is invariant under conjugation, and hence the value of $ | \tr (\rho_\sigma (\mathbf{b}) ) | $ is well-defined and does not depend on the choice of $\rho_\sigma$. Moreover, $\rho_\sigma (\mathbf{b})$ is hyperbolic and a classical result in hyperbolic geometry states that :
	$$\lambda_{\wt{\sigma}} (b) = | \tr (\rho_\sigma (\mathbf{b}) ) |.$$

\section{Quasi-cluster complexes associated with non-orientable surfaces}
	Let $\SM$ be a bordered marked surface without punctures orientable or not. Two elements in $\qA\SM$ are called \emph{compatible} if they are distinct and do not intersect each other. 

	\begin{defi}
		A \emph{quasi-triangulation} of $\SM$ is a maximal collection of compatible elements in $\qA\SM$. A quasi-triangulation is called a \emph{triangulation} if it consists only of elements in $\A\SM$.
	\end{defi}

	\begin{prop}
		Let $T \in \T^\otimes \SM$ be a quasi-triangulation. Then $T$ cuts $\S$ into a finite union of triangles and annuli with one marked point. The number of annuli is the number of one-sided curves in the quasi-triangulation $T$. 
	\end{prop}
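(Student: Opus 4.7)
The plan is to cut $\S$ first along the arcs of $T$ and then along the one-sided curves, using the maximality of $T$ to classify each resulting piece as a triangle or an annulus with one marked point.

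Write $T = T_{\mathrm{arc}} \sqcup T_{\mathrm{one}}$ where $T_{\mathrm{arc}} = T \cap \A\SM$ and $T_{\mathrm{one}}$ consists of the one-sided simple closed curves in $T$. Cutting $\S$ along $T_{\mathrm{arc}}$ produces a finite disjoint union of surfaces-with-boundary $R_1, \ldots, R_k$, each carrying marked points inherited from $\M$ on its boundary (every boundary component of every $R_i$ meets $\M$, since the arcs start and end at $\M$), and each curve of $T_{\mathrm{one}}$ lies in the interior of exactly one $R_i$. The maximality of $T$ transfers to a local maximality in each piece: no arc can be added in the interior of any $R_i$, and no one-sided simple closed curve can be added to $T_{\mathrm{one}} \cap R_i$ without crossing the existing ones; otherwise the extra quasi-arc could be appended to $T$.

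The main topological step is to classify each $R_i$ under these constraints. If $R_i$ is orientable, then $R_i$ admits no internal arc, which forces $R_i$ to be a topological triangle by the classical triangulation statement for orientable surfaces with marked boundary. If $R_i$ is non-orientable of non-orientable genus $g \geq 1$ with $b \geq 1$ boundary components, one has $\chi(R_i) = 2 - g - b \leq 0$. Whenever $\chi(R_i) < 0$, combining a regular neighborhood of a one-sided curve in $T_{\mathrm{one}} \cap R_i$ with the surplus Euler characteristic yields an essential arc compatible with $T$, contradicting maximality; hence $\chi(R_i) = 0$, forcing $g = b = 1$, so $R_i$ is a Möbius strip. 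A Möbius strip with two or more marked points on its boundary admits a diagonal arc disjoint from its core, so maximality forces $R_i$ to carry exactly one marked point; and the only quasi-arc in a once-marked Möbius strip is the core, which must therefore lie in $T_{\mathrm{one}}$.

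Finally, cutting each such Möbius piece along its core one-sided curve produces an annulus whose two boundary circles are the original boundary of $R_i$ (carrying its unique marked point) and the doubled core (carrying none), so $\S \setminus T$ consists of triangles and annuli with one marked point, with one annulus for each element of $T_{\mathrm{one}}$. The main obstacle lies in justifying the Euler-characteristic step in the non-orientable classification: showing that $\chi(R_i) < 0$ always allows the insertion of a new arc compatible with $T$ requires a case analysis based on the distribution of marked points among the boundary components and on the relative position of the crosscaps, and is the only genuinely new topological input beyond the orientable theory.
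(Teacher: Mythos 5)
Your decomposition order is genuinely different from the paper's: you cut along only the arcs of $T$ first, leaving the one-sided curves intact inside the resulting pieces $R_i$, and then classify any non-orientable $R_i$ via an Euler characteristic argument before cutting along the cores. The paper instead cuts along \emph{all} elements of $T$ --- arcs and one-sided curves alike --- in a single step, and this ordering pays off immediately: each resulting piece $K$ contains no interior quasi-arc by maximality, so $K$ cannot be non-orientable (a non-orientable $K$ would contain a one-sided simple closed curve disjoint from $T$, hence a compatible quasi-arc, contradicting maximality). The classification then runs entirely over orientable pieces, by counting boundary components: one boundary with three marked points gives a triangle; two boundaries, one of which is necessarily unmarked because it arose from cutting a one-sided curve, give an annulus with one marked point; and three or more boundaries are ruled out because a loop around one unmarked boundary would be a nontrivial interior arc. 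Your route must instead classify non-orientable $R_i$'s directly, and --- as you acknowledge in your last sentence --- the claim that $\chi(R_i) < 0$ always produces an arc disjoint from $T_{\mathrm{one}} \cap R_i$ is a real piece of work: it requires analysing, case by case, how the deficit in Euler characteristic and the distribution of marked points interact with the curves of $T_{\mathrm{one}}$ sitting in $R_i$. That is precisely the case analysis the paper's one-step cut is designed to sidestep. So: same endpoint, strictly harder intermediate topology on your route, and the hard step is left open in your write-up; the natural way to close it is to perform the remaining cuts along $T_{\mathrm{one}} \cap R_i$ and then invoke the orientability observation, which effectively collapses your argument onto the paper's.
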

	\begin{proof}
		Cut the surface $\S$ open along all arcs and curves of $T$. This splits the surface into a finite union of connected components.  Let $K$ be one of these components. Then $K$ is bordered by at least one boundary component which has at least one marked point. As $T$ is a maximal set of arcs and curves, $K$ does not have any interior quasi-arc. 

		First, we notice that $K$ cannot be non-orientable. Indeed, non-orientability would imply that there exists a one-sided simple closed curve in $K$, which would be a non-trivial interior quasi-arc.

		Assume that $K$ has only one boundary component $\partial K$. Let $m$ be the number of marked points on $\partial K$. If $m = 1$ then the boundary arc is trivial which is excluded. If $m = 2$ then the two boundary arcs are homotopic which is excluded. And if $m \geq 4$, then $K$ would admit interior non-trivial arcs as diagonal of the $m$-gon. Hence, we infer that $m = 3$ and $K$ is a triangle.

		Then suppose that $K$ has two boundary components. If both components have marked points, then the curve joining the marked points of each boundary components would be a non-trivial interior arc of $K$. Hence, necessarily one of the boundary is unmarked. Moreover, if the marked boundary component has more than one marked point then the non trivial curve joining one marked point to itself going around the unmarked boundary component will not be homotopic to a boundary segment of $K$ and hence will be a non-trivial interior quasi-arc.

		Finally, $K$ cannot have three or more boundary components, as an arc from the marked point to itself going around one unmarked boundary but not the other one would be a non-trivial interior arc. So $K$ is either a triangle or an annuli with one marked point which proves the first part of the proposition.

		For the second part of the proposition, we simply notice that an unmarked boundary component can only be obtained by cutting along a simple closed curve. Hence, the number of annuli is exactly the number of one-sided curves on the surface $\S$.
	\end{proof}

	\subsection{Quasi-mutations}
		\begin{defi}
			An \emph{anti-self-folded triangle} is any triangle of a quasi-triangulation with two edges identified by an orientation-reversing isometry.
		\end{defi}

		\begin{prop}\label{prop:flip}
			Let $\SM$ be an unpunctured marked surface and let $T$ be a quasi-triangulation of $\SM$. Then for any $t \in T$, there exists a unique $t' \in \qA\SM$ such that $t' \neq t$ and such that $\mu_t(T) = T \setminus \ens{t} \sqcup \ens{t'}$ is a quasi-triangulation of $\SM$. 
		\end{prop}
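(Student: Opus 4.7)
The plan is to reduce the question to a local analysis near $t$ using the decomposition of $\S$ along $T$ afforded by the preceding proposition. Let $R$ denote the union of the (at most two) pieces of this decomposition whose closure contains $t$, together with any one-sided curves of $T$ in the interior of $R$. Any candidate $t'$ for the flip must be a simple quasi-arc in $\overline{R}$ compatible with every element of $T$ lying in the interior of $R$; conversely, any such $t'$ for which $T \setminus \ens{t} \sqcup \ens{t'}$ again cuts $\S$ into triangles and annuli with one marked point will yield a valid flip by the preceding proposition, and so it suffices to show that exactly one such $t'$ exists inside $R$.

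Next I would carry out a case analysis according to the types of the pieces adjacent to $t$. If $t$ is a two-sided arc whose two adjacent pieces are triangles, then $R$ is a topological quadrilateral and $t'$ is the unique other diagonal, recovering the classical Whitehead move. If $t$ is a two-sided arc with exactly one adjacent piece an annulus whose interior one-sided curve is $d \in T$, then $R$ is a M\"obius strip with two marked points $m$ and $v$ on its boundary and $d$ in its interior, and $t'$ is the unique simple loop at $v$ winding once around $d$. The case where both adjacent pieces are annuli is treated analogously, the resulting $R$ being a higher-genus non-orientable region containing two one-sided curves of $T$. Finally, if $t$ is a one-sided closed curve, then $R$ is the unique annulus piece with $t$ as its unmarked boundary, which embeds in $\S$ as a M\"obius strip neighborhood of $t$ with a single marked point $m$, and $t'$ is the unique simple loop at $m$ bounding this M\"obius strip.

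For existence I would check in each case that the asserted $t'$ is a quasi-arc of the claimed type, that it is automatically compatible with $T \setminus \ens{t}$ since $t' \subset \overline{R}$, and that $T \setminus \ens{t} \sqcup \ens{t'}$ again cuts $\S$ into triangles and annuli with one marked point, so that maximality and hence the quasi-triangulation property follow from the preceding proposition. For uniqueness the key observation is that in each case the local region $R$ admits exactly two quasi-triangulations extending the prescribed boundary and interior data, namely the one containing $t$ and the one containing $t'$. The hard part is the bookkeeping in the M\"obius-strip cases: one must classify up to isotopy the simple arcs and simple closed curves in a M\"obius strip with a small number of marked boundary points and a fixed interior one-sided core, and rule out every candidate other than the asserted $t'$ by showing that it either fails simplicity, intersects an interior one-sided curve of $T$, is isotopic to a boundary segment or to an existing element of $T$, or fails to produce a decomposition into the required pieces.
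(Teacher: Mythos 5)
Your overall strategy---restrict to the union $R$ of the pieces adjacent to $t$ in the decomposition furnished by the previous proposition, then case-split on the types of those pieces---is exactly the paper's, but your case list has a genuine gap and a false case.

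The gap: you overlooked the situation where $t$ is a two-sided arc incident to only \emph{one} piece, namely an anti-self-folded triangle in which two of the three edges are the same arc $t$, identified by an orientation-reversing map. Your Case~A (``two adjacent pieces are triangles'') tacitly assumes these are \emph{distinct} triangles, yielding a quadrilateral $R$; in the anti-self-folded case $R$ is instead a M\"obius strip $\mathcal M_1$ with one marked point, and the quasi-flip $t'$ is the \emph{one-sided simple closed curve} at the core of that strip, not an arc. This is precisely the case that forces the statement to be about $\qA\SM$ rather than $\A\SM$; omitting it means your argument never produces a quasi-flip of an arc that fails to be an arc, so the proof does not actually establish the proposition as stated. (Symmetrically, it is the inverse of your Case~D, where $t$ is one-sided and $t'$ is this same arc.)

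Separately, your Case~C (``both adjacent pieces are annuli'') should not be ``treated analogously'': it cannot occur under the hypotheses. Each annulus piece has its single marked vertex on a boundary loop consisting of a single arc, so if both sides of $t$ are annuli then $T=\{t,d_1,d_2\}$ and $\S$ is a Klein bottle with the unique marked point interior, i.e.\ a puncture, which is excluded. The paper dismisses this case for exactly this reason. Finally, a smaller imprecision: in Case~D the flip $t'$ is not a loop ``bounding this M\"obius strip''---cutting the strip along $t'$ produces a disc (the anti-self-folded triangle), not a sub-M\"obius-strip---rather it is the unique nontrivial arc at $m$ inside $R$, which necessarily crosses the core $t$ once. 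You identify the right object but describe it by a property it does not have.
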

		\begin{proof}
			If $t$ is an arc separating two different triangles, then this is standard : the two triangles define a quadrilateral with $t$ as a diagonal and $t'$ is the unique other diagonal.
			
			If $t$ is an arc which is an edge of a single triangle $\Delta$, then either $\Delta$ is a self-folded triangle or an anti-self-folded triangle. As we have excluded punctured surfaces, $\Delta$ is necessarily an anti-self-folded triangle. Denote the third side of $\Delta$ by $c$. Then $c$ is an arc bounding a M\"obius strip $N$ and $t$ is the only non-trivial arc in $N$. There is a unique non-trivial simple closed curve $t'$ in $N$ corresponding to the core of the M\"obius strip. The curve $t'$ and the arc $t$ intersect once, and hence $t'$ is the desired element of $\qA\SM$.
			
			Similarly, if $t$ is a one-sided simple closed curve, then $t$ lies inside a M\"obius strip $N$ bounded by an arc $c$. And $t'$ is the only non-trivial arc inside $N$.
			
			If $t$ is an arc separating a triangle from an annuli, then we are in the situation given in Figure \ref{fig:qmutM2bis}. The mutation is exactly a quasi-flip in the sense of Penner (see \cite{Penner:bordered}) which gives the unicity of the arc $t'$.
			
			Finally, if $t$ is an arc separating two annuli, then necessarily $\S$ is a once-punctured Klein bottle which we have excluded from our hypotheses on $\SM$.		
		\end{proof}

		\begin{figure}
			\def\arcdnoir#1#2{
					\draw (#1,#2+.25) .. controls (#1+.75,#2+1) and (#1+.75,#2-1) .. (#1,#2-.25);
				}

			\begin{center}
				\begin{tikzpicture}[scale = .75]

					\arcdnoir {0} 0
					\mobdeux 0 0
					\arcca 0 0
					\fill[red] (0,1) node [] {$t$};

					\fill (-2,0) node {$a$};
					\fill (2,0) node {$b$};

					\fill (.25,-.75) node {$d$};
				

					\draw[->] (3,0) -- (5,0);
					\fill (4,0) node [above] {$\mu_t$};

					
					\arcdnoir {8} 0
					\mobdeux {8} 0
					\arccb {8} 0
					\fill[red] (8,-1) node [] {$t'$};

					\fill (8-2,0) node {$a$};
					\fill (8+2,0) node {$b$};

					\fill (8.25,.75) node {$d$};

				\end{tikzpicture}
			\end{center}
			\caption{A quasi-mutation}\label{fig:qmutM2bis}
		\end{figure}

		\begin{defi}
			With the notation of Proposition \ref{prop:flip}, the quasi-triangulation $\mu_t(T)$ is called the \emph{quasi-mutation of $T$ in the direction $t$} and the element $t'$ in $\qA\SM$ is called the \emph{quasi-flip} of $t$ with respect to $T$.

			If both $t$ and $t'$ are arcs, then $\mu_t$ is called a \emph{mutation} and $t'$ is called the \emph{flip} of $t$ with respect to $T$.
		\end{defi}

		\begin{exmp}
			Figure \ref{fig:qmutM2} depicts examples of two quasi-mutations in the M\"obius strip $\mathcal M_2$ with two marked points. The quasi-mutation $\mu_{c_b}$ is a mutation whereas the quasi-mutation $\mu_b$ is not.
			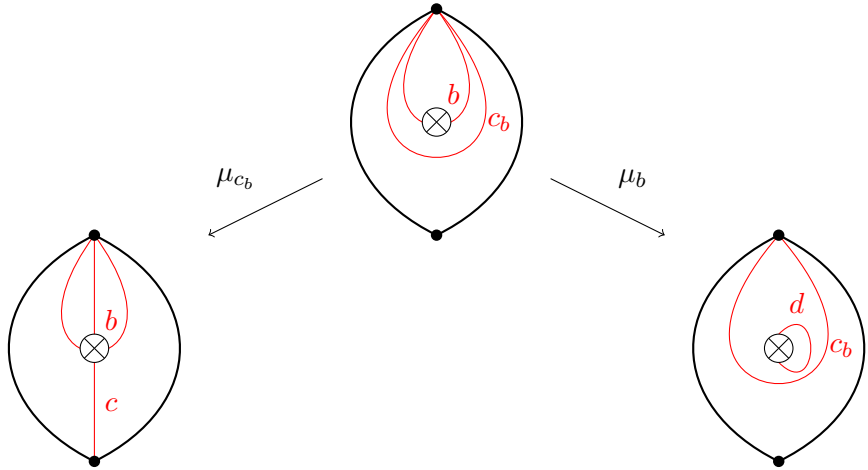
\begin{figure}
				\begin{center}
					\begin{tikzpicture}[scale = .75]
						\arcb 0 0
						\arccb 0 0 
						\mobdeux 0 0
						\fill[red] (0,.5) node [right] {$b$};
						\fill[red] (1.1,0) node {$c_b$};

						\draw[->] (2,-1) -- (4,-2);
						\fill (3,-1) node [above,right] {$\mu_b$};

						\draw[->] (-2,-1) -- (-4,-2);
						\fill (-3,-1) node [above,left] {$\mu_{c_b}$};

						\arcb {-6} {-4}
						\arcc {-6} {-4}
						\mobdeux {-6} {-4}
						\fill[red] (-6,-3.5) node [right] {$b$};
						\fill[red] (-6,-5) node [right] {$c$};

						\arcd {6} {-4}
						\arccb {6} {-4}
						\mobdeux {6} {-4}
						\fill[red] (6,-3.2) node [right] {$d$};
						\fill[red] (7.1,-4) node {$c_b$};
					\end{tikzpicture}
				\end{center}
				\caption{Examples of quasi-mutations in $\mathcal M_2$.}\label{fig:qmutM2}
			\end{figure}
		\end{exmp}

		\begin{prop}\label{prop:rank}
			Let $\SM$ be a marked surface without puncture. Then the number of elements in a quasi-triangulation does not depend on the choice of the quasi-triangulation and is called the \emph{rank} of the surface $\SM$.
		\end{prop}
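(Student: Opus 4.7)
The plan is to prove this by an Euler characteristic computation applied to the decomposition from the proposition at the start of this section. Fix a quasi-triangulation $T$ of $\SM$, set $q = |T|$, and let $k$ be the number of one-sided simple closed curves in $T$ (so $T$ contains $q - k$ arcs). By that proposition, cutting $\S$ along $T$ produces $t$ triangles together with $k$ annuli, each annulus carrying a single marked point on its marked boundary component.

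I would promote this decomposition to a genuine CW structure on $\S$ by placing an auxiliary vertex $v_c$ on each one-sided curve $c \in T$ (so that $c$ becomes a loop at $v_c$) and inserting a radial edge inside each annulus joining the marked point of the outer boundary to $v_c$; this turns each annulus into a single $2$-cell. The key geometric observation is that the attaching map of such an annular $2$-cell winds \emph{twice} around $c$, because cutting a M\"obius neighborhood of $c$ along its core yields an annulus whose inner boundary double-covers $c$. A direct count then gives $V = |\M|+k$, $E = q+|\B\SM|+k$, $F = t+k$, hence
\[
\chi(\S) = |\M| - q - |\B\SM| + t + k.
\]

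A second relation comes from double-counting edge-face incidences with multiplicity: each triangle contributes $3$, while each annular $2$-cell contributes $5 = 1 + 2 + 2$ (outer edge once, radial edge twice, one-sided loop twice); dually, each interior arc contributes $2$, each boundary segment $1$, each one-sided loop $2$, each radial edge $2$. This gives $3t + 5k = 2q + |\B\SM| + 2k$, i.e.\ $3t = 2q + |\B\SM| - 3k$. Substituting into the previous equality eliminates both $t$ and $k$ and yields $q = 3|\M| - 2|\B\SM| - 3\chi(\S)$; since $\SM$ is unpunctured we have $|\M| = |\B\SM|$, so
\[
q = |\M| - 3\chi(\S),
\]
which depends only on $\SM$. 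The main subtlety lies in correctly tracking the multiplicity $2$ with which each one-sided loop is glued to its annular $2$-cell; the outer-boundary count (one edge per annulus) follows directly from the fact, established in the proof of the earlier decomposition proposition, that the marked boundary component of each such annulus contains a single marked point.
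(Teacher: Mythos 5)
Your proof is correct, but it takes a different route from the paper's. The paper cites the standard Euler-characteristic count for genuine triangulations (yielding the formula $N = 3k-6+3n+\sum b_i$) and then reduces the general quasi-triangulation case to this one by invoking Proposition~\ref{prop:flip}: quasi-flipping each one-sided curve in $T$ to its companion arc produces a triangulation with the same cardinality. You instead perform the Euler-characteristic computation directly on an arbitrary quasi-triangulation, by promoting the triangles-and-annuli decomposition to a CW structure (adding an auxiliary vertex and a radial edge per annulus) and carefully tracking the multiplicity $2$ with which the annular $2$-cell winds around each one-sided loop. Your computation checks out: the incidence count $3t + 5k = 2q + |\B\SM| + 2k$ and the Euler formula $\chi = |\M|-q-|\B\SM|+t+k$ together give $q = 3|\M| - 2|\B\SM| - 3\chi(\S) = |\M| - 3\chi(\S)$, which agrees with the paper's cited formula when $p=0$. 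Your approach is more self-contained, not deferring the triangulation case to a reference and not needing Proposition~\ref{prop:flip} at all, at the cost of the somewhat delicate bookkeeping of the double-winding attaching map; the paper's is shorter because it leverages the already-established quasi-flip, whose uniqueness immediately pairs off quasi-triangulations with equinumerous triangulations.
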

		\begin{proof}
			For triangulations, this is easily done by consideration on the Euler characteristic of the surface (see for instance \cite{FG:dualTeichmuller}) and the number of interior arcs for a non-orientable surface of genus $k$ with $p$ punctures and $n$ boundary components having each $b_i$ marked points is given by
			$$ N = 3k - 6 + 3n + 3p + \sum_{i=1}^n b_i .$$
			For quasi-triangulation containing one or more one-sided simple closed curves, we can associate to each one-sided curve the unique arc given by the Proposition \ref{prop:flip}. Therefore to each quasi-triangulation corresponds a triangulation which has the same number of elements.
		\end{proof}

		\begin{exmp}
			For any $n \geq 1$, we denote by $\mathcal M_n$ the M\"obius strip with $n$ marked points on the boundary. It is a non-orientable surface of rank $n$.
		\end{exmp}

		\begin{corol}\label{corol:caracqmut}
			Let $\SM$ be an unpunctured marked surface and let $T$ be a triangulation of $\SM$. Then for any $t \in T$, the quasi-mutation in the direction $t$ of $T$ is a mutation if and only if $t$ is not the internal arc of an anti-self-folded triangle in $T$.

			In this case, we say that $t$ is \emph{mutable with respect to $T$}.
		\end{corol}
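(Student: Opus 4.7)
The plan is to reduce to the case analysis already carried out in the proof of Proposition \ref{prop:flip}. First I would observe that since $T$ is a triangulation, it contains no one-sided simple closed curve, so cutting $\S$ along $T$ produces only triangles (the preceding proposition says that the number of annular components equals the number of one-sided curves in $T$, which here is zero). Consequently, each $t\in T$ is an edge of either two distinct triangles or of a single triangle in which two edges are identified to $t$.

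In the first case, the two triangles form a quadrilateral with $t$ as a diagonal, and the first paragraph of the proof of Proposition \ref{prop:flip} shows that the quasi-flip $t'$ is the other diagonal; in particular $t'\in\A\SM$, so $\mu_t$ is a mutation and $t$ is visibly not the internal arc of an anti-self-folded triangle.

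In the second case, the single triangle $\Delta$ containing $t$ twice must be either self-folded or anti-self-folded. Since $\SM$ is unpunctured, the self-folded possibility is excluded, so $\Delta$ is anti-self-folded and $t$ is by definition its internal arc. The second paragraph of the proof of Proposition \ref{prop:flip} then identifies $t'$ with the core of the M\"obius strip bounded by the third edge of $\Delta$, which is a one-sided simple closed curve; hence $\mu_t$ is not a mutation.

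The two cases being mutually exclusive and exhaustive, this yields the required equivalence. I do not anticipate any real obstacle: all the geometric content has already been established in Proposition \ref{prop:flip}, and the corollary only amounts to singling out the branches of that case analysis which actually arise for a triangulation of an unpunctured surface (the configurations involving an arc bordering an annular region, as in Figure \ref{fig:qmutM2bis}, do not occur because $T$ contains no one-sided closed curve).
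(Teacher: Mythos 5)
Your proof is correct and takes essentially the same approach as the paper's, merely making explicit the reduction to the case analysis of Proposition~\ref{prop:flip} (and noting that the annular cases are ruled out because $T$, being a triangulation, contains no one-sided closed curves). The paper's argument is a compressed version of the same reasoning: not an internal arc of an anti-self-folded triangle yields a quadrilateral and hence a diagonal flip, while the anti-self-folded case yields the core of a M\"obius strip as quasi-flip.
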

		\begin{proof}
			If $t$ is not the internal arc of an anti-self-folded triangle in $T$, then removing $t$ in $T$ delimits a quadrilateral $Q$ in which $t$ is a diagonal and $t'$ is the other diagonal. In particular, $t'$ is an arc and $\mu_t$ is a mutation.

			Conversely, if $t$ is the internal arc of an anti-self-folded triangle, then there is an arc $x$ in $T$ such that locally around $t$, the triangulation looks like the situation depicted in Figure \ref{fig:nonmutable}. Thus the quasi-flip $t'$ of $t$ is an element in $\qA\SM \setminus \A\SM$ and $\mu_t$ is not a mutation.
			\begin{figure}
				\begin{center}
					\begin{tikzpicture}[scale = .5]
						\arcaone 0 0
						\mobone {0} 0 
						\fill (0,2.5) node {$x$};

						\fill[red] (1,0) node {$t$};

						\draw[->] (2,0) -- (4,0);
						\fill (3,0) node [below] {$\mu_t$};

						\arcd 6 0
						\mobone {6} 0 
						\fill[red] (7,0) node {$t'$};
						\fill (6,2.5) node {$x$};
					\end{tikzpicture}
				\end{center}
				\caption{Quasi-mutation at a non-mutable arc in a triangulation.}\label{fig:nonmutable}
			\end{figure}
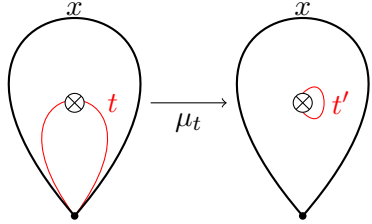
		\end{proof}

	\subsection{Quasi-exchange graph}
		Let $\SM$ be an unpunctured marked surface of rank $n \geq 1$. 

		\begin{defi}
			The \emph{quasi-cluster complex} $\Delta^{\otimes}\SM$ is the (possibly infinite) simplicial complex on the ground set $\qA\SM$ defined as the clique complex for the compatibility relation. The vertices in $\Delta^{\otimes}\SM$ are the elements in $\qA\SM$ and the maximal simplices are the quasi-triangulations.

			Similarly, the \emph{cluster complex} $\Delta\SM$ is the simplicial complex on the ground set $\A\SM$ defined as the clique complex for the compatibility relation. In other words, the vertices in $\Delta\SM$ are the elements in $\A\SM$ and the maximal simplices are the triangulations.
		\end{defi}

		\begin{defi}
			The dual graph of $\Delta^{\otimes}\SM$ is denoted by $\qE\SM$ and is called the \emph{quasi-exchange graph} of $\SM$. Its vertices are the quasi-triangulations of $\SM$ and its edges correspond to quasi-mutations.

			The dual graph of $\Delta\SM$ is denoted by $\E \SM$ and is called the \emph{exchange graph} of $\SM$. Its vertices are the triangulations of $\SM$ and its edges correspond to mutations.
		\end{defi}

		\begin{prop}
			$\qE\SM$ is a connected $n$-regular graph.
		\end{prop}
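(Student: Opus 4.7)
The plan is to treat the two statements separately. The $n$-regularity is almost immediate. By Proposition \ref{prop:rank}, every quasi-triangulation of $\SM$ has exactly $n$ elements, so each vertex of $\qE\SM$ has at most $n$ incident edges. Proposition \ref{prop:flip} furthermore produces, for each $t\in T$, a unique quasi-flip $t'\neq t$ and the corresponding quasi-triangulation $\mu_t(T)$. The uniqueness ensures that the edges of $\qE\SM$ at $T$ are in bijection with the elements of $T$; hence $T$ has degree exactly $n$.

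For connectedness, the strategy is to reduce to the known result that the flip graph of \emph{triangulations} is connected. First I would show that any quasi-triangulation $T$ containing $k$ one-sided simple closed curves can be joined in $\qE\SM$ to a triangulation by a sequence of $k$ quasi-mutations. Indeed, if $c\in T$ is a one-sided closed curve, the analysis in the proof of Proposition \ref{prop:flip} shows that $c$ sits inside a M\"obius neighbourhood $N$ bounded by an arc of $T$, and that the quasi-flip $c'$ of $c$ is the unique non-trivial arc inside $N$. Thus $\mu_c(T)$ contains one fewer one-sided curve than $T$, and iterating this process $k$ times produces a triangulation $T^\flat$ connected to $T$ in $\qE\SM$.

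It then suffices to show that the exchange graph $\E\SM$ embeds in $\qE\SM$ with connected image, i.e.\ that any two triangulations of $\SM$ are linked by a sequence of flips in $\A\SM$. In the orientable case, this is the classical theorem of Harer and Hatcher, see for instance \cite{FST:surfaces}. In the non-orientable unpunctured setting, the same statement holds and can be obtained either by invoking Penner's work \cite{Penner:bordered} on flips in bordered surfaces (which allows anti-self-folded triangles among the admissible local configurations), or by lifting triangulations to the orientation double cover $\widetilde{\S}\to \S$ and using equivariance of the classical flip-connectedness there. The latter approach is the one most compatible with the forthcoming sections of the paper, but requires some care because mutating the non-mutable arc of an anti-self-folded triangle downstairs does not lift to a single flip upstairs; this has to be dealt with separately by combining such a move with neighbouring flips, or by simply excluding such arcs from the flip sequence and invoking Corollary \ref{corol:caracqmut}.

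The main obstacle is thus the connectedness of the flip graph of triangulations for non-orientable unpunctured surfaces with anti-self-folded triangles, which is the delicate step; once it is available, combining it with the reduction in the previous paragraph immediately yields the connectedness of $\qE\SM$.
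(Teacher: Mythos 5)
Your proposal follows essentially the same strategy as the paper: derive $n$-regularity from Proposition \ref{prop:flip} (each $t\in T$ has a unique quasi-flip, giving exactly $n$ distinct neighbours), reduce an arbitrary quasi-triangulation to a triangulation by quasi-mutating away each one-sided closed curve, and then appeal to flip-connectedness of triangulations. The paper simply asserts that last fact as ``well-known''; your observation that it deserves a justification in the non-orientable setting (e.g.\ via Penner's bordered theory or a double-cover argument) is a fair remark but does not mark a divergence in approach.
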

		\begin{proof}
			According to Proposition \ref{prop:flip}, every element in a quasi-triangulation can be quasi-mutated and quasi-mutations in distinct directions give rise to distinct quasi-triangulations. It thus follows that $\qE\SM$ is $n$-regular. Proving that $\qE\SM$ is connected is equivalent to proving that two quasi-triangulations are connected by a sequence of quasi-mutations. The proof that $\E \SM$ is connected can be found in \cite{Hatcher:triangulations} (Hatcher's arguments do not use the orientability of the surface). Now it is enough to observe that each quasi-triangulation  $T$ which is not a triangulation can be related to a triangulation by a sequence of quasi-mutations, one at each one-sided curve in $T$. Therefore, any two quasi-triangulations are related by a sequence of quasi-mutations, which proves the proposition.
		\end{proof}

		\begin{exmp}\label{exmp:qexchM2}
			The cluster and quasi-cluster complexes for the M\"obius strip $\mathcal M_2$ are depicted in Figures \ref{fig:complexM2} and \ref{fig:qcomplexM2}.
			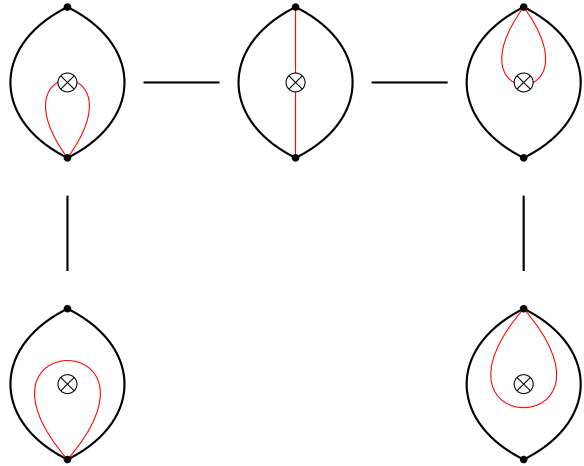
\begin{figure}
				\begin{center}
					\begin{tikzpicture}[scale = .5]
				%
				%
						\arca {-6} 0 
						\mobdeux {-6} 0 

						\arcc 0 0 
						\mobdeux 0 0 

						\arcb {6} 0 
						\mobdeux {6} 0 

						\arcca {-6} {-8} 
						\mobdeux {-6} {-8}


						\arccb {6} {-8} 
						\mobdeux {6} {-8}

						\draw[thick] (-2,0) -- (-4,0);
						\draw[thick] (2,0) -- (4,0);
						\draw[thick] (-6,-3) -- (-6,-5);
						\draw[thick] (6,-3) -- (6,-5);
					\end{tikzpicture}
				\end{center}
				\caption{The cluster complex of the M\"obius strip with two marked points.}\label{fig:complexM2}
			\end{figure}

			\begin{figure}
				\begin{center}
					\begin{tikzpicture}[scale = .5]
				%
				%
						\arca {-6} 0 
						\mobdeux {-6} 0 

						\arcc 0 0 
						\mobdeux 0 0 

						\arcb {6} 0 
						\mobdeux {6} 0 

						\arcca {-6} {-8} 
						\mobdeux {-6} {-8}

						\arcd {0} {-8}
						\mobdeux {0} {-8}

						\arccb {6} {-8} 
						\mobdeux {6} {-8}

						\draw[thick] (-2,0) -- (-4,0);
						\draw[thick] (2,0) -- (4,0);
						\draw[thick] (-2,-8) -- (-4,-8);
						\draw[thick] (2,-8) -- (4,-8);
						\draw[thick] (-6,-3) -- (-6,-5);
						\draw[thick] (6,-3) -- (6,-5);
					\end{tikzpicture}
				\end{center}
				\caption{The quasi-cluster complex of the M\"obius strip with two marked points.}\label{fig:qcomplexM2}
			\end{figure}
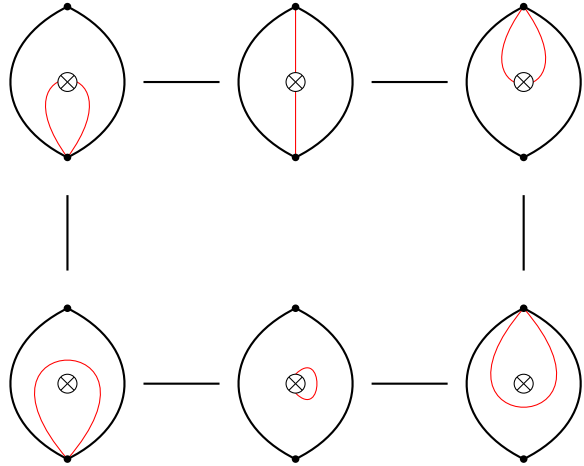
		\end{exmp}

		\begin{exmp}\label{exmp:qexchM3}
			The cluster complex $\Delta(\mathcal M_3)$ of the M\"obius strip with three marked points is depicted in Figure \ref{fig:complexM3}. The exchange graph $\E(\mathcal M_3)$ has 16 vertices and its faces are six pentagons. The quasi-cluster complex $\qE(\mathcal M_3)$ is obtained from the cluster complex by adding the unique one-sided curve in $\mathcal M_3$ as a vertex of the complex and by connecting the six ``external'' vertices of the cluster complex to this unique one-sided curve. Therefore, the quasi-exchange graph $\Delta^{\otimes}(\mathcal M_3)$ is a polytope with 22 vertices and whose faces are three squares, six pentagons and four hexagons.

			\begin{figure}
				\begin{center}
					\begin{tikzpicture}[scale = .25]


						\draw (8,1) -- (11,5);
						\draw (3,7) -- (9,7);

						\draw (-14,9) -- (-19,13);

						\draw (0,17) -- (0,12);

						\draw (-1,17) -- (-10,9);
						\draw (1,17) -- (10,9);

						\draw (3,18) -- (19,16);
						\draw (-3,18) -- (-19,16);

						\draw (-4.5,1) -- (-1,4);
						\draw (4.5,1) -- (1,4);

						\draw (-2,-1) -- (2,-1);
						
						\draw (-5,-3) -- (-2,-6);
						\draw (5,-3) -- (2,-6);

						\draw (-12,4) -- (-12,-4);
						
						\draw (12,4) -- (12,-4);

						\draw (-14,-6) -- (-22,13);

						\draw (14,-6) -- (22,13);

						\draw (-8,1) -- (-11,5);

						\draw (-3,7) -- (-9,7);

						\draw (14,9) -- (19,13);

						\draw (7,-3) -- (10,-6);

						\draw (-7,-3) -- (-10,-6);

						\draw (-3,-8) -- (-10,-8);

						\draw (3,-8) -- (10,-8);

						\draw (0,-11) -- (0,-14);

						\draw (12,-11) -- (2,-17);

						\draw (-12,-11) -- (-2,-17);

					
						\mobtrois {0} {20}
						\arccbgh {0} {20} 	
						\cc {0} {20} 	
					
						\mobtrois {-22} {16}
						\arccbg {-22} {16} 	
						\cc {-22} {16} 	

						\mobtrois {22} {16}
						\arcch {22} {16} 
						\cc {22} {16}

						\mobtrois {-12} {8}
						\arcabg {-12} {8} 
						\cc {-12} 8

						\mobtrois {12} {8}
						\arcah {12} {8} 
						\cc {12} 8
				
						\mobtrois 0 8
						\arcabgh 0 8 		
						\cc 0 8

						\mobtrois 6 0
						\arcabdh 6 0 			
						\cc 6 0

						\mobtrois {-6} 0
						\arcab {-6} 0 			
						\cc {-6} 0

						\mobtrois {0} {-8}
						\arcabd {0} {-8} 			
						\cc 0 {-8}

						\mobtrois {-12} {-8}
						\arccbb {-12} {-8} 			
						\cc {-12} {-8}
						
						\mobtrois {12} {-8}
						\arccbdh {12} {-8} 
						\cc {12} {-8} 

						\mobtrois {0} {-18}
						\arccbd {0} {-18}
						\cc {0} {-18} 			
					\end{tikzpicture}
				\end{center}
				\caption{The cluster complex of the M\"obius strip $\mathcal M_3$.}\label{fig:complexM3}
			\end{figure}
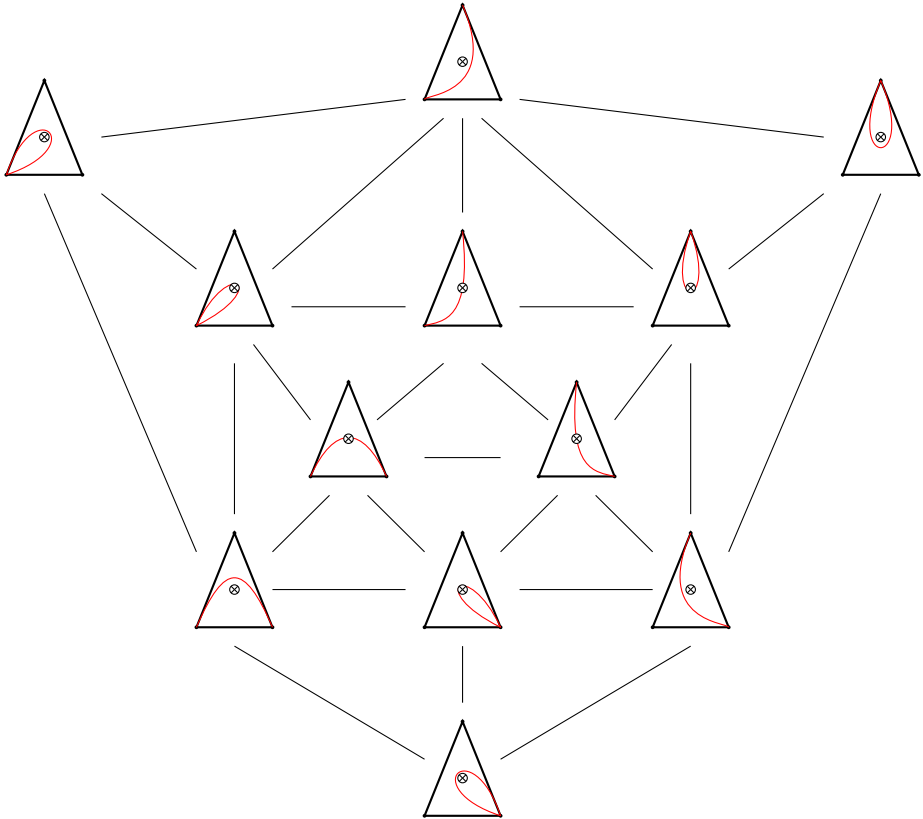
		\end{exmp}

		\begin{rmq}
			Note that if $\SM$ is not orientable, then $\E\SM$ is not regular, as it appears for instance in Figure \ref{fig:complexM2}.
		\end{rmq}

\section{Relations between quasi-arcs}

	\subsection{Hyperbolic geometry in the upper half-plane}
		We use throughout this paper the upper half-plane model of the hyperbolic plane
		$$\Hy = \left\{ z \in \mathbb{C} | \rm{Im} (z) >0 \right\} $$
		endowed with the Riemannian metric 
		$$ds^2 := \dfrac{dx^2 + dy^2}{y^2}.$$
		Geodesics in $\Hy$ are given either by circles perpendicular to the real axis or by lines parallel to the imaginary axis. The points of the boundary $\partial \Hy$ are elements of $\R \cup \{\infty\}$.

		The group $\pgl$ can be defined as the quotient of two-by-two matrices with determinant plus or minus one, by the group $\{ \pm I \}$. Note that the sign of the determinant is still well-defined on the quotient. It acts on $\Hy$ by M\"obius and anti-M\"obius transformations. The group of isometries of the hyperbolic plane is naturally identified with $\pgl$.  An element with determinant one will correspond to an orientation-preserving isometry, and an element with determinant minus one will correspond to an orientation-reversing isometry.

		An \emph{horocycle} in the upper half-plane is a euclidean circle parallel to the real axis, or a horizontal line parallel to the real axis. Hence a horocycle $U$ is defined by its center $u \in \R \cup \{\infty\}$ and its diameter $h \in \R_{>0}$ (for a horocycle centered at $\infty$ its diameter is the inverse of the height of the parallel), and is denoted $U = (u,h)$.

		A \emph{decorated geodesic} is a geodesic joining two points $u$ and $v$ on $\R \cup \{\infty\}$ together with horocycles $U$ and $V$ centered at $u$ and $v$, and is denoted by $(U,V)$.  For horocycles $U = (u,h)$ and $V = (v,k)$ with distinct centers $u, v \in \R$, one can express the $\lambda$-length of the decorated geodesic as
		$$\lambda (U,V) = \dfrac{|v-u|}{\sqrt{hk}}.$$
		As shown by Penner \cite{Penner:lambda}, we have $\lambda (U , V) = \exp (\delta/2)$ where $\delta$ is the signed hyperbolic distance between the two horocycles along the geodesic.

	\subsection{Decorated Teichm\"uller space}

	The main purpose of $\lambda$-lengths is to provide coordinates on the decorated Teichm\"uller space of an orientable surface, see \cite{Penner:lambda}. We extend this result to include non-orientable surfaces as well using quasi-arcs and quasi-triangulations. First we have to settle the case of a M\"obius strip with one marked point on the boundary in the following proposition~:

		\begin{prop}\label{prop:anti-self}
			Let $c , d \in \R_{>0}$. There exists a unique isometry class of triple of horocycles $(U,V,W)$ such that :
			\begin{itemize}
			\item $\lambda (U , V) = c$ ;
			\item there is an orientation-reversing isometry $D$ such that $D(U) = W$, $D(W)=V$ and $|\tr (D)| = d$.
			\end{itemize}
		\end{prop}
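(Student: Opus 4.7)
The plan is to use the $\pgl$-action on $\Hy$ to put the configuration into a standard form and then solve an explicit system. First, after conjugation by an element of $\pgl$, I may assume $U = (0, 1)$ and $V = (\infty, c^2)$, so that $\lambda(U, V) = c$ holds automatically; the stabilizer of this ordered pair in $\pgl$ is the order-two subgroup generated by the reflection $z \mapsto -\overline{z}$, and this residual freedom will account for the ambiguity in the eventual choice of $W$.

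Next, I write the orientation-reversing isometry $D$ as $D(z) = (\alpha \overline{z} + \beta)/(\gamma \overline{z} + \delta)$ with $\alpha \delta - \beta \gamma = -1$, and denote by $w$ the center of $W = D(U)$. The conditions $D(0) = w$ and $D(w) = \infty$ (the latter coming from $D(W) = V$ since $V$ has center $\infty$) translate into $\gamma w + \delta = 0$ and $\beta = -\delta^2/\gamma$; the determinant relation then yields $\alpha + \delta = -1/\delta$, and the trace condition $d = |\tr D| = |\alpha + \delta|$ forces $|\delta| = 1/d$. A short computation identifies the orientation-preserving isometry $D^2$ as $D^2(z) = ((\alpha - \delta)z + \beta)/(\gamma z)$, which maps the horocycle $U$ to the horizontal line at height $-\beta/\gamma$. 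The requirement $D^2(U) = V$ then becomes $-\beta/\gamma = c^2$, which together with $\beta = -\delta^2/\gamma$ forces $\gamma = \pm 1/(cd)$.

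Each sign of $\gamma$ yields a valid configuration with $W = (\pm c, d^2)$, which proves existence; the two configurations are exchanged by the residual reflection $z \mapsto -\overline{z}$ fixing $U$ and $V$, and therefore belong to the same isometry class, which proves uniqueness. I expect the main technical step to be the identification of $D^2$, where the key simplification is that the constraints force $\beta\gamma + \delta^2 = 0$, killing the constant term in the denominator of $D^2$ and leading to the clean closed form above.
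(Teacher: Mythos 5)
Your proof is correct, and it follows a genuinely different route from the paper's. The paper conjugates $D$ into the diagonal form $\bigl(\begin{smallmatrix}\mu & 0 \\ 0 & -1/\mu\end{smallmatrix}\bigr)$, observes that any such $D$ forces $\lambda(U,W)=\lambda(W,V)=:a$, invokes the classical Penner fact that a triple of horocycles is determined up to isometry by the three $\lambda$-lengths $(c,a,a)$, and then computes $|\tr D| = c/a$ to conclude that the relevant class has $\lambda$-data $(c, c/d, c/d)$. You instead normalize the pair $(U,V)$ to the standard positions $(0,1)$ and $(\infty, c^2)$ using the $\pgl$-action, write $D$ as a general anti-Möbius map, and solve the constraint system for its entries, pinpointing $W$ directly. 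The paper's route is shorter because it outsources the uniqueness to the Penner coordinate theorem and yields the memorable $\lambda$-length triple $(c,c/d,c/d)$; yours is more self-contained and exhibits the actual configuration. Your verification is sound: the reduction $\beta\gamma+\delta^2=0$ does collapse the denominator of $D^2$, and the two solutions $\gamma=\pm 1/(cd)$ are indeed conjugate by the residual reflection $z\mapsto -\overline z$ (which sends one $D$ to the other, hence one $W$ to the other). One small omission: you assert $W=(\pm c, d^2)$ but never justify the diameter $d^2$; this follows from $|f'(0)| = 1/\delta^2 = d^2$ for the holomorphic part $f$ of $D$, or it can be sidestepped entirely since you only need that the two candidate $W$'s are exchanged by the reflection, which already follows from the conjugacy of the two $D$'s.
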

		\begin{proof}
			Let $(U,V,W)$ be a triple of horocycles. If there is an isometry $\phi$ such that $\phi(U) = W$ and $\phi(W) = V$ then we have that $\lambda(U,W) = \lambda (W,V)$.

			For any $a \in \R_{>0}$, there exists a unique isometry class of horocycles $(U,V,W)$ such that $\lambda(U,V) = c$ and $\lambda(U,W) = \lambda (W,U) = a$. Now let $D$ be the unique orientation reversing isometry such that $D(U) = W$ and $D(W) = V$. Up to conjugacy and rescaling we can assume that $D$ is represented by a matrix of the form
			$$D = \left( \begin{array}{cc} \mu & 0 \\ 0 & - 1/\mu \end{array} \right), \mbox{ with } \mu > 1.$$
			If we denote the three horocycles by $U = (u,h)$, $V = (v,k)$ and $W = (w,l)$ then we have the following relations :
			$$w = - \mu^2 u,  \hspace{0.5cm} v = - \mu^2 w = \mu^4 u,  \hspace{0.5cm} k = \mu^2 l = \mu^4 h,$$
			 $$\dfrac{|v-u|}{\sqrt{hk}} = c, \hspace{0.5cm} \dfrac{|w-u|}{\sqrt{hl}} = \dfrac{|v-w|}{\sqrt{kl}} = a.$$
			This gives :
			$$c = \dfrac{|u|}{h} \left( \mu^2 - \dfrac{1}{\mu^2} \right) \mbox{  and  } a = \dfrac{|u|}{h} \left( \mu + \dfrac{1}{\mu} \right).$$
			We infer that
			$$|\tr (D) | = \mu - \dfrac{1}{\mu} = \dfrac{c}{a}.$$

			We conclude that for any $c,d>0$, there exists a unique isometry class of triple of horocycles with $\lambda$-length $(c , c/d , c/d)$. This isometry class satisfies the property that an orientation-reversing isometry sending one of the side of length $c/d$ on the other one, has a trace of absolute value $d$. 
		\end{proof}

		\begin{theorem}\label{theorem:lambdaindalg}
			For any quasi-triangulation $T \in \qT\SM$, the natural mapping 
			$$\Lambda_T : \left\{
			\begin{array}{rcl}
				\wt{\mathcal{T}} (\S , M) & \longrightarrow & \R_{> 0}^{T \cup \B\SM} \\
				\sigma & \longmapsto & ( t \mapsto \lambda_\sigma (t))
			\end{array}\right.$$
			is a homeomorphism.
		\end{theorem}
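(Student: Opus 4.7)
The strategy is to reduce the statement to the elementary pieces of the quasi-triangulation $T$. By the proposition on quasi-triangulations, cutting $\S$ along every element of $T$ yields a finite disjoint union where each component is either a triangle bounded by three edges of $T \cup \B\SM$ or an annulus with exactly one marked point, with the number of annuli equal to the number of one-sided curves in $T$. Each annular piece can be viewed as a Möbius strip with one marked point on the boundary, cut along its core one-sided curve. The problem then reduces to parameterising each elementary piece and describing the gluing.

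For each of the two types of pieces, the parameterisation is already available. A triangle is classically determined up to decorated isometry by its three $\lambda$-lengths via the formula $\lambda(U,V) = |v-u|/\sqrt{hk}$ in the upper half-plane, and every positive triple is realised \cite{Penner:lambda}. A Möbius strip with one marked point is determined up to decorated isometry by the $\lambda$-length of its bounding arc together with the $\lambda$-length of the core one-sided curve (which equals $|\tr|$ of the orientation-reversing holonomy), and every positive pair is realised: this is exactly the content of Proposition \ref{prop:anti-self}. Combining these two facts yields, for any positive tuple $(\lambda_t)_{t\in T\cup\B\SM}$, a candidate isometry class of decorated hyperbolic structure on each piece.

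To assemble these local models into a global structure on $\SM$, I would mimic Penner's inductive scheme for bordered orientable surfaces. Pick one piece, realise it in $\Hy$ with the prescribed $\lambda$-lengths, and add adjacent pieces successively by gluing along shared edges of $T$. The main point, and the expected obstacle, is to check that this gluing is rigid, with no residual twist parameter. Since $\SM$ is unpunctured, every marked point lies on $\partial \S$ and has incident boundary segments; the $\lambda$-lengths of these boundary segments and of the arcs of $T$ incident to the marked point determine the horocycle radius there uniquely. Once the horocycles at all marked points are fixed, each piece admits a unique placement in $\Hy$ and the identification along each interior edge of $T$ is forced: along a two-sided arc it is the usual orientation-preserving gluing as in Penner, while along a one-sided curve it is the orientation-reversing isometry $D$ of Proposition \ref{prop:anti-self} whose trace is already constrained by the corresponding $\lambda$-length. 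This simultaneously establishes injectivity (the structure is reconstructed uniquely from the $\lambda$-lengths) and surjectivity (any positive tuple is realised) of $\Lambda_T$. Continuity of $\Lambda_T$ and of $\Lambda_T^{-1}$ is immediate from the explicit rational expressions for $\lambda$-lengths in terms of horocycle centres and radii, and conversely, so $\Lambda_T$ is a homeomorphism.
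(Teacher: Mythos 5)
Your proposal follows essentially the same strategy as the paper's proof: both reduce to Penner's developing-map argument for the triangulated part of the surface (extended to allow orientation-reversing deck transformations) and invoke Proposition \ref{prop:anti-self} to handle the M\"obius strip attached to each one-sided curve in $T$. The only cosmetic difference is that the paper cuts $\S$ only along the arcs $c$ bounding M\"obius strips to isolate a triangulated subsurface $\S'$, whereas you cut along all of $T$ to obtain triangles and annuli --- but these feed into Proposition \ref{prop:anti-self} identically, since an annulus with its self-gluing along $d$ is exactly a M\"obius strip with one marked point.
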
 
		\begin{proof}
			For an orientable surface with boundaries, this is the classical result of Penner on coordinates for the decorated Teichm\"uller space \cite{Penner:bordered}.

			If $\SM$ is a non-orientable surface and $T$ is a triangulation (without one-sided closed curves), then this theorem is a straightforward generalisation of Penner's result. We give here the argument that differs and we refer to \cite{Penner:lambda} for the sake of completeness. 

			Recall that the idea of the original proof is to produce an inverse for the map $\Lambda_T$. So suppose there is a positive real number assigned to each arc in a triangulation $T$. From the triangulation of the surface $\S$, we get a triangulation of the universal cover $\wt{\S}$. From this, we get a corresponding triangulation of the hyperbolic plane $\Hy$, constructed by induction on the set of triangles. This gives a homeomorphism $\phi : \wt{\S} \rightarrow \Hy$, which is the developing map for the hyperbolic structure. 

			The holonomy map $\rho : \pi_1 (\S) \rightarrow \pgl$ defined by the developing map $\phi$ sends one-sided curves to orientation-reversing isometries. These isometries are elements of $\pgl$ that are not in $\psl$. The group $\pgl$ acts transitively on triples of horocycles, whereas $\psl$ acts transitively only on positively oriented triples of horocycles. So we can get anti-M\"obius transformations in addition to M\"obius transformations between two identified triangles in $\Hy$. This is the only slight difference with the orientable case and this does not change the other arguments of Penner's proof.

			The only thing that is left to show, is that the theorem still holds for quasi-triangulations containing one-sided simple closed curves. For any one-sided closed curve in a quasi-triangulation, we have a unique corresponding arc that bounds a M\"obius strip. Suppose there is only one such curve $d$ and cut the surface along the corresponding arc $c$. We get a subsurface $\S'$ with $c$ as a boundary arc, and the surface $\S$ is obtained by gluing a M\"obius strip along $c$. The quasi-triangulation of $\S$ restricted to $\S'$ is a triangulation and we can apply the preceding arguments to construct the unique hyperbolic structure on $\S'$ defined by the $\lambda$-lengths. 

			Then we use the Proposition \ref{prop:anti-self} to show that the $\lambda$-length of the one-sided curve $d$ together with the $\lambda$-length of $c$ uniquely define a hyperbolic structure on the M\"obius strip. There is no restriction when gluing back this M\"obius strip to the surface $\S'$. Hence we have defined a unique hyperbolic structure on the whole surface $\S$.
		\end{proof}

\subsection{Intersections}
	The following theorem generalises the well-known Ptolemy relations for arcs to the case of arbitrary curves in $\SM$. An interesting consequence is that, by ``resolving intersections'' recursively, it allows one to write the $\lambda$-length of a multigeodesic with intersections as a linear combination of $\lambda$-lengths of multigeodesics consisting of pairwise compatible simple curves. This will be crucial in the proof of Theorem \ref{theorem:basis}. Note that in the orientable case, a similar result has recently appeared in \cite{MW:resolutions}.

	\begin{theorem}\label{theorem:resolution}
		Let $\alpha$ be a multigeodesic with an intersection point $p$. Then we can write
		$$\lambda(\alpha) =  \varepsilon_1  \lambda (\beta) + \varepsilon_2  \lambda (\gamma), $$
		where $\beta$ and $\gamma$ are the two multigeodesics obtained by resolving the intersection at $p$, and $\varepsilon_1 , \varepsilon_2 \in \{ -1 , 1 \}$ are functions depending only on the topological type of $\alpha$, $\beta$ and $\gamma$ (see Figure \ref{fig:resolution}).
	\end{theorem}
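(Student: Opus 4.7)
The plan is to reduce the problem to a local statement at the intersection point $p$ and then invoke, in each configuration, the fundamental trace identity in $SL(2,\R)$,
$$\tr(A)\tr(B) = \tr(AB) + \tr(AB^{-1}),$$
or its Ptolemy-type counterpart for arcs; the last step will be to argue that the signs must be topological invariants by a connectedness argument.

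First I would localize. Since $\lambda$ is multiplicative on multigeodesics and the resolutions $\beta, \gamma$ coincide with $\alpha$ outside a small neighborhood of $p$, one can factor out the $\lambda$-lengths of all strands not passing through $p$ and reduce to the case where $\alpha$ consists of exactly two strands meeting at $p$ (possibly two sides of a single self-intersecting curve).

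Then I would case-split on the nature of the strands at $p$. When both strands are closed curves $c_1, c_2$, I would lift to the universal cover, pick a basepoint at a lift of $p$, and let $A = \rho(\mathbf{c}_1)$, $B = \rho(\mathbf{c}_2)$ be the corresponding elements of $\pgl$ lifted to $GL(2,\R)$ with $\det = \pm 1$. The two topological resolutions at $p$ correspond precisely to the based loops $AB$ and $AB^{-1}$, so the trace identity gives
$$\lambda(c_1)\lambda(c_2) = |\tr A \cdot \tr B| = |\tr(AB) + \tr(AB^{-1})| = \varepsilon_1 \lambda(\mathrm{res}_1) + \varepsilon_2 \lambda(\mathrm{res}_2),$$
where each $\varepsilon_i \in \{\pm 1\}$ is the sign of $\tr$ of the corresponding resolved holonomy. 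Self-intersections are handled identically after decomposing the curve at $p$ into two loops based at $p$. When at least one strand is an arc, I would instead work in the upper half-plane and use the formula $\lambda(a) = |v-u|/\sqrt{hk}$ for each arc $a$ with decorated endpoints; the classical Ptolemy relation for two crossing arcs then reduces to an elementary four-point algebraic identity on $\R$, and the mixed arc/closed-curve case follows from a hybrid computation combining horocycle data at one end with the trace of the closed-curve holonomy.

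Finally, the signs $\varepsilon_1, \varepsilon_2$ are locally constant as functions on $\wt{\mathcal T}\SM$, because each sign is the sign of a trace that does not vanish for a non-trivial curve (loxodromic elements and glide reflections have strictly positive $\lambda$-length by the computations of Proposition \ref{prop:anti-self}). By connectedness of the decorated Teichm\"uller space, these signs depend only on the isotopy classes of $\alpha, \beta, \gamma$, proving the topological-type statement. The main obstacle I anticipate is the sign bookkeeping in the genuinely non-orientable configurations: when a strand at $p$ is a one-sided closed curve, its holonomy lies in $\pgl \setminus \psl$, forcing a lift with $\det = -1$, and one must then carefully track how the signs of $\tr(AB)$ and $\tr(AB^{-1})$ change according to whether each resolution produces a one-sided or a two-sided curve. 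This is precisely where the non-orientable skein relations genuinely differ from the orientable ones treated in \cite{MW:resolutions}.
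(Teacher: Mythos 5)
Your proposal follows essentially the same route as the paper's proof: localize to the one or two strands at $p$, lift to the universal cover, use the Ptolemy relation for two arcs, the trace identity for two closed curves (and self-intersections, decomposed at $p$), explicit horocycle computations for mixed arc/curve crossings, and a connectedness-of-$\wt{\mathcal{T}}\SM$ argument for the constancy of the signs. One precision worth making explicit (the paper does): the correct $GL(2,\R)$ identity is $\tr(A)\tr(B)=\tr(AB)+\det(B)\tr(AB^{-1})$, so for a one-sided strand the $\det(B)=-1$ factor enters the relation itself rather than being a sign to be tracked after the fact, and the non-vanishing of the relevant traces is justified by $|\tr|\geq 2$ for two-sided curves and by faithfulness of the holonomy (a glide reflection with zero trace would be an involution) for one-sided curves, not by Proposition~\ref{prop:anti-self}.
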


	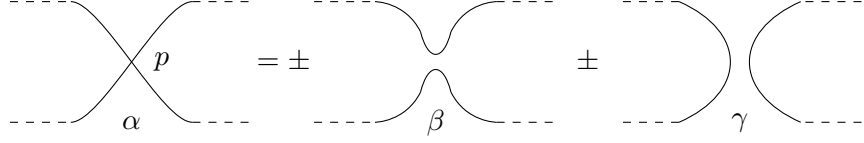
\begin{figure}
		\begin{center}
			\begin{tikzpicture}[scale = .4]

				\draw[dashed] (-19,2) -- (-17,2);
				\draw[dashed] (-19,-2) -- (-17,-2);

				\fill (-15,-2) node {$\alpha$};

				\draw (-17,2) .. controls (-16,2) and (-14,-2) .. (-13,-2);
				\draw (-17,-2) .. controls (-16,-2) and (-14,2) .. (-13,2);

				\fill (-14,0) node {$p$};

				\draw[dashed] (-13,2) -- (-11,2);
				\draw[dashed] (-13,-2) -- (-11,-2);

				\fill (-10,0) node {$= \pm$};

				\draw[dashed] (-9,2) -- (-7,2);
				\draw[dashed] (-9,-2) -- (-7,-2);

				\draw (-7,2) .. controls (-7,2) and (-6,2) .. (-5.5,1);
				\draw (-7,-2) .. controls (-7,-2) and (-6,-2) .. (-5.5,-1);

				\draw (-4.5,1) .. controls (-4,2) and (-3,2) .. (-3,2);
				\draw (-4.5,-1) .. controls (-4,-2) and (-3,-2) .. (-3,-2);

				\draw (-4.5,1) .. controls (-4.75,0) and (-5.25,0) .. (-5.5,1);
				\draw (-4.5,-1) .. controls (-4.75,0) and (-5.25,0) .. (-5.5,-1);

				\fill (-5,-2) node {$\beta$};

				\draw[dashed] (-3,2) -- (-1,2);
				\draw[dashed] (-3,-2) -- (-1,-2);

				\fill (0,0) node {$\pm$};

				\draw[dashed] (9,2) -- (7,2);
				\draw[dashed] (9,-2) -- (7,-2);

				\fill (5,-2) node {$\gamma$};

				\draw (7,2) .. controls (4.75,1) and (4.75,-1) .. (7,-2);
				\draw (3,2) .. controls (5.25,1) and (5.25,-1) .. (3,-2);

				\draw[dashed] (3,2) -- (1,2);
				\draw[dashed] (3,-2) -- (1,-2);
			\end{tikzpicture}
		\end{center}
		\caption{Resolving an intersection of a multigeodesic.}\label{fig:resolution}
	\end{figure}

	\begin{rmq}
		In this identity, we consider $\lambda(\alpha)$, $\lambda (\beta)$ and $\lambda(\gamma)$ as functions on the Teichm\"uller space.
	\end{rmq}

	This theorem is a generalisation of both the Ptolemy relation between simple arcs and the trace identities for matrices in $\SL (2 , \mathbb{C})$. The generalisations are probably well-known to the specialists, however there are several cases for which there seems to be no reference in the literature. Moreover, in order to keep things self-contained, we give a complete proof even for classical situations.

	\begin{proof}
		First notice that the resolution of an intersection at a point $p$ only modifies the elements of the multigeodesic crossing at $p$. Hence, we only have to show the relation for multigeodesics with only one or two elements, and the general result will hold by induction. For the rest of the proof, let $\wt{\sigma} \in \wt{\mathcal{T}}(\S)$ be a decorated hyperbolic structure. We will omit the subscript and write $\lambda_{\wt{\sigma}} (a)= \lambda (a)$.

		\subsubsection{Two distinct arcs}
			Let $\alpha = \{a , b\}$ with $a$ and $b$ be two different arcs with endpoints $a_0 , a_1 , b_0 , b_1$ (not necessarily all disjoint), intersecting at some point $p \in \S$. 

			Choose a lift of $\wt{p} \in \wt{S} = \Hy$ and denote by $\wt{a}$ and $\wt{b}$ the two unique ideal decorated geodesics that pass through $\wt{p}$. This defines four different endpoints that we denote $\wt{a_0} , \wt{a_1} ,\wt{b_0} , \wt{b_1}$. These four points are necessarily disjoint (even if they are lifts of the same point in the surface) so this gives rise to a quadrilateral with sides 
			$$\wt{c} = (\wt{a_0} , \wt{b_0}), \hspace{0.5cm} \wt{d} = (\wt{b_0} , \wt{a_1}), \hspace{0.5cm} \wt{e}  = (\wt{a_1} , \wt{b_1}), \hspace{0.5cm} \wt{f}  = (\wt{b_1} , \wt{a_0}) .$$

			The diagonals of this quadrilateral are $\wt{a}$ and $\wt{b}$. The Ptolemy relation in $\Hy$ gives  
			$$\lambda (\wt{a}) \lambda (\wt{b}) = \lambda (\wt{c})\lambda (\wt{e}) + \lambda(\wt{d})\lambda(\wt{f}).$$
			When returning to the surface, the arc $c$ is the projection of $\wt{c}$. It is homotopic to the arc starting at $a_0$ following $a$ until it reaches $p$ and then following $b$ until it reaches $b_0$. The same applies to the arcs $d$, $e$ and $f$. By definition of the $\lambda$-length of the arcs $a,b,c,d,e,f$ we have
			\begin{equation*}
			\lambda(a) \lambda(b) = \lambda (c) \lambda(e) + \lambda (d) \lambda (f).
			\end{equation*}

			The resolution of the intersection gives $\beta = \{c, e\} $ and $\gamma = \{d ,f\}$.

		\subsubsection{Two closed curves}
			Let $\alpha = \{a ,b\} $ with $a$ and $b$ two distinct geodesic curves intersecting at some point $p \in \S$. We denote $\mathbf{a}$ and $\mathbf{b}$ the corresponding elements of the fundamental group $\pi_1 (\S , p)$ of the surface based at $p$ up to a choice of orientation of the curves. The holonomy map of the hyperbolic structure sends $\mathbf{a}$ and $\mathbf{b}$ to elements $A$ and $B$ of $\pgl$. We take matrix representatives in $\mbox{GL}(2, \R)$ such that $|\det (A)| = |\det (B)| = 1$ and $\tr (A) , \tr (B) > 0$. 

			The following formula holds for all such matrices :
			$$ \tr(A) \tr (B) = \tr(AB) + \det (B) \tr (AB^{-1}).$$

			The $\lambda$-length of $\alpha$ is given by $\lambda(\alpha) = |\tr (A)| |\tr(B)|$. The matrices $AB$ and $AB^{-1}$ correspond to the holonomy of the curves $a \ast b$ and $a \ast b^{-1}$. These curves are exactly the ones given by the resolution of the intersection at $p$. So we have $\lambda (\beta) = |\tr (AB)|$ and $\lambda (\gamma) = |\tr (A B^{-1})|$.

			So it is clear that there exists $\varepsilon_1$ and $\varepsilon_2$ in $\{ -1 , 1 \}$ such that
			$$\lambda(\alpha) = \varepsilon_1 \lambda(\beta) + \varepsilon_2 \lambda(\gamma).$$
			The only thing to show is that the elements $\varepsilon_1$ and $\varepsilon_2$ do not depend on the choice of the decorated hyperbolic structure $\wt{\sigma}$. To do that we use a continuity argument. 

			The functions $\tr(AB)$ and $\tr(AB^{-1})$ are continuous on the decorated Teichm\"uller space. For any given hyperbolic structure $\sigma$ and any element $c \in \pi_1 (\S)$, we have $\tr(\rho(c)) \neq 0$ where $\rho$ is the holonomy representation. Indeed, if $c$ is a two-sided curve, then the $\rho(c)$ is a hyperbolic or parabolic isometry, and hence we have $|\tr(\rho(c))| \geq 2$. If $c$ is a one-sided curve, then $\rho(c)$ is a glide-reflection. A glide-reflection with zero trace corresponds to a plain reflection which is an involution. This would contradict the faithfulness of the holonomy representation.

			As $\wt{\mathcal{T}}\SM$ is connected, the signs of $\tr(AB)$ and $\tr(AB^{-1})$ are constant. And hence $\varepsilon_1$ and $\varepsilon_2$ only depend on the geometric type of $\alpha$, $\beta$ and $\gamma$.

		\subsubsection{One non-simple closed curve}
			Let $\alpha = \{ c \}$ with $c$ a non-simple closed curve having an auto-intersection at the point $p$ in the interior of $\S$. We can see $c$ as an element of the fundamental group based at $p$. The curve $c$ can henceforth be written as $a \ast b$ with $a$ and $b$ be the two parts of the curve when removing the point $p$. This corresponds to one of the resolution, so set $\beta = \{a , b\}$. The other resolution of the intersection is the curve $\gamma = a \ast b^{-1}$ which has at least one self-intersection less than $c$. A simple permutation of the terms of the preceding case gives :
			$$\lambda(\alpha) = \varepsilon_1 \lambda(\beta) + \varepsilon_2 \lambda(\gamma).$$

		\subsubsection{One arc and one curve}\label{subsub:onearconecurve}
			Let $\alpha = \{a , b\} $ with $a$ an arc and $b$ a closed curve intersecting each other at $p \in \S$. The curve $b$ corresponds to an element $\mathbf{b} \in \pi_1 (\S)$. We lift everything in the universal cover $\Hy$. 

			$\bullet$ First assume that $b$ is a two-sided curve. Up to conjugacy and rescaling the isometry $\rho (\mathbf{b})$ is given by the following matrix :
			$$B = \rho (\mathbf{b}) = \left( \begin{array}{cc} \eta & 0 \\ 0 & \frac{1}{\eta} \end{array} \right)$$
			with $\eta > 1$ so that $$\lambda (b) = |\tr (\rho (\mathbf{b}) )| = \eta + \frac{1}{\eta}.$$ The axis of such an isometry is the vertical axis $x=0$ and the direction is given by the positive direction in $y$.

			Let $\wt{p}$ be a lift of $p$ on the axis $x=0$, and let $\wt{a}$ be the unique decorated geodesic that is a lift of $a$ passing through $\wt{p}$. We denote by $U = (u,h)$ and $V = (v,k)$ the horocycles defining this decorated geodesic. The geodesic crosses the vertical axis $x=0$ and hence $u$ and $v$ will be disjoint from $0$ and $\infty$ and without loss of generality we can choose $u < 0$ and $v > 0$, so that we have~:
			$$ \lambda (a) = \dfrac{v - u}{\sqrt{hk}}.$$

			The image of the horocycles $U$ and $V$ under the isometry $B = \rho (\mathbf{b})$ are given by
			$$B (U) = (\eta^2 u , \eta^2 h), \quad B (V) = (\eta^2 v , \eta^2 k).$$
			It is easy to check that the $\lambda$-length of the decorated geodesic $(B (U) , B (V))$ is still $\lambda (a)$. Let $\wt{e}$ and $\wt{f}$ be the decorated geodesics corresponding to $(U , B(V))$ and $(B(U) , V)$ respectively. These geodesics correspond to arcs $e$ and $f$ on $\S$. We have
			$$\lambda (\wt{e}) = \dfrac{\eta^2 v - u}{\eta \sqrt{hk}}, \quad \lambda (\wt{f}) = \dfrac{v-\eta^2 u}{\eta \sqrt{hk}}. $$
			These arcs correspond to the resolution of the intersection at $p$, hence we can note $\beta = e$ and $\gamma = f$. We then have the following relation :
			\begin{align*}
			\lambda(\alpha) = \lambda (a) \lambda (b) & = \dfrac{v - u}{\sqrt{hk}} \left( \eta + \dfrac{1}{\eta} \right) \\
			& = \dfrac{\eta v }{\sqrt{hk}} + \dfrac{ v }{\eta \sqrt{hk}} - \dfrac{\eta u }{\sqrt{hk}} - \dfrac{u }{\eta \sqrt{hk}}\\
			& = \dfrac{\eta^2 v - u}{\eta \sqrt{hk}} + \dfrac{v-\eta^2 u}{\eta \sqrt{hk}} \\
			& = \lambda (\beta) + \lambda (\gamma). 
			\end{align*}

			$\bullet$ Now, if $b$ is a one-sided curve. Up to conjugacy the isometry $\rho (\mathbf{b})$ is given by the following matrix 
			$$\rho (\mathbf{b}) = \left( \begin{array}{cc} \eta & 0 \\ 0 & - \frac{1}{\eta} \end{array} \right)$$
			with $\eta > 1$ so that $$\lambda (b) = \eta - \frac{1}{\eta}.$$
			Again, let $\wt{p}$ be a lift of $p$ on the axis $x=0$, and let $\wt{a}$ be the unique decorated geodesic that is a lift of $a$ passing through $\wt{p}$. We denote by $U = (u,h)$ and $V = (v,k)$ the horocycles defining this decorated geodesic with $u < 0$ and $v > 0$, so that we have~:
			$$ \lambda (a) = \dfrac{v - u}{\sqrt{hk}}.$$
			The image of the horocycles $U$ and $V$ under the isometry $B = \rho (\mathbf{b})$ are given by
			$$B (U) = (- \eta^2 u , \eta^2 h), \quad  B (V) = (- \eta^2 v , \eta^2 k).$$
			
			Let $\wt{e}$ and $\wt{f}$ be the decorated geodesics corresponding to $(U , B(V))$ and $(B(U) , V)$ respectively. These geodesics correspond to arcs $e$ and $f$ on $\S$. We have
			$$\lambda (\wt{e}) = \dfrac{ u + \eta^2 v}{\eta \sqrt{hk}}, \quad \lambda (\wt{f}) = - \dfrac{v+\eta^2 u}{\eta \sqrt{hk}} .$$
			So finally we have the relation :
			\begin{align*}
			\lambda (\alpha) = \lambda (a ) \lambda (b) & = \dfrac{v - u}{\sqrt{hk}} \left( \eta - \dfrac{1}{\eta} \right) \\
			& = \dfrac{\eta^2 v + u}{\eta \sqrt{hk}} - \dfrac{v+\eta^2 u}{\eta \sqrt{hk}} \\
			& = \lambda (\beta) + \lambda (\gamma) .
			\end{align*}

		\subsubsection{One non-simple arc}
			Let $\alpha = a$ with $a$ a non-simple arc with a self-intersection at a point $p \in \S$. Then we can define a closed curve $b$ based at the point $p$ which correspond to the loop created by $a$. Let $\mathbf{b}$ be the corresponding element of $\pi_1 (\S , p)$.

			$\bullet$ If $b$ is two-sided, then up to conjugacy we have
			$$B = \rho (\mathbf{b}) = \left( \begin{array}{cc} \eta & 0 \\ 0 & \frac{1}{\eta} \end{array} \right).$$

			Let $\wt{a}$ be the decorated geodesic corresponding to a lift of the arc and let $U = (u,h)$ and $V = (v,k)$ be two horocycles such that $\wt{a} = (U , B(V) )$ and choose $v>u$. In this setting we have necessarily that the decorated geodesic $\wt{a}_{-}$ corresponding to $(B^{-1}(U) , V)$ intersect the geodesic $\wt{a}$ at a point $\wt{p}_{-}$ and similarly the geodesic $\wt{a}_{+}$ intersect $\wt{a}$ at $\wt{p}_{+}$.

			This implies that the geodesic $\wt{a}$ does not cross the vertical axis $x=0$ and hence $u$ and $v$ are of the same sign. Without loss of generality, we may assume that $u, v >0$.

			Define $\wt{c}_-$ to be the decorated geodesic $(U , V)$ and $\wt{c}_+$ to be the decorated geodesic $(B(U) , B(V))$. Clearly, these two geodesics are lifts of the same arc $c$ in $\S$. Define also $\wt{d}$ to be the decorated geodesic $(V , B(U))$. So we have~:
			$$\lambda(a) = \dfrac{\eta^2 v - u}{\eta \sqrt{hk}} , \hspace{3pt} \lambda(c) = \dfrac{ v - u}{\sqrt{hk}}, \hspace{3pt} \lambda(c) = \dfrac{ v - \eta^2 u }{\eta \sqrt{hk}}, \hspace{3pt} \lambda (b) = \eta + \dfrac{1}{\eta}$$

			The resolutions at point $p$ are given by the multigeodesic $\beta = b \sqcup c$ and $\gamma = d$. Calculations similar to the preceding case show that
			$$\lambda(\alpha) = \lambda(b) \lambda(c) + \lambda (d) = \lambda(\beta) + \lambda(\gamma)$$.

			\medskip

			$\bullet$ If $b$ is one-sided, then up to conjugacy we have
			$$B = \rho (\mathbf{b}) = \left( \begin{array}{cc} \eta & 0 \\ 0 & - \frac{1}{\eta} \end{array} \right)$$

			Let $\wt{a}$ be the decorated geodesic corresponding to a lift of the arc and let $U = (u,h)$ and $V = (v,k)$ be two horocycles such that $\wt{a} = (U , B(V) )$ and choose $v>u$. In this setting we have necessarily that the decorated geodesic $\wt{a}_{-}$ corresponding to $(B^{-1}(U) , V)$ intersect the geodesic $\wt{a}$ at a point $\wt{p}_{-}$ and similarly the geodesic $\wt{a}_{+}$ intersect $\wt{a}$ at $\wt{p}_{+}$.

			This implies that the geodesic $\wt{a}$ cross the vertical axis $x=0$ and hence $u$ and $B(v)$ are of different sign. As $B$ is orientation reversing, we have that $v$ and $B(v)$ are of different sign, and hence without loss of generality, we may assume that $v > u >0 $.

			Define $\wt{c}_-$ to be the decorated geodesic $(U , V)$ and $\wt{c}_+$ to be the decorated geodesic $(B(U) , B(V))$. Clearly, these two geodesics are lifts of the same arc $c$ in $\S$. Define also $\wt{d}$ to be the decorated geodesic $(V , B(U))$. So we have 
			$$\lambda(a) = \dfrac{u + \eta^2 v}{\eta \sqrt{hk}} , \hspace{3pt} \lambda(c) = \dfrac{ v - u}{\sqrt{hk}}, \hspace{3pt} \lambda(d) = \dfrac{ v + \eta^2 u }{\eta \sqrt{hk}}, \hspace{3pt} \lambda (b) = \eta - \dfrac{1}{\eta}$$

			Again, the resolutions at point $p$ are given by the multigeodesic $\beta = \left\{ b , c\right\}$ and $\gamma = d$. Calculations similar to the preceding case show that
			$$\lambda(\alpha) = \lambda(b) \lambda(c) + \lambda (d) = \lambda (\beta) + \lambda(\gamma).$$
	\end{proof}

	\begin{rmq}
		The coefficients $\varepsilon_1$ and $\varepsilon_2$ are always $+1$ except in the case where the crossing involves only  closed curves and no arcs. In this case, the coefficients cannot be both negative at the same time because the left term of the identity is necessarily positive. The computation of the coefficient for a given situation can be done by taking one example of a hyperbolic structure and computing the traces. By continuity and connexity argument, the value for one example will be the value for all Teichm\"uller space.

		For example if $\alpha = \{a , b\}$ with $a$ and $b$ two simple closed two-sided curves that intersect only once, then $\varepsilon_1 = \varepsilon_2 = +1$. This is proved using the fact that the commutator $a \ast b \ast a^{-1} \ast b^{-1}$ bounds a one-holed torus embedded in $\S$. Explicit examples of hyperbolic structure on a one-holed torus are classical and, using one particular hyperbolic structure, we see that the signs are all positive. 
	\end{rmq}

	The case of a multigeodesic consisting of a unique one-sided curve with multiplicity more than one, has to be treated separately. Indeed, any two curves homotopic to a one-sided curve will have at least one intersection point. Recall that in the orientable case, two homotopic two-sided curves can always be made disjoint.
	\begin{prop}\label{prop:d2} 
		Let $d$ be a one-sided closed curve corresponding to an element $\mathbf{d} \in \pi_1 (\S)$. Then
		$$\lambda (d)^2 = \lambda(e) - 2$$
		where $e$ is the two-sided closed curve corresponding to the element $\mathbf{d}^2 \in \pi_1 (\S)$.
	\end{prop}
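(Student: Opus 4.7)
The plan is to deduce the formula from the trace identity $\lambda(c) = |\tr \rho_\sigma(\mathbf{c})|$ established at the end of Section~1 for closed curves, combined with the multiplicative definition $\lambda(\alpha) = \lambda(d)^2$ of the $\lambda$-length of a multigeodesic. These together reduce the problem to expressing $|\tr \rho(\mathbf{d}^2)|$ in terms of $|\tr \rho(\mathbf{d})|$.

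The key algebraic input is the Cayley--Hamilton identity for $2\times 2$ matrices: for any $A$ one has
\[ \tr(A^2) = (\tr A)^2 - 2 \det A. \]
I would apply this to a matrix representative $A = \rho(\mathbf{d}) \in \mathrm{GL}(2,\R)$ normalised so that $|\det A| = 1$, as is done throughout the proof of Theorem~\ref{theorem:resolution}. Because $d$ is a one-sided simple closed curve, its holonomy is an orientation-reversing isometry, and hence $\det A = -1$. Substituting gives
\[ \tr\bigl(\rho(\mathbf{d})^2\bigr) = (\tr \rho(\mathbf{d}))^2 + 2 \geq 2 > 0, \]
so in particular the absolute value may be dropped on the left.

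Since $\rho$ is a group homomorphism, $\rho(\mathbf{d}^2) = \rho(\mathbf{d})^2$, and combining everything yields
\[ \lambda(e) = (\tr \rho(\mathbf{d}))^2 + 2 = \lambda(d)^2 + 2 = \lambda(\alpha) + 2, \]
which is the claimed identity. There is no substantial obstacle in this argument; the main point to be careful about is tracking signs of determinants and traces when passing between matrix representatives in $\mathrm{GL}(2,\R)$ and the isometry group $\pgl$, a subtlety the preceding material has already handled cleanly. The fact that $e = \mathbf{d}^2$ is two-sided, implicit in the statement, is a direct consequence of $\rho(\mathbf{d})^2$ having positive determinant and therefore lying in $\psl$.
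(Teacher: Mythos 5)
Your proof is correct and takes essentially the same approach as the paper's: both reduce via the multigeodesic definition and the trace--$\lambda$-length identity for closed curves to computing $\tr(D^2)$ in terms of $\tr(D)$. The paper does this by explicitly diagonalizing $D$ as $\mathrm{diag}(\mu,-1/\mu)$ with $\mu>1$, whereas you invoke the Cayley--Hamilton identity $\tr(A^2)=(\tr A)^2-2\det A$ together with $\det A=-1$ for an orientation-reversing holonomy, a slightly cleaner, coordinate-free way to reach the same algebraic identity.
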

	\begin{proof}
		Let $\wt{\sigma} \in \wt{\mathcal{T}}\SM$ and let $D = \rho_\sigma (\mathbf{d})$. Up to conjugacy and rescaling, the matrix $D$ is given by
		$$D = \left( \begin{array}{cc} \mu & 0 \\ 0 & -1/\mu\end{array} \right) , \mbox{ with } \mu > 1$$
		The $\lambda$-length of the multigeodesic $\alpha = \{d,d \}$ is given by
		$$\lambda(\alpha) = \lambda (d)^2 = \tr(D)^2$$
		On the other hand, the $\lambda$-length of $e$ given by $\lambda(e) = |\tr (\rho(\mathbf{d}^2))| = |\tr (\mathbf{d}^2) |$ and hence
		$$\lambda (e) = |\tr (D^2)| = |\mu^2 + \dfrac{1}{\mu^2}| = \left(\mu - \dfrac{1}{\mu}\right)2 + 2 = \lambda(d)^2 +2 = \lambda(\alpha) + 2.$$
	\end{proof}

	\begin{rmq}\label{rmq:d2}
		Slightly abusing notations, we can restate Proposition \ref{prop:d2} by saying that 
		$$\lambda(d^2) = \lambda(d)^2 +2$$
		for any one-sided closed curve $d$ in $\SM$, see Figure \ref{fig:detd2}. This identification will be of particular use in the proof of Theorem \ref{theorem:basis}.
	\end{rmq}

	\begin{figure}
		\begin{center}
			\begin{tikzpicture}[scale = .75]
				\arcd 0 0
				\mobdeux 0 0
				\fill[red] (0,-.75) node {$d$};

				\draw[red] (4,0) circle (.5);
				\mobdeux 4 0
				\fill[red] (4,-.75) node {$d^2$};

				\fill (2,-3) node {$\lambda(d^2) = \lambda(d)^2 + 2$};
		
			\end{tikzpicture}
		\end{center}
		\caption{Relations between $\lambda(d)$ and $\lambda(d^2)$ for the M\"obius strip with two marked points.}\label{fig:detd2}
	\end{figure}
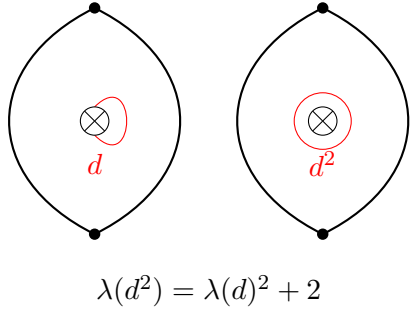

\section{Quasi-cluster algebras associated with non-orientable surfaces}
	In this section $\SM$ is an unpunctured marked surface of rank $n \geq 1$ with $b \geq 1$ boundary segments and $\mathcal F$ is a field of rational functions in $n+b$ indeterminates. To any boundary segment $b$ in $\B\SM$ we associate a variable $x_b \in \mathcal F$ such that $\ens{x_b \ | \ b \in \B\SM}$ is algebraically independent in $\mathcal F$ and we set 
	$$\Z\P = \Z[x_b^{\pm 1} \ | b \in \B\SM] \subset \mathcal F,$$
	which is referred to as the \emph{ground ring}.

	\subsection{Quasi-seeds and their mutations}
		\begin{defi}
			A \emph{quasi-seed} associated with $\SM$ in $\mathcal F$ is a pair $\Sigma = (T,\x)$ such that~:
			\begin{enumerate}
				\item $T$ is a quasi-triangulation of $\SM$~;
				\item $\x = \ens{x_{t} \ | \ t \in T}$ is a free generating set of the field $\mathcal F$ over $\Z\P$. 
			\end{enumerate}
			The set $\ens{x_{t} \ | \ t \in T}$ is called the \emph{quasi-cluster} of the quasi-seed $\Sigma$.

			A quasi-seed is called a \emph{seed} if the corresponding quasi-triangulation is a triangulation and in this case the quasi-cluster is called a \emph{cluster}.
		\end{defi}

		\begin{defi}\label{defi:qmut}
			Given $t \in T$, we define the \emph{quasi-mutation of $\Sigma$ in the direction $T$} as the pair $\mu_t(T,\x) = (T',\x')$ where $T' = \mu_t(T) = T \setminus \ens{t} \sqcup \ens{t'}$ and $\x'=\ens{x_v \ | \ v \in T'}$ such that $x_{t'}$ is defined as follows~:
			\begin{enumerate}
				\item If $t$ is an arc separating two different triangles with sides $(a,b,t)$ and $(c,d,t)$ as in the figure below,
					\begin{center}
						\begin{tikzpicture}[scale = .5]
							\draw (.5,.23) .. controls (2,1) and (6,1) .. (7.5,.23);
							\draw (.5,3.77) .. controls (2,3) and (6,3) .. (7.5,3.77);

							\draw (.5,.23) .. controls (1,1) and (1,3) .. (.5,3.77);	
							\draw (7.5,.23) .. controls (7,1) and (7,3) .. (7.5,3.77);	

							\draw[red] (.5,.23) -- (7.5,3.77);

							\fill (.5,.23) circle (.1);
							\fill (.5,3.77) circle (.1);
							\fill (7.5,3.77) circle (.1);
							\fill (7.5,.23) circle (.1);

							\fill (.5,2) node {$a$};
							\fill (7.5,2) node {$c$};
							\fill (4,3.5) node {$b$};
							\fill (4,.5) node {$d$};

							\fill[red] (2,1) node [above] {$t$};
						

							\draw[->] (8.5,2) -- (11.5,2);
							\fill (10,2) node [above] {$\mu_t$};

						
							\draw (12.5,.23) .. controls (14,1) and (18,1) .. (19.5,.23);
							\draw (12.5,3.77) .. controls (14,3) and (18,3) .. (19.5,3.77);

							\draw (12.5,.23) .. controls (13,1) and (13,3) .. (12.5,3.77);	
							\draw (19.5,.23) .. controls (19,1) and (19,3) .. (19.5,3.77);	

							\draw[red] (12.5,3.77) -- (19.5,.23);

							\fill (12.5,.23) circle (.1);
							\fill (12.5,3.77) circle (.1);
							\fill (19.5,3.77) circle (.1);
							\fill (19.5,.23) circle (.1);

							\fill (12.5,2) node {$a$};
							\fill (19.5,2) node {$c$};
							\fill (16,3.5) node {$b$};
							\fill (16,.5) node {$d$};

							\fill[red] (18,1) node [above] {$t'$};
						\end{tikzpicture}
					\end{center}
					then the relation is simply given by the Ptolemy relation for arcs, that is,
					$$x_t x_{t'}= x_a x_c + x_b x_d.$$
				\item If $t$ is an arc in an anti-self-folded triangle with sides $(t,t,a)$, as in the figure below,
					\begin{center}
						\begin{tikzpicture}[scale = .5]

							\arcaone 0 0 
							\mobone 0 0
							\fill[red] (1,0) node {$t$};
							
							\fill (-1.8,0) node [left] {$a$};
						

							\draw[->] (3.5,0) -- (6.5,0);
							\fill (5,0) node [above] {$\mu_t$};

							
							\arcd {10} 0
							\mobone {10} 0
							\fill[red] (11,0) node [] {$t'$};
						
							\fill (11.8,0) node [right] {$a$};

						\end{tikzpicture}
					\end{center}
					then the relation is
					$$x_t x_{t'} = x_a.$$
				\item If $t$ is a one-sided curve in an annuli with boundary $a$ as in the figure below, 
					\begin{center}
						\begin{tikzpicture}[scale = .5]

							\arcd {0} 0
							\mobone 0 0
							\fill[red] (1,0) node [] {$t$};
						
							\fill (-1.8,0) node [left] {$a$};


							\draw[->] (3.5,0) -- (6.5,0);
							\fill (5,0) node [above] {$\mu_t$};

							
							\arcaone {10} 0
							\mobone {10} 0
							\fill[red] (11,0) node {$t'$};
						
							\fill (11.8,0) node [right] {$a$};

						\end{tikzpicture}
					\end{center}
					then the relation is 
					$$x_t x_{t'} = x_a.$$
				\item If $t$ is an arc separating a triangle with sides $(a,b,t)$ and an annuli with boundary $t$ and one-sided curve $d$ as in the figure below,
					\def\arcdnoir#1#2{
						\draw (#1,#2+.25) .. controls (#1+.75,#2+1) and (#1+.75,#2-1) .. (#1,#2-.25);
					}

					\begin{center}
						\begin{tikzpicture}[scale = .75]

							\arcdnoir {0} 0
							\mobdeux 0 0
							\arcca 0 0
							\fill[red] (0,1) node [] {$t$};

							\fill (-2,0) node {$a$};
							\fill (2,0) node {$b$};

							\fill (.25,-.75) node {$d$};
						

							\draw[->] (3,0) -- (5,0);
							\fill (4,0) node [above] {$\mu_t$};

							
							\arcdnoir {8} 0
							\mobdeux {8} 0
							\arccb {8} 0
							\fill[red] (8,-1) node [] {$t'$};

							\fill (8-2,0) node {$a$};
							\fill (8+2,0) node {$b$};

							\fill (8.25,.75) node {$d$};

						\end{tikzpicture}
					\end{center}
					then the relation is
					$$x_t x_{t'} = (x_a + x_b)^2 + x_d^2 x_a x_b.$$
			\end{enumerate}
		\end{defi}
		Note that the quasi-mutation of a quasi-seed is again a quasi-seed.

		Two quasi-seeds $\Sigma=(T,\x)$ and $\Sigma'=(T',\x')$ associated with $\SM$ in $\mathcal F$ are called \emph{quasi-mutation-equivalent} if $\Sigma'$ can be obtained from $\Sigma$ by a finite number of quasi-mutations. This defines an equivalence relation on the set of seeds associated with $\SM$ in $\mathcal F$ whose equivalence classes are called \emph{quasi-mutation classes}.

		Since $\SM$ has rank $n$, every quasi-triangulation $T$ in $\SM$ has $n$ elements. We can thus fix a labelling $t_1, \ldots, t_n$ of the elements of $T$. A quasi-seed $\Sigma$ equipped with such a labelling is called a \emph{labelled quasi-seed}. For any $1 \leq i \leq n$, we define the \emph{mutation in the direction $i$} of the labelled quasi-seed $\Sigma$ as $\mu_i(\Sigma) = \mu_{t_i}(\Sigma)=(T',\x')$ equipped with the labelling $T' = \ens{t_1', \ldots, t_n'}$ where $t_k'=t_k$ if $k \neq i$ and $t_i'$ is the quasi-flip of $t_i$ with respect to $T$. Note that mutations of labelled quasi-seeds are involutive in the sense that $\mu_i(\mu_i(\Sigma)) = \Sigma$ for any $1 \leq i \leq n$.

	\subsection{Quasi-cluster algebras}
		Let $\mathbb T_n$ denote the $n$-regular tree. At each vertex in $\mathbb T_n$, we label by $\ens{1, \ldots, n}$ the $n$ adjacent edges. 

		\begin{defi}
			A \emph{quasi-cluster pattern} associated with $\SM$ in $\mathcal F$ is an assignment $\mathcal X: v \mapsto \Sigma_v$ for each vertex $v$ of $\mathbb T_n$ where $\Sigma_v=(T_v,\x_v)$ is a labelled quasi-seed associated with $\SM$ in $\mathcal F$ and where two adjacent quasi-seeds in $\mathbb T_n$ are related by a single mutation in the sense that 
			$$\xymatrix{\Sigma_v \ar@{-}[r]^k & \Sigma_{v'}} \text{ in } \mathbb T_n \Iff \Sigma_{v'} = \mu_k(\Sigma_v).$$
		\end{defi}

		\begin{defi}
			Let $\mathcal X : v \mapsto \Sigma_v$ be a quasi-cluster pattern associated with $\SM$ in $\mathcal F$. The \emph{quasi-cluster algebra associated with $\mathcal X$} is the $\Z\P$-subalgebra $\mathcal A(\mathcal X)$ of $\mathcal F$ generated by the union of all the quasi-clusters of quasi-seeds appearing in the quasi-cluster pattern, that is, 
			$$\mathcal A(\mathcal X) = \Z\P\left[x \ | \ x \in \bigcup_{v} \x_v\right]$$
			where $v$ runs over the vertices in $\mathbb T_n$.

			The elements in the union of all the quasi-clusters of quasi-seeds appearing in the quasi-cluster pattern are called the \emph{quasi-cluster variables} of the quasi-cluster algebra $\mathcal A(\mathcal X)$.
		\end{defi}

		Note that each labelled quasi-seed $\Sigma$ associated with $\SM$ in $\mathcal F$ determines entirely a quasi-cluster pattern $\mathcal X$ (up to a relabelling of the vertices in $\mathbb T_n$) so that the quasi-cluster algebra $\mathcal A(\mathcal X)$ is entirely determined by $\Sigma$ and is denoted by $\mathcal A_{\Sigma}$. Note also that different choices of labelling of a quasi-seed $\Sigma$ associated with $\SM$ in $\mathcal F$ give rise to canonically isomorphic quasi-cluster algebras so that we can associate a quasi-cluster algebra $\mathcal A_\Sigma$ to any quasi-seed $\Sigma$ associated with $\SM$ in $\mathcal F$.

		Finally, note that if $\Sigma=(T,\x)$ and $\Sigma'=(T',\x')$ are two quasi-seeds associated with $\SM$ in $\mathcal F$, then the quasi-triangulations $T'$ and $T$ are quasi-mutation-equivalent so that there exists a seed $\Sigma'' = (T',\x'')$ in the quasi-cluster pattern defined by $\Sigma$ and the canonical automorphism of $\mathcal F$ sending $\x''$ to $\x'$ induces an isomorphism of the quasi-cluster algebras $\mathcal A_\Sigma$ and $\mathcal A_{\Sigma'}$. Thus, up to a canonical ring isomorphism, the quasi-cluster algebra $\mathcal A_\Sigma$ only depends on the surface $\SM$ and is denoted by $\mathcal A_{\SM}$.

		\begin{defi}
			Let $\SM$ be a non-oriented unpunctured marked surface, then $\mathcal A_{\SM}$ is called the \emph{quasi-cluster algebra associated with the surface $\SM$}.
		\end{defi}

		Note that the quasi-cluster of any quasi-seed $\Sigma=(T,\x)$ in $\mathcal A_{\SM}$ is a free generating set of $\mathcal F$ over $\Z\P$ so that each quasi-cluster variable $x$ in $\mathcal A_{\SM}$ can be expressed as a rational function with coefficients in $\Z\P$ in the quasi-cluster $\x$. This rational expression is called the \emph{$\Sigma$-expansion} of $x$ in $\mathcal A_{\SM}$.

		It follows from the definition of the quasi-cluster algebra $\mathcal A_{\SM}$ that each quasi-cluster variable $x$ in $\mathcal A_{\SM}$ is associated with a quasi-arc in $\mathcal A_{\SM}$. If $\Sigma=(T,\x)$ is a quasi-seed in $\mathcal A_{\SM}$, we saw in Theorem \ref{theorem:lambdaindalg} that the $\lambda$-lengths of arcs in $T$ can be viewed as algebraically independent variables. Therefore, there is an isomorphism of $\Z$-algebras~:
		$$\phi_T: \left\{\begin{array}{rcl}
			\Q(\lambda(t) \ | \ t \in T \sqcup \B\SM) & \xrightarrow{\sim} & \mathcal F\\
			\lambda(t) & \longmapsto & x_t \text{ for any } t \in T \sqcup \B\SM
		\end{array}\right.$$ 

		\begin{lem}\label{lem:lambdax}
			Let $\SM$ be an unpunctured marked surface, let $\Sigma=(T,\x)$ be a quasi-seed in $\mathcal A_{\SM}$ and let $x$ be a quasi-cluster variable in $\mathcal A_{\SM}$ corresponding to a quasi-arc $v$ in $\qA\SM$. Then the $\Sigma$-expansion of $x$ is given by $\phi_T(\lambda(v))$.
		\end{lem}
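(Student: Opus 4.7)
The plan is to argue by induction on $n(T,v)$, the minimum distance in the quasi-exchange graph $\qE\SM$ between $T$ and a quasi-triangulation that contains the quasi-arc $v$. When $n(T,v)=0$ the quasi-arc $v$ already lies in $T$, and by construction $\phi_T$ sends $\lambda(v)$ to $x_v$, so there is nothing to prove.

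For the inductive step, assume $n(T,v)\geq 1$ and that the conclusion holds for every quasi-seed at strictly smaller distance. I would choose $t \in T$ such that $T':=\mu_t(T)$ realises $n(T',v)=n(T,v)-1$, and write $t^*:=\mu_t(t)$ for the quasi-flip, so that $T'=(T\setminus\{t\})\cup\{t^*\}$. Let $\Sigma':=\mu_t(\Sigma)=(T',\x')$ with $\x'=(\x\setminus\{x_t\})\cup\{x_{t^*}\}$, where $x_{t^*}$ is defined by the appropriate case of Definition \ref{defi:qmut}. Because $\Sigma$ and $\Sigma'$ lie in the same quasi-cluster pattern, the quasi-cluster variable $x_v\in\mathcal F$ attached to $v$ is the same whether computed from $\Sigma$ or $\Sigma'$, and the inductive hypothesis applied to $\Sigma'$ gives $x_v=\phi_{T'}(\lambda(v))$. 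Since $\phi_T$ and $\phi_{T'}$ agree on $\lambda(u)$ for each $u\in (T\cap T')\sqcup\B\SM$, in order to identify $\phi_{T'}(\lambda(v))$ with $\phi_T(\lambda(v))$ it will be enough to check the compatibility $\phi_T(\lambda(t^*))=x_{t^*}$, that is, to verify that each of the four algebraic quasi-mutation relations in Definition \ref{defi:qmut} is the $\phi_T$-image of a genuine identity of $\lambda$-length functions on $\wt{\mathcal T}\SM$.

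Case (1), the standard Ptolemy identity $\lambda(t)\lambda(t^*)=\lambda(a)\lambda(c)+\lambda(b)\lambda(d)$, is already established in the ``two distinct arcs'' portion of the proof of Theorem \ref{theorem:resolution}. Cases (2) and (3) both reduce to the single identity $\lambda(t)\lambda(t^*)=\lambda(a)$ for the anti-self-folded M\"obius configuration, which is exactly what the calculation $|\tr(D)|=c/a$ in the proof of Proposition \ref{prop:anti-self} provides (after identifying $t^*$ with the core one-sided curve and recalling that its $\lambda$-length is the absolute value of the trace of the orientation-reversing holonomy).

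The main obstacle will be case (4), corresponding to Penner's quasi-flip between a triangle and an annulus enclosing a one-sided curve $d$: one must show the non-trivial identity $\lambda(t)\lambda(t^*)=(\lambda(a)+\lambda(b))^2+\lambda(d)^2\lambda(a)\lambda(b)$. I would attack it by lifting the local configuration to the upper half-plane $\Hy$, placing the axis of the hyperbolic isometry $\rho(\mathbf{d})^2$ along the imaginary axis and the horocycles at the endpoints of $a$ and $b$ in explicit coordinates as in Proposition \ref{prop:anti-self} and the closing cases of the proof of Theorem \ref{theorem:resolution}, and then computing $\lambda(t^*)$ directly. A second, more conceptual route is to resolve the (multiple) intersections between $t$ and $t^*$ iteratively via Theorem \ref{theorem:resolution}, using the squaring identity $\lambda(d^2)=\lambda(d)^2+2$ of Proposition \ref{prop:d2} to absorb the double traversal of the one-sided curve, and determining the signs by the same continuity/connectedness argument used there. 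Either way, once this last identity is in hand the induction closes and the lemma is established.
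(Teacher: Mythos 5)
Your proposal is correct and follows essentially the same strategy as the paper's proof: induct on distance in the quasi-exchange graph, reduce the inductive step to verifying that each of the four quasi-mutation rules of Definition \ref{defi:qmut} is the $\phi_T$-image of a genuine $\lambda$-length identity, and deduce these identities from Theorem \ref{theorem:resolution}, Proposition \ref{prop:anti-self}, and Proposition \ref{prop:d2}. You in fact supply more detail than the paper on the verification of cases (2)--(4), but the underlying argument coincides.
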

		\begin{proof}
			Let $\Sigma'=(T',\x')$ be a quasi-seed in $\mathcal A_{\SM}$ which is quasi-mutation-equivalent to $\Sigma$. We prove by induction on the minimal number $d(\Sigma,\Sigma')$ of quasi-mutations to reach $\Sigma'$ from $\Sigma'$ that the result holds for any quasi-cluster variable in $\Sigma'$. If $\Sigma=\Sigma'$, then the result clearly holds. 

			Otherwise, we can write $\Sigma' = \mu_v(\Sigma'')$ with $d(\Sigma,\Sigma'')<d(\Sigma,\Sigma')$. Therefore, the result holds for any quasi-cluster variable in $\Sigma''$ by induction hypothesis. Let denote by $v'$ the quasi-flip of $v$ with respect to the quasi-triangulation $T''$. The quasi-mutation rules precisely imitate the relations for the $\lambda$-lengths of the corresponding arcs. This is clear for the first three cases considered in Definition \ref{defi:qmut} and for the fourth case, it follows from the resolution of the two intersections of the corresponding arcs and from the identity given in Proposition \ref{prop:d2}. As $x_v x_{v'} = M_1 + M_2$ where $M_1$ and $M_2$ are monomials in the variables corresponding to the quasi-arcs in $T''$, applying $\phi_T$ to the corresponding relation for $\lambda(v) \lambda(v')$ and using the induction hypothesis, we get $\phi_T(\lambda(v')) = \frac{M_1 + M_2}{x_v} =x_{v'}$.
		\end{proof}
 
		Therefore, quasi-cluster variables in $\mathcal A_{\SM}$ are indexed by elements of $\qA\SM$ and we denote by $\ens{x_a \ | \ a \in \qA\SM}$ the set of quasi-cluster variables in $\mathcal A_{\SM}$ so that 
		$$\mathcal A_{\SM} = \Z\P[x_a \ | \ a \in \qA\SM] \subset \mathcal F.$$
		By definition, the \emph{cluster variables} in $\mathcal A_{\SM}$ are the quasi-cluster variables in $\mathcal A_{\SM}$ corresponding to arcs in $\SM$. In other words, the cluster variables in $\mathcal A_{\SM}$ are the elements $x_a$ with $a \in \A\SM$. Note that using Lemma \ref{lem:lambdax}, we will usually identify quasi-cluster variables with $\lambda$-lengths of the corresponding quasi-arcs.

		\begin{exmp}
			In Figure \ref{fig:variablesM2}, we exhibit the quasi-variables in the quasi-cluster algebra $\mathcal A_{\mathcal M_2}$ expressed in a particular quasi-cluster which does not correspond to a triangulation. For simplicity, for any quasi-arc $v$ in $\mathcal M_2$, we designated the quasi-cluster variable $x_v$ by $v$.
			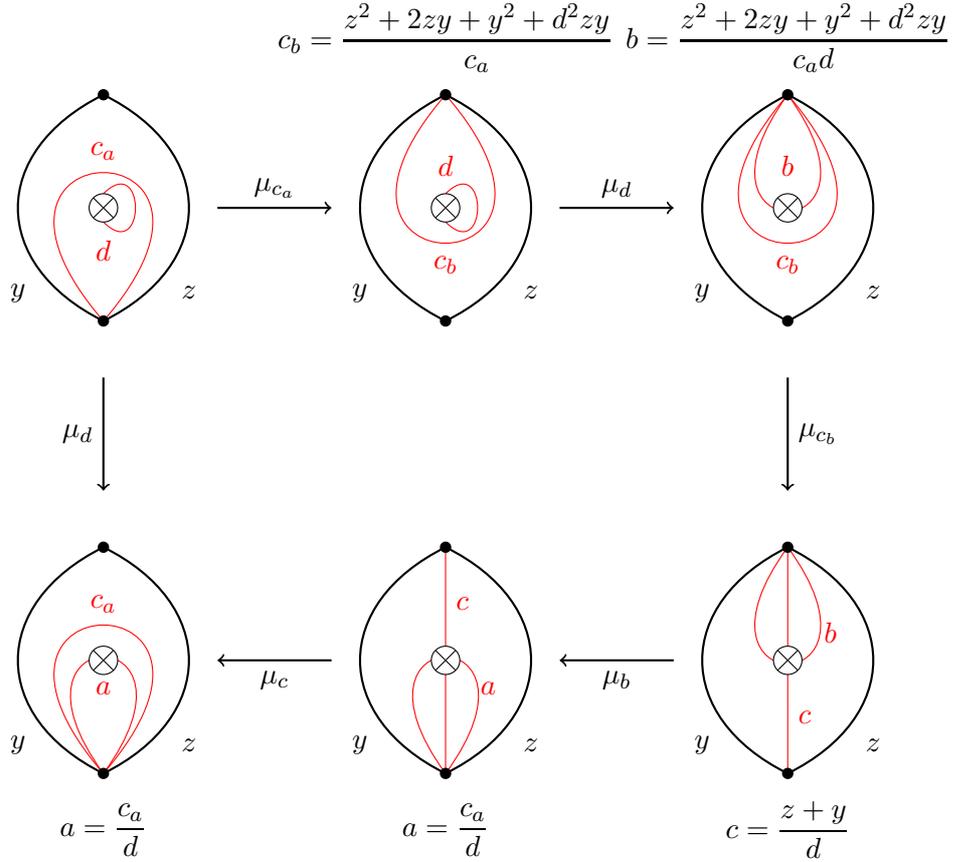
\begin{figure}
				\begin{center}
					\begin{tikzpicture}[scale = .75]
				%
				%

						\arcca {-6} 0 
						\arcd {-6} 0 
						\mobdeux {-6} 0 

						\fill[red] (-6,-.75) node {$d$};
						\fill[red] (-6,1) node {$c_a$};

						\fill (-7.5,-1.5) node {$y$};
						\fill (-4.5,-1.5) node {$z$};

						\draw[thick,->] (-4,0) -- (-2,0);
						\fill (-3,0) node [above] {$\mu_{c_a}$};

						\arcd {0} 0 
						\arccb 0 0 
						\mobdeux 0 0 

						\fill[red] (0,.75) node {$d$};
						\fill[red] (0,-1) node {$c_b$};

						\fill (-1.5,-1.5) node {$y$};
						\fill (1.5,-1.5) node {$z$};

						\fill (0,3) node {$\displaystyle c_b = \frac{z^2+2zy+y^2+d^2zy}{c_a}$};

						\draw[thick,->] (2,0) -- (4,0);
						\fill (3,0) node [above] {$\mu_d$};

						\arcb {6} 0 
						\arccb {6} 0 
						\mobdeux {6} 0 

						\fill[red] (6,.75) node {$b$};
						\fill[red] (6,-1) node {$c_b$};

						\fill (6-1.5,-1.5) node {$y$};
						\fill (6+1.5,-1.5) node {$z$};

						\fill (6,3) node {$\displaystyle b = \frac{z^2+2zy+y^2+d^2zy}{c_ad}$};

						\draw[thick,->] (6,-3) -- (6,-5);
						\fill (6,-4) node [right] {$\mu_{c_b}$};

						\arcb {6} {-8} 
						\arcc {6} {-8} 
						\mobdeux {6} {-8}

						\fill[red] (6.75,-7.5) node {$b$};
						\fill[red] (6,-9) node [right] {$c$};

						\fill (6-1.5,-9.5) node {$y$};
						\fill (6+1.5,-9.5) node {$z$};

						\fill (6,-11) node {$\displaystyle c = \frac{z+y}{d}$};

						\draw[thick,->] (4,-8) -- (2,-8);
						\fill (3,-8) node [below] {$\mu_b$};

						\arca {0} {-8}
						\arcc {0} {-8}
						\mobdeux {0} {-8}

						\fill[red] (.75,-8.5) node {$a$};
						\fill[red] (0,-7) node [right] {$c$};

						\fill (-1.5,-9.5) node {$y$};
						\fill (1.5,-9.5) node {$z$};

						\fill (0,-11) node {$\displaystyle a = \frac{c_a}{d}$};


						\draw[thick,->] (-2,-8) -- (-4,-8);
						\fill (-3,-8) node [below] {$\mu_c$};

						\arca {-6} {-8} 
						\arcca {-6} {-8} 
						\mobdeux {-6} {-8}

						\fill[red] (-6,-8.5) node {$a$};
						\fill[red] (-6,-7) node {$c_a$};

						\fill (-7.5,-9.5) node {$y$};
						\fill (-4.5,-9.5) node {$z$};

						\fill (-6,-11) node {$\displaystyle a = \frac{c_a}{d}$};


						\draw[thick,->] (-6,-3) -- (-6,-5);
						\fill (-6,-4) node [left] {$\mu_d$};

					\end{tikzpicture}
				\end{center}
				\caption{The quasi-exchange graph of the M\"obius strip with two marked points and the corresponding quasi-cluster variables, expressed in the quasi-cluster $(c_a,d)$.}\label{fig:variablesM2}
			\end{figure}
		\end{exmp}

	\subsection{Orientable vs non-orientable}\label{ssection:orientable}
		If $\SM$ is orientable, Fomin, Shapiro and Thurston associated to $\SM$ a cluster algebra in \cite{FST:surfaces,FT:surfaces2}. When the ground ring of the cluster algebra is the group ring of the free abelian group generated by variables associated to the boundary segments of $\SM$, we say that this cluster algebra has \emph{coefficients associated with the boundary segments}.

		The following proposition follows directly from the definitions and from the geometric interpretation of the cluster algebras from surfaces provided in \cite{FT:surfaces2}~:
		\begin{prop}
			Assume that $\SM$ is an orientable unpunctured marked surface and fix an orientation of $\SM$. Then the quasi-cluster algebra $\mathcal A_{\SM}$ is the cluster algebra associated with $\SM$ with coefficients associated with the boundary segments. \hfill \qed
		\end{prop}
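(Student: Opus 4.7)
The proof is essentially an unwinding of definitions, and the plan is to show that when $\SM$ is orientable, the quasi-cluster algebra construction degenerates to the FST construction.

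First I would observe that if $\SM$ is orientable, then $\SM$ contains no one-sided simple closed curves, so $\qA\SM = \A\SM$. Consequently every quasi-triangulation is a triangulation, and the quasi-cluster complex $\Delta^{\otimes}\SM$ coincides with the cluster complex $\Delta\SM$. Since $\SM$ is also unpunctured and orientable, no triangulation of $\SM$ contains a self-folded or anti-self-folded triangle: such configurations require either a puncture or a Möbius neighborhood. Likewise no triangle of $T$ can be adjacent to an annulus coming from a one-sided curve. It follows that among the four cases in Definition \ref{defi:qmut}, only case (1) can ever occur for a quasi-seed associated with an orientable $\SM$.

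Next I would fix an orientation of $\SM$ and compare initial data. FST associate to a triangulation $T$ a signed adjacency matrix $B(T)$ whose principal part encodes the exchange quiver, together with coefficient rows corresponding to the boundary segments in $\B\SM$. By the geometric realisation in \cite{FT:surfaces2}, the cluster variables attached to arcs are the $\lambda$-lengths of those arcs, and a flip of $T$ along $t \in T$ induces the Ptolemy relation $x_t x_{t'} = x_a x_c + x_b x_d$ at the level of the FST cluster algebra, where $a,b,c,d$ are the sides of the quadrilateral formed by the two triangles adjacent to $t$. This is literally the exchange rule in case (1) of Definition \ref{defi:qmut}. Hence the mutation graph, the exchange relations, and the labelling by $\B\SM$-boundary variables all coincide on both sides.

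Therefore, starting from any quasi-seed $\Sigma = (T,\x)$ associated with $\SM$, the recursive generation of quasi-cluster variables in $\mathcal{A}_{\SM}$ by quasi-mutations matches, step by step, the recursive generation of cluster variables in the FST cluster algebra $\mathcal{A}^{\text{FST}}_{\SM}$ from the seed $(B(T),\x)$. Since the ground rings agree, both being $\Z[x_b^{\pm 1} \ | \ b \in \B\SM]$, the $\Z\P$-subalgebras of $\mathcal F$ generated by the two sets of variables are identical, and the isomorphism is the canonical one identifying the initial labelled clusters. The only genuine point to be careful about is that no case outside (1) of Definition \ref{defi:qmut} ever appears along any quasi-mutation sequence, which is the content of the first paragraph and is the main (though quite mild) obstacle.
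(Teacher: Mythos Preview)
Your proposal is correct and follows the same approach as the paper, which simply records that the proposition ``follows directly from the definitions and from the geometric interpretation of the cluster algebras from surfaces provided in \cite{FT:surfaces2}'' and omits any further argument. Your write-up is a careful unwinding of exactly this: orientability forces $\qA\SM=\A\SM$, rules out cases (2)--(4) of Definition~\ref{defi:qmut}, and leaves only the Ptolemy exchange, which is the FST relation.
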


\section{Quasi-cluster algebras and double covers}
	We saw in Section \ref{ssection:orientable} that if $\SM$ is orientable, then the quasi-cluster algebra coincides with the cluster algebra associated with $\SM$. The aim of this section is to prove that when $\SM$ is non-orientable, part of the quasi-cluster algebra structure on $\mathcal A_{\SM}$ can be found in the cluster algebra associated with the (orientable) double cover of $\SM$. Nevertheless, as we shall see, mutations in the double cover do not allow to realize quasi-cluster variables corresponding to one-sided curves. 

	Throughout this section, $\SM$ will always denote a non-orientable unpunctured marked surface of rank $n \geq 1$.

	\subsection{Lifts of triangulations and double mutations}
		We recall that each non-orientable marked surface $\SM$ admits a minimal orientable cover, its \emph{double cover} $\bSM$, endowed with a free action of $\Z_2=\ens{1,\tau}$ such that $\bSM/\Z_2 \simeq \SM$. Each element $a$ in $\A\SM$ (resp. in $\B\SM$) admits exactly two lifts $\overline a$ and $\tau \overline a$ in $\A\bSM$ (resp. in $\B\bSM$). As arcs and curves of a quasi-triangulation are simple, the element $\overline a$ and $\tau \overline a$ are compatible in $\bSM$. 


\def\crosscap#1#2#3{
	\draw[fill=white] (#1,#2) circle (#3);
	\draw (#1,#2) -- ++(45:#3);
	\draw (#1,#2) -- ++(-45:#3);
	\draw (#1,#2) -- ++(135:#3);
	\draw (#1,#2) -- ++(-135:#3);
}

\def\cc#1#2{
	\crosscap{#1}{#2}{.25}
}

\def\mobdeuxbw#1#2{
	\draw[thick] (#1,#2+2) .. controls (#1-2,#2+1) and (#1-2,#2-1) .. (#1,#2-2);
	\draw[thick] (#1,#2+2) .. controls (#1+2,#2+1) and (#1+2,#2-1) .. (#1,#2-2);

	\fill[white] (#1,#2+2) circle (.1);
	\draw (#1,#2+2) circle (.1);
	\fill (#1,#2-2) circle (.1);

	\cc {#1} {#2}
}

\def\arca#1#2{
	\draw[red] (#1,#2-2) .. controls (#1-2,#2+.75) and (#1+2,#2+.75) .. (#1,#2-2);
}

\def\arcb#1#2{
	\draw[red] (#1,#2+2) .. controls (#1-2,#2-.75) and (#1+2,#2-.75) .. (#1,#2+2);
}

\def\arcca#1#2{
	\draw[red] (#1,#2-2) .. controls (#1-3,#2+1.5) and (#1+3,#2+1.5) .. (#1,#2-2);
}

\def\arccb#1#2{
	\draw[red] (#1,#2+2) .. controls (#1-3,#2-1.5) and (#1+3,#2-1.5) .. (#1,#2+2);
}

\def\arcc#1#2{
	\draw[red] (#1,#2-2) -- (#1,#2+2);
}

\def\arcd#1#2{
	\draw[red] (#1,#2+.25) .. controls (#1+.75,#2+1) and (#1+.75,#2-1) .. (#1,#2-.25);
}

\def\mobdeuxcover#1#2{

		\draw[thick] (#1,#2+1) -- (#1+6,#2+1);
		\draw[thick] (#1,#2-1) -- (#1+6,#2-1);
		\fill[white] (#1,#2+1) circle (.1);
		\draw (#1,#2+1) circle (.1);
		\fill[white] (3,#2-1) circle (.1);
		\draw (#1+3,#2-1) circle (.1);
		\fill[white] (#1+6,#2+1) circle (.1);
		\draw (#1+6,#2+1) circle (.1);
		\fill (#1,#2-1) circle (.1);
		\fill (#1+3,#2+1) circle (.1);
		\fill (#1+6,#2-1) circle (.1);
}

\begin{figure}
	\begin{center}
		\begin{tikzpicture}[scale = .8]
			
			\draw (8,16) -- (8,-2);
			\draw (0,15) -- (16,15);
			\fill (3,15) node [above] {In the double cover};

			\fill (12,15) node [above] {With crosscaps};

			\draw[<->] (7,12) -- (9,12);
			\draw[<->] (7,6) -- (9,6);
			\draw[<->] (7,0) -- (9,0);

			\draw[red] (0,13) -- (0,11);
			\draw[red] (3,13) -- (3,11);
			\draw[red] (6,13) -- (6,11);

			\draw[red] (0,11) -- (3,13);
			\draw[red] (3,13) -- (6,11);
			\mobdeuxcover {0} {12}

			\draw[thick,<->] (3,9.5) -- (3,8.5);
			\fill (3,9) node [left] {$\mu$};

			\arcc {12} {12}
			\arca {12} {12}
 			\mobdeuxbw {12} {12}

			\draw[thick,<->] (12,9.5) -- (12,8.5);
			\fill (12,9) node [right] {$\mu$};

			\draw[red] (0,5) .. controls (2,5.5) and (4,5.5) .. (6,5);
			\draw[red] (0,6.5) .. controls (0,6.5) and (1,6.5) .. (3,7);
			\draw[red] (3,7) .. controls (5,6.5) and (6,6.5) .. (6,6.5);

			\draw[red] (0,5) -- (3,7);
			\draw[red] (3,7) -- (6,5);
			\mobdeuxcover {0} {6}

			\draw[thick,<->] (3,3.5) -- (3,2.5);
			\fill (3,3) node [left] {$\mu$};

			\arcca {12}{6} 
			\arca {12} 6
			\mobdeuxbw {12} {6}

			\draw[thick,<->] (12,3.5) -- (12,2.5);
			\fill (12,3) node [right] {$\mu$};

			\draw[red] (0,-1) .. controls (2,-.5) and (4,-.5) .. (6,-1);
			\draw[red] (0,.5) .. controls (0,.5) and (1,.5) .. (3,1);
			\draw[red] (3,1) .. controls (5,.5) and (6,.5) .. (6,.5);

			\draw[red] (0,0) -- (6,0);
			\mobdeuxcover {0} {0}

			\arcca {12} 0 
			\arcd {12} 0 
			\mobdeuxbw {12} 0 
		\end{tikzpicture}
	\end{center}
	\caption{A glossary for pictures using crosscaps in terms of double covers.}\label{Fig:glossary}
\end{figure}
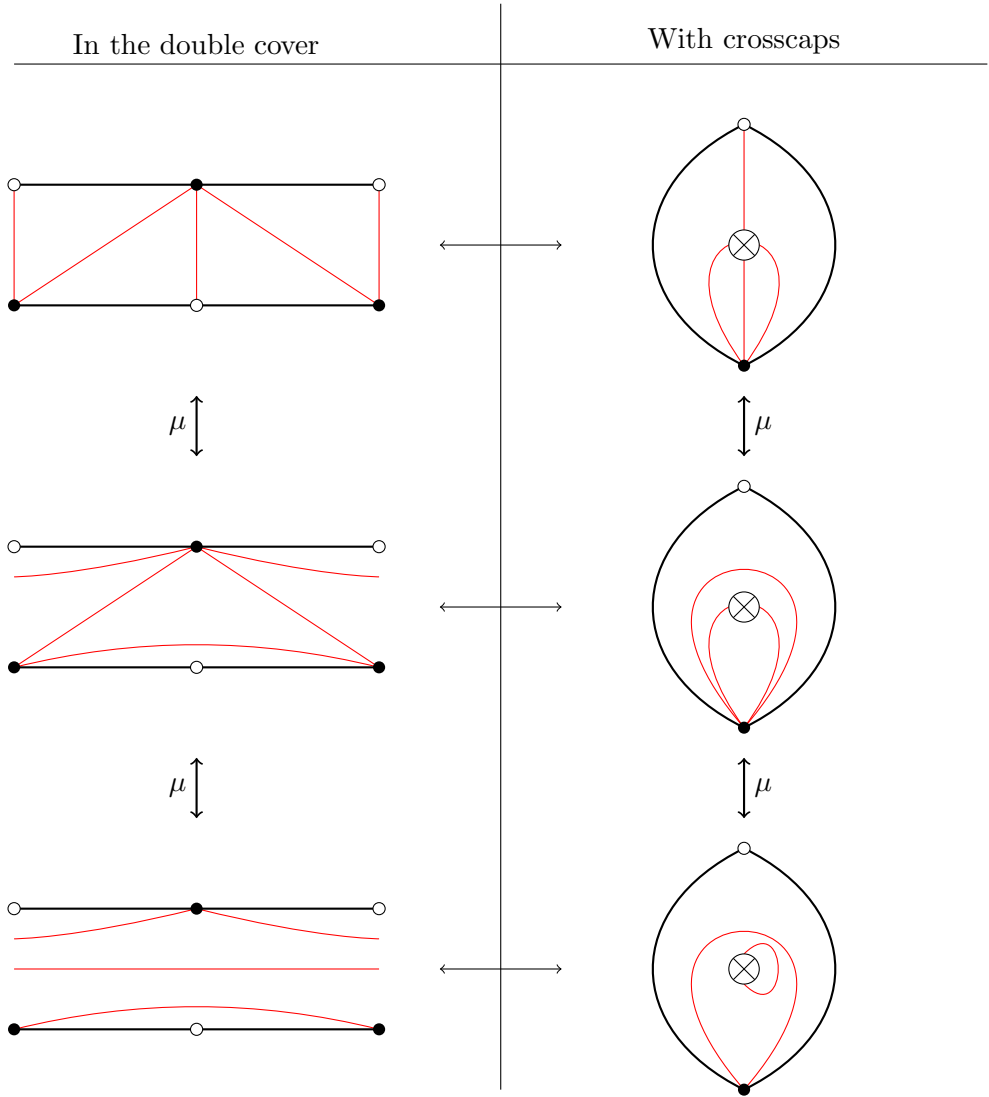

		The lift $\overline T = \ens{\overline t_1, \ldots, \overline t_n,\tau\overline t_1, \ldots, \tau\overline t_n}$ of a triangulation $T=\ens{t_1, \ldots, t_n}$ of $\SM$ is a collection of compatible arcs, which is maximal due to Euler characteristic considerations. So $\overline T$ provides a triangulation of $\bSM$ which is invariant under the $\Z_2$-action. 

We give in Figure \ref{Fig:glossary}, a glossary between quasi-triangulations of the M\"obius strip with two marked points, and the corresponding arcs or curves in its double cover, which is an annulus with two marked points on each boundary component.

		\begin{rmq}
			Note that a quasi-triangulation of $\SM$ which is not a triangulation does not lift to a triangulation of $\bSM$. Indeed, a one-sided curve in $\SM$ lifts to a non-contractible closed curve in $\bSM$ so that it is not an arc and thus it is not part of a triangulation of $\bSM$. The last quasi-triangulation in Figure \ref{Fig:glossary} gives such an example.
		\end{rmq}

		\begin{lem}\label{lem:mutorbitale}
			Let $\Sigma=(T,\x)$ be a seed associated with $\SM$ in $\mathcal F$ and let $t \in T$ be a mutable arc with respect to $T$. Then 
			$$\mu_ {\overline t} \circ \mu_{\tau \overline t}(\overline \Sigma) = \mu_ {\tau\overline t} \circ \mu_{\overline t}(\overline \Sigma) = \overline{\mu_t(\Sigma)}.$$
		\end{lem}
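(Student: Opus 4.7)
The plan is to exploit the fact that when $t$ is mutable with respect to $T$, the arcs $\overline t$ and $\tau \overline t$ have disjoint triangle-neighborhoods in $\overline T$, so that $\mu_{\overline t}$ and $\mu_{\tau \overline t}$ act on disjoint portions of the lifted seed and therefore automatically commute. Since $\Sigma$ is a seed, $T$ is a genuine triangulation; by Corollary \ref{corol:caracqmut}, mutability of $t$ says that $t$ is not the internal arc of an anti-self-folded triangle, and since $\SM$ has no punctures this forces the two triangles $\Delta_1, \Delta_2$ of $T$ adjacent to $t$ to be distinct.

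The next step is to lift this local picture to $\bSM$. The arc $\overline t$ is an edge of precisely $\overline{\Delta_1}$ and $\overline{\Delta_2}$ in $\overline T$, while $\tau \overline t$ is an edge of $\tau\overline{\Delta_1}$ and $\tau\overline{\Delta_2}$. Since the $\Z_2$-action on $\bSM$ is free we have $\tau \overline{\Delta_i} \neq \overline{\Delta_i}$, and since $\Delta_1 \neq \Delta_2$ in $\SM$ we also have $\tau \overline{\Delta_1} \neq \overline{\Delta_2}$ (any such equality would descend to $\Delta_1 = \Delta_2$). Hence the four triangles $\overline{\Delta_1}, \overline{\Delta_2}, \tau\overline{\Delta_1}, \tau\overline{\Delta_2}$ of $\overline T$ are pairwise distinct.

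With this disjointness in hand, $\mu_{\overline t}$ replaces $\overline t$ by the other diagonal of the quadrilateral $\overline{\Delta_1}\cup \overline{\Delta_2}$ and computes $x_{\overline{t'}}$ via the Ptolemy relation applied to the $x$-values on the four sides of this quadrilateral; these four sides are untouched by $\mu_{\tau\overline t}$, which operates on the disjoint quadrilateral $\tau\overline{\Delta_1}\cup \tau\overline{\Delta_2}$. Therefore the two mutations commute on $\overline{\Sigma}$, and each changes only the variable indexed by its own distinguished arc.

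It remains to check that the common value equals $\overline{\mu_t(\Sigma)}$. At the combinatorial level, the underlying triangulation is $(\overline T \setminus \ens{\overline t, \tau \overline t}) \cup \ens{\overline{t'}, \tau \overline{t'}}$, which is precisely $\overline{T'}$. At the level of variables, Lemma \ref{lem:lambdax} identifies (quasi-)cluster variables with $\lambda$-lengths, which are intrinsic geometric quantities preserved by the local isometry $\bSM \to \SM$; hence the Ptolemy identity for $x_t x_{t'}$ in $\mathcal A_{\SM}$ lifts term by term to the Ptolemy identity for $x_{\overline t} x_{\overline{t'}}$ in the cluster algebra of $\bSM$, and similarly after applying $\tau$. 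The main obstacle is really the disjointness argument in the first two paragraphs; once the four lifted triangles are seen to be distinct, the remaining commutativity and lifting statements reduce to bookkeeping via Ptolemy relations.
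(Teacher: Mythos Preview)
Your proof is correct and follows essentially the same approach as the paper's, which draws the two lifted quadrilaterals in $\bSM$, notes that their boundary sides are all distinct from $\overline t$ and $\tau\overline t$, and then applies the Ptolemy exchange relations explicitly on both levels. Your version supplies a more careful justification for why the four lifted triangles are pairwise distinct (the paper asserts this by picture), and your appeal to Lemma~\ref{lem:lambdax} for the variable comparison is unnecessary---the paper simply writes out $x_{\overline t'} = (x_{\overline a}x_{\overline c}+x_{\overline b}x_{\overline d})/x_{\overline t}$ and its $\tau$-translate directly.
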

		\begin{proof}
			Since the arc is mutable with respect to $t$, there exist $a,b,c,d \in T \sqcup \B\SM$ distinct from $t$ such that in $\SM$ we have the following situation~:
			\begin{center}
				\begin{tikzpicture}[scale = .5]
					\draw (.5,.23) .. controls (2,1) and (6,1) .. (7.5,.23);

					\draw (.5,3.77) .. controls (2,3) and (6,3) .. (7.5,3.77);

					\draw (.5,.23) .. controls (1,1) and (1,3) .. (.5,3.77);	
					\draw (7.5,.23) .. controls (7,1) and (7,3) .. (7.5,3.77);	

					\draw (.5,.23) -- (7.5,3.77);

					\fill (.5,.23) circle (.1);
					\fill (.5,3.77) circle (.1);
					\fill (7.5,3.77) circle (.1);
					\fill (7.5,.23) circle (.1);

					\fill (.5,2) node {$a$};
					\fill (7.5,2) node {$c$};
					\fill (4,3.5) node {$b$};
					\fill (4,.5) node {$d$};

					\fill (2,1) node [above] {$t$};
				\end{tikzpicture}
			\end{center}
			Therefore $\mu_t(\Sigma)$ is given by the triangulation
			\begin{center}
				\begin{tikzpicture}[scale = .5]
					\draw (.5,.23) .. controls (2,1) and (6,1) .. (7.5,.23);
					\draw (.5,3.77) .. controls (2,3) and (6,3) .. (7.5,3.77);

					\draw (.5,.23) .. controls (1,1) and (1,3) .. (.5,3.77);	
					\draw (7.5,.23) .. controls (7,1) and (7,3) .. (7.5,3.77);	

					\draw (.5,3.77) -- (7.5,.23);

					\fill (.5,.23) circle (.1);
					\fill (.5,3.77) circle (.1);
					\fill (7.5,3.77) circle (.1);
					\fill (7.5,.23) circle (.1);

					\fill (.5,2) node {$a$};
					\fill (7.5,2) node {$c$};
					\fill (4,3.5) node {$b$};
					\fill (4,.5) node {$d$};

					\fill (6,1) node [above] {$t'$};
				\end{tikzpicture}
			\end{center}
			and in the cluster $\x$, all the variables are preserved except $x_t$ which is replaced by
			$$x_t'= \frac{x_ax_c+x_bx_d}{x_t}.$$

			Let $\overline \Sigma=(\overline T, \overline {\x})$ be the lift of $\Sigma$. Then, in $\bSM$, we have the following two distinct quadrilaterals with where all the edges boundaries of the quadrilaterals are distinct from $\overline t$ and $\tau \overline t$~:
			\begin{center}
				\begin{tikzpicture}[scale = .5]
					\draw (.5,.23) .. controls (2,1) and (6,1) .. (7.5,.23);
					\draw (.5,3.77) .. controls (2,3) and (6,3) .. (7.5,3.77);

					\draw (.5,.23) .. controls (1,1) and (1,3) .. (.5,3.77);	
					\draw (7.5,.23) .. controls (7,1) and (7,3) .. (7.5,3.77);	

					\draw (.5,.23) -- (7.5,3.77);

					\fill (.5,.23) circle (.1);
					\fill (.5,3.77) circle (.1);
					\fill (7.5,3.77) circle (.1);
					\fill (7.5,.23) circle (.1);

					\fill (.5,2) node {$\overline a$};
					\fill (7.5,2) node {$\overline c$};
					\fill (4,3.6) node {$\overline b$};
					\fill (4,.4) node {$\overline d$};

					\fill (2,1) node [above] {$\overline t$};

					\draw (10.5,.23) .. controls (12,1) and (16,1) .. (17.5,.23);
					\draw (10.5,3.77) .. controls (12,3) and (16,3) .. (17.5,3.77);

					\draw (10.5,.23) .. controls (11,1) and (11,3) .. (10.5,3.77);	
					\draw (17.5,.23) .. controls (17,1) and (17,3) .. (17.5,3.77);	

					\draw (10.5,3.77) -- (17.5,.23);

					\fill (10.5,.23) circle (.1);
					\fill (10.5,3.77) circle (.1);
					\fill (17.5,3.77) circle (.1);
					\fill (17.5,.23) circle (.1);

					\fill (11,2) node [left] {$\tau \overline c$};
					\fill (18,2) node {$\tau \overline a$};
					\fill (14,3.6) node {$\tau \overline b$};
					\fill (14,.4) node {$\tau \overline d$};

					\fill (16,1) node [above] {$\tau \overline t$};
				\end{tikzpicture}
			\end{center}
			Therefore, the triangulations in the seeds $\mu_ {\overline t} \circ \mu_{\tau \overline t}(\overline \Sigma)$ and $\mu_ {\tau \overline t} \circ \mu_{\overline t}(\overline \Sigma)$ are given by~:
			\begin{center}
				\begin{tikzpicture}[scale = .5]
					\draw (.5,.23) .. controls (2,1) and (6,1) .. (7.5,.23);
					\draw (.5,3.77) .. controls (2,3) and (6,3) .. (7.5,3.77);

					\draw (.5,.23) .. controls (1,1) and (1,3) .. (.5,3.77);	
					\draw (7.5,.23) .. controls (7,1) and (7,3) .. (7.5,3.77);	

					\draw (.5,3.77) -- (7.5,.23);

					\fill (.5,.23) circle (.1);
					\fill (.5,3.77) circle (.1);
					\fill (7.5,3.77) circle (.1);
					\fill (7.5,.23) circle (.1);

					\fill (.5,2) node {$\overline a$};
					\fill (7.5,2) node {$\overline c$};
					\fill (4,3.6) node {$\overline b$};
					\fill (4,.4) node {$\overline d$};

					\fill (6,1) node [above] {$\overline t'$};

					\draw (10.5,.23) .. controls (12,1) and (16,1) .. (17.5,.23);
					\draw (10.5,3.77) .. controls (12,3) and (16,3) .. (17.5,3.77);

					\draw (10.5,.23) .. controls (11,1) and (11,3) .. (10.5,3.77);	
					\draw (17.5,.23) .. controls (17,1) and (17,3) .. (17.5,3.77);	

					\draw (10.5,.23) -- (17.5,3.77);

					\fill (10.5,.23) circle (.1);
					\fill (10.5,3.77) circle (.1);
					\fill (17.5,3.77) circle (.1);
					\fill (17.5,.23) circle (.1);

					\fill (11,2) node [left] {$\tau \overline c$};
					\fill (18,2) node {$\tau \overline a$};
					\fill (14,3.6) node {$\tau \overline b$};
					\fill (14,.4) node {$\tau \overline d$};

					\fill (12,1) node [above] {$\tau \overline t'$};
				\end{tikzpicture}
			\end{center}
			and the corresponding clusters are obtained from $\overline \x$ by replacing respectively $x_{\overline t}$ and $x_{\tau \overline t'}$ by 
			$$x_{\overline t'} = \frac{x_{\overline a}x_{\overline c}+x_{\overline b}x_{\overline d}}{x_{\overline t}} \text{ and } x_{\tau \overline t'} = \frac{x_{\tau \overline a}x_{\tau \overline c}+x_{\tau \overline b}x_{\tau \overline d}}{x_{\tau \overline t}}.$$
			Therefore, $\mu_ {\overline t} \circ \mu_{\tau \overline t}(\overline \Sigma)=\mu_ {\tau \overline t} \circ \mu_{\overline t}(\overline \Sigma)$ is the lift of the seed $\mu_t(\Sigma)$, which proves the lemma.
		\end{proof}

		\begin{rmq}
			Note that Lemma \ref{lem:mutorbitale} does not hold if $t$ is not mutable with respect to $T$. For instance, if we consider the M\"obius strip $\mathcal M_1$ with one marked point and the following triangulation $T$~:
			\begin{center}
				\begin{tikzpicture}[scale = 1]
					\draw[thick] (0,2) -- (6,2);
					\draw[thick] (0,0) -- (6,0);
					
					\draw[->] (0,0) -- (0,1);
					\draw (0,1) -- (0,2);

					\draw[->] (6,0) -- (6,1);
					\draw (6,1) -- (6,2);

					\fill (0,2) circle (.075);
					\fill (6,2) circle (.075);
					\fill (3,0) circle (.075);

					\draw[red] (0,2) -- (3,0);
					\draw[red] (6,2) -- (3,0);

					\fill[red] (2,1) node {$t$};
					\fill[red] (4,1) node {$t$};
				\end{tikzpicture}
			\end{center}
			Then $t$ is not mutable with respect to $T$ and the quasi-mutation gives the following quasi-triangulation.
			\begin{center}
				\begin{tikzpicture}[scale = 1]
					\draw[thick] (0,2) -- (6,2);
					\draw[thick] (0,0) -- (6,0);
					
					\draw[->] (0,0) -- (0,1);
					\draw (0,1) -- (0,2);

					\draw[->] (6,0) -- (6,1);
					\draw (6,1) -- (6,2);

					\fill (0,2) circle (.075);
					\fill (6,2) circle (.075);
					\fill (3,0) circle (.075);

					\draw[red] (0,1) -- (6,1);
				\end{tikzpicture}
			\end{center}

			In the double cover, which is the annulus $C_{1,1}$ with one marked point on each boundary component, the lift of $T$ is the following triangulation~:
			\begin{center}
				\begin{tikzpicture}[scale = 1]
					\draw[thick] (0,2) -- (6,2);
					\draw[thick] (0,0) -- (6,0);
					
					\draw[->] (0,0) -- (0,1);
					\draw (0,1) -- (0,2);

					\draw[->] (6,0) -- (6,1);
					\draw (6,1) -- (6,2);

					\fill (0,2) circle (.075);
					\fill (6,2) circle (.075);
					\fill (3,0) circle (.075);

					\draw[blue] (0,2) -- (3,0);
					\draw[red] (6,2) -- (3,0);

					\fill[blue] (2,1) node {$\overline t$};
					\fill[red] (4,1) node {$\tau \overline t$};
				\end{tikzpicture}
			\end{center}

			The sequence of mutations $\mu_{\tau \overline t} \circ \mu_ {\overline t}$ gives the following triangulation of the double cover~:
			\begin{center}
				\begin{tikzpicture}[scale = 1]
					\draw[thick] (0,2) -- (6,2);
					\draw[thick] (0,0) -- (6,0);
					
					\draw[->] (0,0) -- (0,1);
					\draw (0,1) -- (0,2);

					\draw[->] (6,0) -- (6,1);
					\draw (6,1) -- (6,2);

					\fill (0,2) circle (.075);
					\fill (6,2) circle (.075);
					\fill (3,0) circle (.075);

					\draw[blue] (6,2) -- (0,.667);
					\draw[blue] (3,0) -- (6,.667);

					\draw[red] (6,2) -- (0,1.33);
					\draw[red] (6,1.33) -- (0,.33);
					\draw[red] (3,0) -- (6,.33);
				\end{tikzpicture}
			\end{center}
			Whereas the sequence $\mu_{\overline t}\circ \mu_{\tau \overline t}$ gives the following triangulation of the double cover~:
			\begin{center}
				\begin{tikzpicture}[scale = 1]
					\draw[thick] (0,2) -- (6,2);
					\draw[thick] (0,0) -- (6,0);
					
					\draw[->] (0,0) -- (0,1);
					\draw (0,1) -- (0,2);

					\draw[->] (6,0) -- (6,1);
					\draw (6,1) -- (6,2);

					\fill (0,2) circle (.075);
					\fill (6,2) circle (.075);
					\fill (3,0) circle (.075);

					\draw[red] (0,2) -- (6,.667);
					\draw[red] (3,0) -- (0,.667);

					\draw[blue] (0,2) -- (6,1.33);
					\draw[blue] (0,1.33) -- (6,.33);
					\draw[blue] (3,0) -- (0,.33);
				\end{tikzpicture}
			\end{center}
			Therefore the mutations $\mu_{\overline t}$ and $\mu_{\tau \overline t}$ do not commute and moreover, their respective products do not give rise to lifts of quasi-triangulations of the M\"obius strip $\mathcal M_1$.
		\end{rmq}

	\subsection{Quotient map}
		 Given a seed $\Sigma = (T,\x)$ associated with $\SM$, we denote by $\overline \Sigma$ the seed $(\overline T,\overline \x)$ corresponding to a lift of $T$ in $\bSM$. The group $\Z_2$ acts naturally on the ambient field $\overline{\mathcal F}$ of $\mathcal A_{\bSM}$ by $\tau x_{\overline t} = x_{\tau\overline t}$ for any $t \in T \sqcup \B\SM$ and we consider the $\Z_2$-invariant ring epimorphism~:
		$$\pi:\left\{\begin{array}{rcl}
			\overline{\mathcal F} & \longrightarrow & \mathcal F \\
			x_{\overline t} & \longmapsto & x_{t} \text{ for any }t \in T \sqcup \B\SM,\\
			x_{\tau\overline t} & \longmapsto & x_{t}\text{ for any }t \in T \sqcup \B\SM.
		\end{array}\right.$$
		
		\begin{lem}\label{lem:pix}
			With the above notations, $\pi(\overline x_{\overline t}) = \pi(\overline x_{\tau \overline t}) = x_t$ for any $t \in \A\SM \sqcup \B\SM$.
		\end{lem}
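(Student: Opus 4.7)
The plan is to interpret the claim through $\lambda$-lengths via Lemma~\ref{lem:lambdax} and then exploit the geometric fact that the covering $\bSM \to \SM$ is a local isometry. For $t \in T \cup \B\SM$, the equalities $\pi(x_{\overline t}) = \pi(x_{\tau \overline t}) = x_t$ hold by the very definition of $\pi$, so the real content of the lemma concerns arcs $s \in \A\SM$ that are not in the initial triangulation $T$. An alternative purely algebraic approach would be an induction on the distance in the exchange graph of triangulations of $\SM$, applying Lemma~\ref{lem:mutorbitale} to lift each mutation step to a double mutation in $\bSM$ and checking that the Ptolemy relation is preserved; however, the geometric route is cleaner and avoids having to worry about connectivity of the triangulation subgraph under mutable flips.

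By Lemma~\ref{lem:lambdax} applied in $\SM$ (resp. in $\bSM$), the $\Sigma$-expansion of $x_s$ is $\phi_T(\lambda(s))$ and the $\overline\Sigma$-expansion of $x_{\overline s}$ is $\phi_{\overline T}(\lambda(\overline s))$. Consequently, the composite $\Z$-algebra homomorphism $\phi_T^{-1} \circ \pi \circ \phi_{\overline T}$ is precisely the map between coordinate rings of $\wt{\mathcal T}\bSM$ and $\wt{\mathcal T}\SM$ that sends both $\lambda(\overline a)$ and $\lambda(\tau \overline a)$ to $\lambda(a)$ for each $a \in T \cup \B\SM$. I would then identify this algebraic substitution with the pullback $\iota^\ast$ along the embedding $\iota : \wt{\mathcal T}\SM \hookrightarrow \wt{\mathcal T}\bSM$ that sends each decorated hyperbolic structure on $\SM$ to its (unique) $\Z_2$-equivariant lift to $\bSM$. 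Because the covering $\bSM \to \SM$ is a local isometry that carries horocycles to horocycles, one has $\lambda_{\iota(\sigma)}(\overline a) = \lambda_\sigma(a)$ for every decorated curve $a$ in $\SM$, every lift $\overline a$, and every $\sigma \in \wt{\mathcal T}\SM$. Applied to $\overline s$, this yields $\iota^\ast(\lambda(\overline s)) = \lambda(s)$, hence $\pi(x_{\overline s}) = \phi_T(\lambda(s)) = x_s$. The same argument applied to $\tau \overline s$ gives $\pi(x_{\tau \overline s}) = x_s$.

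The main subtle point will be the transition from an identity of positive functions on $\wt{\mathcal T}\SM$ to an identity of rational expressions in the ambient field $\mathcal F$. This is where Theorem~\ref{theorem:lambdaindalg} is essential: it guarantees that the $\lambda$-lengths of the elements of $T \cup \B\SM$ are algebraically independent coordinates on $\wt{\mathcal T}\SM$, so that $\phi_T$ is a genuine $\Z$-algebra isomorphism and any geometric identity between $\lambda$-length expressions lifts uniquely to an algebraic identity in $\mathcal F$. The analogous statement for $\bSM$ ensures that $\phi_{\overline T}$ is well-defined, so the entire chain of identifications composes to give the desired equality.
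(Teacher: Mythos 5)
Your argument is correct but takes a genuinely different route from the paper. The paper proceeds by induction on the distance in the exchange graph $\E\SM$: starting from $T$ itself (base case by definition of $\pi$), it reaches any triangulation containing the arc $t$ by a sequence of flips at mutable arcs, applies Lemma~\ref{lem:mutorbitale} to lift each flip to a commuting double mutation in $\bSM$, and then compares Ptolemy relations on both sides. Your proof instead works entirely through $\lambda$-lengths and the $\Z_2$-equivariant lifting map $\iota : \wt{\mathcal T}\SM \hookrightarrow \wt{\mathcal T}\bSM$, identifying the algebraic substitution $\phi_T^{-1}\circ\pi\circ\phi_{\overline T}$ with the pullback $\iota^\ast$ and using that a local isometry preserves $\lambda$-lengths. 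This is more conceptual: it makes manifest \emph{why} the lemma should hold (the $\lambda$-length of a lifted arc equals the $\lambda$-length downstairs), bypasses the need for connectivity of the mutable-flip subgraph, and treats all arcs uniformly rather than one mutation at a time. The paper's inductive route is more elementary, staying purely inside the cluster-algebraic machinery and never invoking the actual geometry.

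One point you should make explicit: you pass from an equality of functions on $\wt{\mathcal T}\SM$ to an equality in the field $\mathcal F$ by applying the ring map $\pi$ to the rational expression of $x_{\overline s}$, and this requires that the denominator of that expression does not vanish after the substitution $x_{\overline t},\, x_{\tau\overline t}\mapsto x_t$. A priori, a polynomial such as $x_{\overline t} - x_{\tau\overline t}$ is nonzero in $\overline{\mathcal F}$ but maps to zero under $\pi$, and indeed a field homomorphism $\overline{\mathcal F}\to\mathcal F$ that identifies two variables cannot exist (a nonzero ring map out of a field is injective). So $\pi$ is really only defined on a subring of $\overline{\mathcal F}$, and to know the cluster variable $x_{\overline s}$ lies in it one must invoke the Laurent phenomenon: $x_{\overline s}$ lives in $\Z[x_{\overline t}^{\pm1}]$, so its denominator is a monomial and the substitution is harmless. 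This is an implicit hypothesis in the paper's formulation of $\pi$ as well (the paper's inductive proof sidesteps the issue by only ever applying $\pi$ to polynomial Ptolemy identities and then dividing in $\mathcal F$), so it is not a defect specific to your approach, but your proof would benefit from flagging it.
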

		\begin{proof}
			The exchange graph $\E\SM$ is connected and all the mutations in this exchange graph are done in the direction of mutable arcs, see Corollary \ref{corol:caracqmut}. Let $t$ be an element in $\A\SM \sqcup \B\SM$ and let $T'$ be a triangulation of $\SM$ containing $t$ and such that the distance $d(T,T')$ between $T$ and $T'$ in $\E\SM$ is minimal. We prove the result by induction on this minimal distance. There is a triangulation $T''$ such that $t$ is mutable in $T''$ with $T'=\mu_t(T'')$ and $d(T,T'')<d(T,T')$. By induction hypothesis, the result holds for any arc in the triangulation $T''$. Since $t$ is mutable in $T''$, we are in the situation of Lemma \ref{lem:mutorbitale} and, with the same notations as in the proof of this lemma, we can apply Ptolemy relations in both $\SM$ and $\bSM$ and we get~:
			$$x_t x_{t'} = x_ax_c+x_bx_d \text{ and } x_{\overline t} x_{\overline {t'}} = x_{\overline a}x_{\overline c}+x_{\overline b}x_{\overline d}.$$
			Therefore, we get 
			$$\pi(x_{\overline t})x_{t'}=\pi(x_{\overline t})\pi(x_{\overline {t'}}) = \pi(x_{\overline a})\pi(x_{\overline c})+\pi(x_{\overline b})\pi(x_{\overline d}) = x_ax_c+x_bx_d$$
			and thus $\pi(x_{\overline t})=x_t$, which proves the lemma.
		\end{proof}

	\subsection{Exchange graphs from double covers}
		In this section, we show that for a non-orientable surface $\SM$, we can recover the exchange graph of $\SM$ in terms of $\Z_2$-invariant points in the exchange graph $\E\bSM$.

		Let $\A^{\Z_2}\bSM$ denote the set of $\Z_2$-orbits of arcs $a$ in $\bSM$ such that $a$ and $\tau a$ have no intersections. Two elements in $\A^{\Z_2}\bSM$ are called compatible if the union of the corresponding two $\Z_2$-orbits consists of pairwise compatible arcs in $\bSM$. We denote by $\Delta^{\Z_2}\bSM$ the simplicial complex on the ground set $\A^{\Z_2}\bSM$ defined as the clique complex for the compatibility relation. The vertices in $\Delta^{\Z_2}\bSM$ are the $\Z_2$-orbits of arcs $a$ in $\bSM$ such that $a$ and $\tau a$ have no intersection and the maximal simplices are the $\Z_2$-invariant triangulations of $\bSM$.

		We denote by $\E^{\Z_2}\bSM$ the dual graph of $\Delta^{\Z_2}\bSM$. The vertices in $\E^{\Z_2}\bSM$ are the $\Z_2$-invariant triangulations and two $\Z_2$-invariant triangulations $T$ and $T'$ of $\bSM$ are related by an edge in $\E^{\Z_2}\bSM$ if and only if there exists $a \in T$ and $a' \in T'$ such that $T \setminus \ens{a,\tau a} = T' \setminus \ens{a',\tau a'}$.

		\begin{prop}
			$$\E\SM \simeq \E^{\Z_2}\bSM.$$
		\end{prop}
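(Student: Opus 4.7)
The plan is to establish the isomorphism via the natural lifting map $\Phi : \E\SM \to \E^{\Z_2}\bSM$ sending a triangulation $T$ of $\SM$ to its preimage $\overline T := \pi^{-1}(T)$. First, I verify that $\Phi$ is well-defined on vertices: every arc $a \in T$ is two-sided, so $\pi^{-1}(a)$ splits as a pair of disjoint arcs $\overline a, \tau\overline a$ in $\bSM$; therefore $\overline T$ is a $\Z_2$-invariant triangulation of $\bSM$ with all orbits lying in $\A^{\Z_2}\bSM$. The inverse map $\Psi$ is given by projection $\Psi(\overline T) = \pi(\overline T)$. Each $\Z_2$-orbit $\ens{\overline a, \tau\overline a}$ with $\overline a \cap \tau\overline a = \emptyset$ projects to a simple two-sided arc of $\SM$, and since any intersection of two arcs in $\SM$ must be visible in at least one pair of their lifts in $\bSM$, the projected arcs are pairwise compatible. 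Maximality of $\overline T$ then forces $\Psi(\overline T)$ to be a maximal collection, hence a triangulation of $\SM$. The compositions $\Phi\Psi$ and $\Psi\Phi$ are manifestly identities by construction.

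Next, I check that edges correspond. For an edge $(T, \mu_t(T))$ of $\E\SM$ associated with a mutable arc $t \in T$, Lemma \ref{lem:mutorbitale} gives $\mu_{\overline t}\mu_{\tau\overline t}(\overline T) = \overline{\mu_t(T)}$, which precisely means that $\overline T$ and $\overline{\mu_t(T)}$ differ by swapping the $\Z_2$-orbit $\ens{\overline t, \tau\overline t}$ for $\ens{\overline{t'}, \tau\overline{t'}}$; hence they are joined by an edge of $\E^{\Z_2}\bSM$. Conversely, given an edge $(\overline T, \overline{T'})$ of $\E^{\Z_2}\bSM$ with $\overline{T'} = \overline T \setminus \ens{\overline a, \tau\overline a} \cup \ens{\overline{a'}, \tau\overline{a'}}$, the projected triangulations $T = \Psi(\overline T)$ and $T' = \Psi(\overline{T'})$ of $\SM$ differ by swapping only the arc $\pi(\overline a)$ for $\pi(\overline{a'})$. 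Since $T'$ consists entirely of arcs and contains no one-sided closed curve, Proposition \ref{prop:flip} forces $\pi(\overline a)$ to be mutable in $T$ (otherwise its unique quasi-flip would be a one-sided curve distinct from $\pi(\overline{a'})$), and $\pi(\overline{a'})$ must then be its flip. Therefore $(T, T')$ is an edge of $\E\SM$, and $\Phi$ is a graph isomorphism.

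The main obstacle in this plan lies in the surjectivity of the vertex bijection. One needs to argue rigorously that every $\Z_2$-invariant triangulation of $\bSM$ descends to a maximal collection of compatible arcs in $\SM$, rather than, say, failing to be maximal or projecting to curves with unexpected intersections. The two key inputs making this work are the two-sidedness built into the definition of arcs in $\SM$ (ensuring that each projected arc lifts to a pair of disjoint arcs in $\A^{\Z_2}\bSM$) and the $\Z_2$-equivariance of the covering map, which guarantees that intersections downstairs always propagate to intersections upstairs so that compatibility can be transferred coherently between the two exchange graphs.
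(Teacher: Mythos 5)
Your proof is correct and takes essentially the same approach as the paper: a lift/project bijection on vertices, plus a two-direction check that edges (mutations in $\SM$, one-orbit swaps in $\bSM$) correspond. The minor differences are that the paper's forward direction does not invoke Lemma~\ref{lem:mutorbitale} (it simply writes out that the lifts $\overline T = \overline{T_0} \sqcup \ens{\overline t, \tau\overline t}$ and $\overline{T'} = \overline{T_0} \sqcup \ens{\overline{t'}, \tau\overline{t'}}$ differ by one orbit), and in the backward direction the paper invokes Corollary~\ref{corol:caracqmut} to see that the projected arc is mutable, where you instead appeal directly to the uniqueness of the quasi-flip in Proposition~\ref{prop:flip} --- both work, and yours is arguably a touch cleaner.
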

		\begin{proof}
			The map sending a triangulation $T$ of $\SM$ to its lift $\overline T$ in $\bSM$ induces a bijection from the set of triangulations in $\SM$ to the set of $\Z_2$-invariant triangulations of $\bSM$. Thus we need to prove that two triangulations $T$ and $T'$ of $\SM$ differ by a single arc if and only if their lifts $\overline T$ and $\overline{T'}$ differ by a single $\Z_2$-orbit of arcs.

			Let $T,T'$ be triangulations of $\SM$ and assume that there is some collection $T_0$ of arcs in $\SM$ and $a,b \in \A\SM$ such that $T = T_0 \sqcup \ens a$ and $T' = T_0 \sqcup \ens b$. Then $\overline T = \overline{T_0} \sqcup \ens{a,\tau a}$ and $\overline{T'} = \overline{T_0} \sqcup \ens{b,\tau b}$. Since both $\overline T$ and $\overline{T'}$ are $\Z_2$-invariant triangulations, it follows that $a\cap \tau a = \emptyset$ and $b\cap \tau b = \emptyset$. Thus, $\overline T$ and $\overline{T'}$ are related by an edge in $\E^{\Z_2}\bSM$.

			Conversely, let $\overline T$ and $\overline{T'}$ be two distinct $\Z_2$-invariant triangulations of $\bSM$ lifting respectively the triangulations $T$ and $T'$ in $\SM$. Assume that we can write $\overline T = \overline{T_0} \sqcup \ens{a,\tau a}$ and $\overline{T'} = \overline{T_0} \sqcup \ens{b,\tau b}$. Note that $a$ and $\tau a$ are not the internal arcs of an anti-self-folded otherwise we would necessarily have $\ens{a,\tau a} = \ens{b,\tau b}$ and $\overline T=\overline{T'}$. Thus, it follows from Corollary \ref{corol:caracqmut} that $a$ is mutable with respect to $T$ and thus, it follows from Lemma \ref{lem:mutorbitale} that $\overline {T'} = \overline{\mu_a(T)}$ so that $T' = \mu_a(T)$ and $T'$ and $T$ are joined by an edge in $\E\SM$.
		\end{proof}

	\subsection{A combinatorial rule for mutations of triangulations}
		Considering exchange matrices in $\mathcal A_{\bSM}$ associated with lifts in $\bSM$ of triangulations in $\SM$, it is possible to define a combinatorial mutation rule for mutations of cluster variables in $\mathcal A_{\SM}$. Nevertheless, it appears that this method does not generalise to quasi-mutations which are not mutations.

		Let $\Sigma=(T,\x)$ be a seed associated with $\SM$ in $\mathcal F$. We fix an arbitrary orientation of the double cover $\bSM$ and we denote by $\overline B$ the matrix associated with the lift $\overline T$ of the triangulation $T$ in $\bSM$. We recall that the entries of this matrix are indexed by the lifts of arcs of $T$ and for any two arcs $v,w$ in $T$, the entry corresponding to the lifts $\overline v$ and $\overline w$ is defined as the difference 
		$$[\overline B]_{\overline v,\overline w} = n_{\overline T}(\overline v,\overline w) - n_{\overline T}(\overline w,\overline v)$$ where, for any arcs $a$ and $b$, the number $n_{\overline T}(a,b)$ is given by number of triangles in $\overline T$ bordered by $a$ and $b$ in such a way that the oriented angle formed by $a$ and $b$ in this triangle is positive, see \cite[Section 4]{FST:surfaces}.

		For any arc $v \in T$, we set 
		$$b^+_{tv} = \max([\overline B]_{\overline t,\overline v},0)+ \max([\overline B]_{\overline t,\tau \overline v},0) = \max([\overline B]_{\tau \overline t,\overline v},0)+ \max([\overline B]_{\tau \overline t,\tau \overline v},0)$$
		$$b^-_{tv} = \min([\overline B]_{\overline t,\overline v},0)+ \min([\overline B]_{\overline t,\tau \overline v},0) = \min([\overline B]_{\tau \overline t,\overline v},0)+ \min([\overline B]_{\tau \overline t,\tau \overline v},0).$$
		Note that even if the matrix $\overline B$ depends on the choice of the orientation of $\bSM$, the pair $\ens{b^+_{tv},b^-_{tv}}$ is independent on this choice.

		\begin{prop}\label{prop:mutationarc}
			Let $T$ be a triangulation of $\SM$, let $t \in T$ be mutable with respect to $T$ and let $t'$ denote its flip with respect to $T$. Then
			$$x_t x_{t'} = \prod_{v \in T} x_v^{b^+_{tv}} + \prod_{v \in T} x_v^{-b^-_{tv}}$$
			and the matrix associated with $\mu_t(T)$ is $\mu_{\overline t} \circ \mu_{\tau \overline t}(\overline B) = \mu_{\tau \overline t} \circ \mu_{\overline t}(\overline B)$.
		\end{prop}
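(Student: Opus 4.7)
The plan is to deduce both claims from the Fomin--Zelevinsky exchange rule in the cluster algebra $\mathcal A_{\bSM}$ of the double cover, transferred to $\mathcal A_{\SM}$ by applying the ring morphism $\pi$ from Lemma \ref{lem:pix}. The key preliminary observation, which I would establish first, is that $[\overline B]_{\overline t, \tau\overline t} = 0$: since $t$ is mutable, it is not the internal arc of a self-folded or anti-self-folded triangle, so by Corollary \ref{corol:caracqmut} no triangle of $\overline T$ is bordered simultaneously by $\overline t$ and $\tau \overline t$, for otherwise projecting such a triangle to $\SM$ would produce precisely a forbidden self-folded configuration at $t$.

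Given this vanishing, the matrix claim follows quickly: by the standard rule for matrix mutation, $[\overline B]_{\overline t, \tau\overline t} = 0$ implies that $\mu_{\overline t}$ and $\mu_{\tau\overline t}$ commute on $\overline B$, so the two iterated mutations in the statement coincide. By Lemma \ref{lem:mutorbitale} their common value is the matrix attached to the lift $\overline{\mu_t(T)}$, i.e.~the matrix associated with $\mu_t(T)$.

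For the exchange relation, I would write down the usual exchange identity at $\overline t$ inside $\mathcal A_{\bSM}$,
\[x_{\overline t}\, x_{\overline{t'}} \;=\; \prod_{w \in \overline T}x_w^{\max([\overline B]_{\overline t,w},\,0)} \;+\; \prod_{w \in \overline T}x_w^{-\min([\overline B]_{\overline t,w},\,0)},\]
where $\overline{t'}$ is the flip of $\overline t$ in $\overline T$. Using $[\overline B]_{\overline t, \tau\overline t} = 0$ to discard the trivial $w = \tau\overline t$ factors, I would then group the remaining factors by $\Z_2$-orbits $\{\overline v, \tau\overline v\}$ indexed by $v \in T$, and apply $\pi$. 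Lemma \ref{lem:pix}, applied to each of the arcs $t$, $t'$, and $v$, collapses each orbit to a single factor $x_v$ whose exponent is by definition exactly $b^+_{tv}$ on the first monomial and $-b^-_{tv}$ on the second, which is precisely the stated relation.

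The principal technical hurdle, I expect, is the geometric verification that $[\overline B]_{\overline t, \tau\overline t} = 0$, together with the identification of $\overline{t'}$ and $\tau\overline{t'}$ as the two lifts of the flip $t'\in \A\SM$ (needed to legitimately apply Lemma \ref{lem:pix} to $t'$). Both reduce to the combinatorics of mutable arcs supplied by Corollary \ref{corol:caracqmut} and to Lemma \ref{lem:mutorbitale}, so no tool beyond those already established is required.
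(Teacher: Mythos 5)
Your proof is correct and takes essentially the same approach as the paper, whose own proof consists of the single sentence that the result ``is a direct consequence of Lemmas~\ref{lem:mutorbitale} and~\ref{lem:pix} and of the definition of the exchange relations for a cluster algebra of geometric type.'' You have usefully filled in the details the paper leaves implicit --- in particular, the observation that $[\overline B]_{\overline t,\tau\overline t}=0$ whenever $t$ is mutable, which is precisely what makes the projection $\pi$ of the Fomin--Zelevinsky exchange relation at $\overline t$ collapse cleanly (the $w=\tau\overline t$ factor vanishing and each $\Z_2$-orbit $\{\overline v,\tau\overline v\}$ contributing exponent $b^{\pm}_{tv}$), and which, together with Lemma~\ref{lem:mutorbitale}, delivers the matrix claim.
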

		\begin{proof}
			This is a direct consequence of Lemmas \ref{lem:mutorbitale} and \ref{lem:pix} and of the definition of the exchange relations for a cluster algebra of geometric type. 
		\end{proof}
		

\subsection{Laurent Phenomenon}

Using the previous description of exchange relations corresponding to mutations in the cluster algebra of the double cover, we can now prove that quasi-cluster algebras satisfy the Laurent property.  

\begin{theorem}\label{thm:laurent} In a quasi-cluster algebra $\mathcal A_{\SM}$, any quasi-cluster variable is expressed in terms of any quasi-cluster as a Laurent polynomial with $\Z\P$ coefficients.
\end{theorem}

\begin{proof} 
	Let $v$ be a quasi-cluster variable in $\mathcal A_{\SM}$ and $\bf{x}= \{ x_t \mid t \in T\}$ a quasi-cluster associated to a quasi-triangulation $T$. 
	
	It is sufficient to prove the theorem in the case where $T$ is a triangulation. Indeed, if $T$ is a quasi-triangulation containing a one-sided curve $t$, there exists a unique arc $c \in T$, such that $c$ is the boundary of an embedded M\"obius band and $t$ is the unique one-sided curve inside it. Now let $T'$ denote the triangulation obtained by mutation of $T$ in the direction $t$, and let $\bf{x}' = \bf{x}\cup \{ x_a \} \setminus \{ x_t \}$ the new cluster where $a \in T'$ is the new arc in this triangulation. The relation between the variables is given by Definition \ref{defi:qmut} as $x_t x_a = x_c$. From this relation, it is clear that any Laurent polynomial in terms of the cluster $\bf{x}'$ can be expressed as a Laurent polynomial in terms of the cluster $\bf{x}$.
	
	We now assume that $T$ is a triangulation. First, if $v$ is a quasi-cluster variable associated to an arc, we can find a sequence of mutations, without quasi-mutations, from the initial cluster $\bf{x}$ to another cluster containing the cluster variable $v$. Then the exchange relations given in Proposition \ref{prop:mutationarc} prove that each mutation in $\mathcal A_{\SM}$ can be thought of as a composition of two cluster mutations in the cluster algebra $\mathcal A_{\bSM}$ of the double cover. Hence using the Laurent property of the cluster algebra $\mathcal A_{\bSM}$, this proves that we can express the variable $v$ as a Laurent polynomial in terms of the quasi-cluster $\bf{x}$.  
	
	The only remaining case occurs when $v$ is a quasi-cluster variable associated to a one-sided curve $d$. As the triangulation $T$ cuts $\S$ into orientable triangles and annuli, there exists an arc $t \in T$ such that $t$ and $d$ intersect at least once. Take one of these intersection and use the results of section \ref{subsub:onearconecurve} to show that there exist two arcs $y$ and $z$, not necessarily simple, with lambda lengths satisfying the following relation :
	$$x_t x_v = x_y + x_z$$
Now it suffices to prove that $x_y$ and $x_z$ can be expressed as Laurent polynomials in the cluster $\bf{x}$.  So lift the triangulation $T$ in the universal cover as a geodesic triangulation $\widetilde{T}$ of the hyperbolic plane. Choose any two lifts of $y$ and $z$ as geodesics in the hyperbolic plane starting at the same ideal point. These geodesic encounter a finite number of triangles and we can define an ideal polygon $\mathcal{P}$ containing all these triangles. The polygon $\mathcal{P}$ is triangulated by lifts of arcs of the triangulation $T$, so the lambda-lengths corresponding to these arcs correspond to lambda-length of arcs in the initial triangulation $T$. So in the cluster algebra $\mathcal A_{\mathcal P}$ defined by the triangulation of this ideal polygon, we can identify the variables in the initial seed $\bf{p}$ with elements of the initial cluster $\bf{x}$ of the quasi-cluster algebra $\mathcal A_{\SM}$. Now, in $\mathcal A_{\mathcal P}$, the variable $x_y$ and $x_z$ are cluster variables. Hence $x_y$ and $x_z$ can be expressed as Laurent polynomials in terms of the variables in the initial seed $\bf{p}$ which all correspond to variables in the cluster $\bf{x}$, which ends the proof of the theorem.
	\end{proof}

	\begin{rmq}
		Since the first draft of this paper, a new kind of algebra called \emph{Laurent Phenomenon Algebras} have been defined by Lam and Pylyavskyy \cite{LP:LPalgebras}, to generalize cluster algebras. Their setting is slightly different, but it seems that quasi-cluster algebras can be seen as a particular instance of these more general objects. 
	\end{rmq}


\section{Finite type classification}
	Cluster algebras of finite type were defined in \cite{cluster2} as cluster algebras with finitely many cluster variables and are classified by Dynkin diagrams. For cluster algebras coming from surfaces, the cluster algebras of finite type are those associated either with a disc with at least four marked points on the boundary (which correspond to Dynkin type $A$) or those associated with a disc with at least four marked points on the boundary and with one puncture (which correspond to Dynkin type $D$). In this section, we provide a similar classification for quasi-cluster algebras.

	\begin{defi}
		A quasi-cluster algebra $\mathcal A_{\SM}$ is called of \emph{finite type} if it has finitely many quasi-cluster variables or, equivalently, if the set $\qA\SM$ is finite.
	\end{defi}

	\begin{theorem}\label{theorem:classification}
		A quasi-cluster algebra $\mathcal A_{\SM}$ is of finite type if and only if $\SM$ is one of the following marked surfaces~:
		\begin{enumerate}
			\item a disc with at least four marked points on the boundary,
			\item a M\"obius strip with at least one marked point on the boundary. 
		\end{enumerate}
	\end{theorem}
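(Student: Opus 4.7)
The claim is a biconditional, and I would treat each direction separately.

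\emph{Finite type direction.} For a disc with at least four marked points on the boundary, $\mathcal A_{\SM}$ coincides with the classical cluster algebra of type $A$, which has finitely many cluster variables by \cite{FST:surfaces}. For a Möbius strip $\mathcal M_n$, I would first observe that the core is the unique one-sided simple closed curve up to isotopy (since $\pi_1(\mathcal M_n) = \Z$ with generator represented by the core). To bound the number of arcs, I would lift to the orientable double cover $\overline{\mathcal M_n}$, which is an annulus with $n$ marked points on each boundary component. The deck transformation $\tau$ has no fixed points, so each arc in $\mathcal M_n$ lifts to a pair $\ens{a, \tau a}$ of disjoint arcs in the annulus (no $\tau$-invariant arc exists, as such an arc would contain a $\tau$-fixed midpoint). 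While the annulus admits infinitely many arcs, the constraint $a \cap \tau a = \emptyset$ imposes an explicit bound on the winding number of $a$ around the core: once winding grows, $a$ and $\tau a$ are forced to cross. Hence only finitely many $\Z_2$-orbits remain and $|\qA\SM| < \infty$.

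\emph{Infinite type direction.} I would exhaust the remaining surfaces case by case, exhibiting an infinite family of quasi-arcs in each. If $\SM$ is orientable and not a disc with at least four marked points, the Fomin-Shapiro-Thurston classification already yields infinitely many cluster variables. If $\SM$ is non-orientable with at least two boundary components, then $\SM$ contains an essential two-sided simple closed curve $\gamma$ (for instance a curve parallel to one boundary component), and iterated Dehn twists along $\gamma$ of any arc crossing $\gamma$ produce infinitely many pairwise non-isotopic arcs. If $\SM$ is non-orientable with a single boundary component and non-orientable genus $k \geq 2$, I would exhibit two disjoint non-isotopic one-sided simple closed curves $c_1, c_2 \subset \SM$ (available from the connected-sum decomposition of $\S$); the boundary of a regular neighborhood of $c_1 \cup c_2$ is then an essential two-sided simple closed curve $\gamma$, and Dehn twists along $\gamma$ again yield an infinite family of arcs.

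\emph{Main obstacle.} The technical heart is the finiteness for the Möbius strip. The qualitative statement ``a simple arc cannot wind around the core too many times in $\mathcal M_n$'' must be converted into a precise intersection estimate in the annular double cover, comparing the winding numbers of $a$ and $\tau a$ as functions of the winding of $a$ and showing that they differ by enough to force a crossing whenever the winding is large. Once this bound is in place, the remaining assertions follow from the classical Fomin-Shapiro-Thurston classification and standard Dehn twist arguments in surface topology.
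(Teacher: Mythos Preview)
Your proposal is correct and follows the same overall case analysis as the paper, but the emphasis and some of the tools differ.

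For the infinite-type direction, the paper argues almost exactly as you do for $\geq 2$ boundary components (a boundary twist gives an infinite orbit of arcs). For non-orientable genus $\geq 2$ with one boundary component, however, the paper takes a shorter route: it observes that $\SM$ contains a one-holed Klein bottle $K$, and that the Dehn twist along the unique essential two-sided curve in $K$ produces infinitely many \emph{one-sided simple closed curves}, which are themselves quasi-arcs. This avoids building $\gamma$ out of a neighbourhood of $c_1 \cup c_2$. Incidentally, your phrasing there is slightly off: if $c_1$ and $c_2$ are disjoint, the regular neighbourhood of $c_1 \cup c_2$ has two boundary circles, not one; but either circle (or the two-sided curve in the Klein bottle) serves equally well as your $\gamma$, so the argument is easily repaired.

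For the finite-type direction, the paper essentially does not argue at all inside the proof: the orientable case is deferred to the classical classification, and the finiteness for the M\"obius strip is relegated to the remark after the theorem, where the number of quasi-arcs in $\mathcal M_n$ is simply computed to be $(3n^2-n+2)/2$. Your double-cover argument is more conceptual, and it does work (the deck transformation $\tau$ on the annulus $C_{n,n}$ is orientation-reversing, so it negates the winding number of an arc about the core; hence $a$ and $\tau a$ have intersection number roughly $2|k|$ for winding $k$, forcing $k$ to be bounded). That said, you have over-weighted this step: it is not the ``main obstacle'' but an elementary count, and a direct enumeration in $\mathcal M_n$ is quicker than passing to the cover.
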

	\begin{proof}
		If $\S$ is orientable, then the classification of finite type is classical. So assume that $\S$ is non-orientable. If $\S$ has two or more boundary components, then the boundary twist along one of the boundary component, which is the homeomorphism that sends the boundary to itself after a $2 \pi$ rotation, generates an infinite cyclic subgroup of the mapping class group. The orbit of a simple arc joining this boundary component to another one is infinite, and hence we have an infinite number of quasi-arcs in $\S$.
 
		If $\S$ is of non-orientable genus greater than two, then $\S$ contains a one-holed Klein bottle $K$. It is known that there exists an infinite number of one-sided simple closed curves in $K$, all in the orbit of a single element under by the action of the Dehn twist along the unique non-trivial two-sided simple closed curve in $K$. Hence we get an infinite number of quasi-arcs in $\S$.
	\end{proof}

	\begin{rmq}
		For cluster algebras of finite type, it is known that the number of cluster variables is given by the number of almost positive roots of the corresponding Dynkin diagram, see \cite{cluster2}. For the the M\"obius strip $\mathcal M_n$ with $n \geq 1$ marked points on the boundary, an easy calculation shows that the number of quasi-arcs, and thus of cluster variables in $\mathcal A_{\mathcal M_n}$ is given by 
		$$|\qA(\mathcal M_n)| = \frac{3n^2-n+2}{2},$$
		whereas the number of arcs is given by 
		$$\displaystyle |\A(\mathcal M_n)| = |\qA(\mathcal M_n)|-1 = \frac{n(3n-1)}{2}.$$

		Note in particular that the number of quasi-cluster variables in $\A(\mathcal M_n)$ does not coincide with the number of cluster variables in any cluster algebra of finite type. In particular, the quasi-cluster algebra structure on $\A(\mathcal M_n)$ is not a cluster algebra structure for any $n \geq 1$.
	\end{rmq}

	\subsection{Linear bases in quasi-cluster algebras of finite type}
		Throughout this section $\SM$ denotes a non-oriented unpunctured marked surface of rank $n \geq 1$.

		\begin{defi}
			Let $\mathcal A_{\SM}$ be a quasi-cluster algebra. A \emph{quasi-cluster monomial} (resp. a \emph{cluster monomial}) in $\mathcal A_{\SM}$ is a monomial in quasi-cluster variables belonging all to the same quasi-cluster (resp. cluster). 
		\end{defi}

		We denote by $\qW\SM$ the set of \emph{weighted quasi-triangulations}~:
		$$\qW\SM = \ens{(t_i,n_i)_{1 \leq i \leq n} \ | \ \bigcup_{i=1}^n t_i \in \qT\SM, \ n_i \geq 0},$$
		and the set of \emph{weighted triangulations} by
		$$\W\SM = \ens{(t_i,n_i)_{1 \leq i \leq n} \ | \ \bigcup_{i=1}^n t_i \in \T\SM, \ n_i \geq 0},$$
		and for any $\alpha \in \qW\SM$, we set 
		$$x_\alpha = \prod_{i=1}^n x_{t_i}^{n_i}.$$
		
		Thus, the set of quasi-cluster monomials in $\mathcal A_{\SM}$ is 
		$$\mathfrak M_{\SM} = \ens{x_\alpha \ | \ \alpha \in \qW\SM}.$$

		The quasi-cluster algebra $\mathcal A_{\SM}$ is naturally endowed with a structure of module over its ground ring $\Z\P$ and a \emph{$\ZP$-linear basis} in $\mathcal A_{\SM}$ is a free generating set of $\mathcal A_{\SM}$ for this structure.

		In \cite{CK1}, Caldero and Keller proved that the set of cluster monomials form a $\Z$-linear basis in any coefficient-free cluster algebra of finite type (in the sense of \cite{cluster2}). Here we generalise this result to quasi-cluster algebras of finite type in the above sense. Similar methods recently appeared for cluster algebras associated to arbitrary orientable surfaces, see \cite{MSW:bases}.

		\begin{theorem}\label{theorem:basis}
			Let $\mathcal A_{\SM}$ be a quasi-cluster algebra of finite type. Then the set of quasi-cluster monomials in $\mathcal A_{\SM}$ form a $\Z\P$-linear basis of $\mathcal A_{\SM}$. 
		\end{theorem}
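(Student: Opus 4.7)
By Theorem \ref{theorem:classification}, a finite-type quasi-cluster algebra $\mathcal A_{\SM}$ comes from either a disc or a M\"obius strip $\mathcal M_n$. For a disc, $\SM$ is orientable and the quasi-cluster algebra coincides with the cluster algebra of type $A_{b-3}$ (with coefficients on boundary segments) by Section \ref{ssection:orientable}; the statement is then the classical theorem of Caldero--Keller \cite{CK1}. The substantive content is thus to handle the case $\SM=\mathcal M_n$, for which I would prove spanning and linear independence separately.

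\textbf{Spanning.} By Lemma \ref{lem:lambdax}, it suffices to show that the $\lambda$-length $\lambda(\alpha)$ of any multigeodesic $\alpha$ of quasi-arcs lies in the $\Z\P$-span of the quasi-cluster monomials. The plan is to argue by induction on a suitable complexity of $\alpha$. If $\alpha$ has a self-intersection or a crossing between two of its components, apply Theorem \ref{theorem:resolution} to write $\lambda(\alpha)=\varepsilon_1\lambda(\beta)+\varepsilon_2\lambda(\gamma)$, where $\beta,\gamma$ are the two resolutions. As in the orientable case \cite{MW:resolutions,MSW:bases}, an appropriate measure (e.g. the lexicographic vector of pairwise intersection numbers with a reference triangulation, refined by the number of self-intersections of each component) strictly decreases under resolution, so after finitely many steps all remaining multigeodesics are supported on pairwise non-crossing simple curves. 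In $\mathcal M_n$ such a simple-curve support can only contain arcs, the unique one-sided simple curve $d$ (since $\pi_1(\mathcal M_n)=\Z$ with generator $d$), and a boundary-parallel two-sided simple closed curve; the latter is freely homotopic to $d^2$, so by Proposition \ref{prop:d2} its $\lambda$-length equals $\lambda(d)^2+2$, which lies in $\Z\P[x_d]$. Finally any set of pairwise compatible quasi-arcs in $\mathcal M_n$ extends to a quasi-triangulation (by maximality), so a product of $\lambda$-lengths of such a set is a genuine quasi-cluster monomial, completing the spanning argument.

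\textbf{Linear independence.} Following the Fomin--Zelevinsky strategy for finite-type cluster algebras, I would fix a reference quasi-triangulation $T_0$ with quasi-cluster $\x_{T_0}=(x_s)_{s\in T_0}$ and attach to every quasi-arc $v\in\qA\SM$ its denominator vector $\mathbf d(v)\in\Z^{T_0}$ whose $s$-coordinate is the smallest exponent of $x_s$ in the Laurent expansion provided by Theorem \ref{theorem:lambdaindalg} and Lemma \ref{lem:lambdax}, extended additively to weighted quasi-triangulations $\alpha\in\qW\SM$. Because $\mathcal A_{\SM}$ embeds in the Laurent ring $\Z\P[\x_{T_0}^{\pm 1}]$, a non-trivial $\Z\P$-linear relation among quasi-cluster monomials would force equality of the maximal poles; so it suffices to prove that $\mathbf d:\qW\SM\to\Z^{T_0}$ is injective. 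This I would do by first checking the classical geometric formula $\mathbf d(v)_s=\#(v\cap s)$ whenever $v$ and $s$ are simple and not both boundary-parallel (combinatorially reading off the denominator from the skein proof of Lemma \ref{lem:lambdax}), and then checking case by case on a quasi-triangulation $T_0$ of $\mathcal M_n$ containing the core $d$ that the map $v\mapsto\mathbf d(v)$ separates the $(3n^2-n+2)/2$ quasi-arcs and that its image exhibits a simplicial-fan structure compatible with the quasi-cluster complex $\Delta^\otimes(\mathcal M_n)$.

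\textbf{Main obstacle.} The delicate step will be the denominator-vector injectivity for $\mathcal M_n$: one has to control the interaction between the core $d$ and arcs crossing it (cf.\ case (4) of Definition \ref{defi:qmut}), where the exchange monomial $(x_a+x_b)^2+x_d^2x_ax_b$ makes the denominator behavior less transparent than in type $A$. A safety net is provided by the double-cover analysis of Section 5: $\mathcal M_n$ lifts to the annulus $C_{n,n}$, and $\Z_2$-invariance of $\mathbf d$-vectors there, combined with Proposition \ref{prop:mutationarc}, should propagate the classical injectivity of denominator vectors along each mutation sequence $\mu_{\overline t}\circ\mu_{\tau\overline t}$, reducing the problem to the finitely many quasi-arcs that involve $d$, which can be handled directly.
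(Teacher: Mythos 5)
Your reduction to $\mathcal M_n$ via Theorem \ref{theorem:classification} and your spanning argument agree with the paper: both resolve intersections via Theorem \ref{theorem:resolution}, reduce to a multigeodesic of pairwise compatible simple curves, identify the two-sided simple closed curve with $d^2$, and use Proposition \ref{prop:d2} to absorb powers of $x_{d^2}$ into polynomials in $x_d$. On this part you match the paper.

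For linear independence, however, you take a genuinely different route. The paper does \emph{not} use denominator vectors; it embeds $\mathcal A_{\mathcal M_n}$ (or rather its even part) into $\mathcal A_{C_{n,n}}$ via the double cover, observes that the image of the even quasi-cluster monomials $\mathfrak M_0$ lands inside the \emph{generic basis} of the affine cluster algebra $\mathcal A_{C_{n,n}}$ (citing \cite{Dupont:BaseAaffine,MSW:bases}), and deduces linear independence of $\mathfrak M_0$. It then handles the odd powers $\mathfrak M_1$ by multiplying by $x_d$, and proves $M_0\cap M_1=0$ by squaring a putative relation and using Theorem \ref{theorem:resolution} and Proposition \ref{prop:d2} to push the squared identity back into $\mathfrak M_0$. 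This buys a clean reduction to established facts about cluster algebras from the annulus and sidesteps any need to develop a Laurent/denominator theory for quasi-cluster algebras. Your approach would, if completed, give a more self-contained structural proof (a denominator fan for $\mathcal M_n$), which is interesting in its own right.

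That said, as written your linear independence plan has several real gaps. First, you tacitly assume a Laurent phenomenon for quasi-cluster algebras (``the Laurent expansion provided by Theorem \ref{theorem:lambdaindalg} and Lemma \ref{lem:lambdax}''), but neither result asserts Laurentness; Lemma \ref{lem:lambdax} only gives a rational expression in a fixed quasi-cluster. Without Laurentness the denominator vector $\mathbf d$ is not even defined, and the quasi-mutation (4) of Definition \ref{defi:qmut}, whose exchange term $(x_a+x_b)^2+x_d^2 x_a x_b$ is not binomial, means the Fomin--Zelevinsky Laurent induction does not transfer automatically. Second, even granting the Laurent phenomenon and injectivity of $\mathbf d$, ``forcing equality of the maximal poles'' does not follow from injectivity alone: you need sign-coherence or an extremal-monomial statement (each quasi-cluster monomial has a distinguished monomial $\x^{-\mathbf d}$ with nonzero coefficient that cannot be produced by any other quasi-cluster monomial in the relation), or else a simplicial-fan greedy argument; you gesture at both but establish neither. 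Third, you single out injectivity of $\mathbf d$ on $\qW(\mathcal M_n)$ as the main obstacle and leave it unresolved; the ``safety net'' of propagating injectivity through the double cover is not an argument, since $x_d$ has no counterpart cluster variable in $\mathcal A_{C_{n,n}}$ and the $\Z_2$-equivariant picture only directly controls the even part. This last point is precisely what forces the paper to split $\mathfrak M_{\mathcal M_n}=\mathfrak M_0\sqcup\mathfrak M_1$ and devise a separate argument for the odd part and for $M_0\cap M_1=0$; any correct proof along your lines would need an analogous device to deal with odd powers of $x_d$, and your plan does not anticipate this.
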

		\begin{proof}
			As a $\Z\P$-module, the quasi-cluster algebra $\mathcal A_{\SM}$ is generated by elements of the form $m= x_\alpha$ where $\alpha$ runs over the set of multigeodesics consisting of quasi-arcs. Thus, in order to prove that quasi-cluster monomials form a generating set over the ground ring $\ZP$, we only have to prove that each such monomial can be written as a $\ZP$-linear combination of quasi-cluster monomials. 

			Let thus $\alpha$ be a multigeodesic consisting of quasi-arcs. Resolving successively the intersections in $\alpha$, we can write $x_\alpha$ as a $\Z$-linear combination of $x_\gamma$ where each $\gamma$ is multigeodesic consisting of pairwise compatible simple geodesics. Let $\gamma$ be one of these multigeodesics. We denote by $\beta$ the subset of $\gamma$ consisting of boundary segments and we set $x_\gamma = x_\beta x_{\gamma'}$. If $\gamma' \in \qW\SM$, we are done. Otherwise, we are necessarily in the non-orientable case and it follows from Theorem \ref{theorem:classification} that $\SM=\mathcal M_n$ for some $n \geq 1$. We denote by $d$ the unique one-sided simple closed curve in $\mathcal M_n$. We thus know that $x_{\gamma'}$ is either of the form $x_{\gamma''}x_d(x_{d^2})^l$ or of the form $x_{\gamma''}(x_{d^2})^l$ for some $l \geq 0$ and some $\gamma'' \in \qW\SM$ compatible with $d$ (or equivalently with $d^2$). Now, it follows from Proposition \ref{prop:d2} that $(x_{d^2})^l$ is a polynomial in $x_d$ with coefficients in $\Z$ so that $x_{\gamma'}$ is a $\Z$-linear combination of elements of the form $x_{\gamma''}x_d^l$ where $l \geq 0$ and $\gamma''$ is compatible with $d$. In other words, $x_\gamma$ is a $\Z\P$-linear combination of elements of the form $x_{\gamma'}$ with $\gamma' \in \qW\SM$.

			We now need to prove that quasi-cluster monomials are linearly independent over the ground ring $\Z\P$. If $\SM$ is orientable, then $\mathcal A_{\SM}$ is a cluster algebra of type $A$ and the result is well-known, see for instance \cite{CK1,MSW:bases}. We thus focus on the case where $\SM$ is non-orientable so that $\SM = \mathcal M_n$ for some $n \geq 1$. The double cover $\bSM$ is therefore the annulus $C_{n,n}$ with $n$ marked points on each boundary component which we endow with an arbitrary orientation. We chose a fundamental domain for the $\Z_2$-action in $C_{n,n}$ and for any $t \in \A(\mathcal M_n) \sqcup \B(\mathcal M_n)$ we denote by $\overline t$ the lift of $t$ in this fundamental domain. And for any multigeodesic $\alpha = \ens{t_1, \ldots, t_m}$ in $\mathcal M_n$, we set $\widetilde \alpha = \ens{\overline{t_1}, \ldots, \overline{t_m}}$ the corresponding multigeodesic in $C_{n,n}$. The $\lambda$-lengths being preserved by the $\Z_2$ action on $C_{n,n}$, we can naturally identify $\mathcal A_{\mathcal M_n}$ with a subalgebra of $\mathcal A_{C_{n,n}}$ via the ring homomorphism $\iota$ sending the cluster variable $x_{t} \in \mathcal A_{\mathcal M_{n}}$ to the cluster variable $x_{\overline t} \in \mathcal A_{C_{n,n}}$. The one-sided curve $d$ in $\mathcal M_n$ has a unique lift in $C_{n,n}$, which we denote by $\overline d$. According to Proposition \ref{prop:d2}, the corresponding $\lambda$-lengths are related by $\lambda(\overline d) = \lambda(d)^2+2$ and we denote by $x_{\overline d}$ the element in the cluster algebra $\mathcal A_{C_{n,n}}$ corresponding to the image of $x_d^2+2$ under $\iota$.

			We have 
			$$\mathfrak M_{\mathcal M_n} \subset \ens{x_d^lx_\alpha\ | \ l \geq 0, \alpha \in \W(\mathcal M_n)}$$
			so that we can fix the decomposition $\mathfrak M_{\mathcal M_n} = \mathfrak M_0 \sqcup \mathfrak M_1$ where
			$$\mathfrak M_0 = \mathfrak M_{\mathcal M_n} \cap \ens{x_d^{2l}x_\alpha \ | \ l \geq 0, \alpha \in \W(\mathcal M_n)}$$ and 
			$$\mathfrak M_1 = \mathfrak M_{\mathcal M_n} \cap \ens{x_d^{2l+1}x_\alpha \ | \ l \geq 0, \alpha \in \W(\mathcal M_n)}.$$ 
			We denote respectively by $M_0$ and $M_1$ the $\Z\P$-modules which these two sets span in $\mathcal A_{\SM}$.

			We first prove that $\mathfrak M_0$ is linearly independent over $\Z\P$. For this, it is enough to prove that its image $\iota(\mathfrak M_0)$ under $\iota$ is linearly independent over the ground ring of $\mathcal A_{C_{n,n}}$. We have 
			\begin{align*}
				\iota(\mathfrak M_0) 
					\subset \ens{\iota(x_d)^{2l} \iota(x_\alpha) \ | \ \alpha \in \W(\mathcal M_n)} \\
					\subset \ens{\iota(x_d)^{2l} x_{\widetilde \alpha} \ | \ \alpha \in \W(\mathcal M_n)} \\
					\subset \ens{\iota(x_d^2)^{l} x_\alpha \ | \ \alpha \in \W(C_{n,n})}.
			\end{align*}
			Now $\ens{\iota(x_d^2+2)^{l} x_{\alpha} \ | \ \alpha \in \W(C_{n,n})}$ is a subset of the \emph{generic basis} of $\mathcal A_{C_{n,n}}$, see \cite{Dupont:BaseAaffine,MSW:bases} and therefore it is linearly independent over the ground ring and so is $\mathfrak M_0$.

			Assume now that there is some vanishing $\Z\P$-linear combination
			$$\sum_{l \geq 0} \sum_{\alpha \in \W(\mathcal M_n)} a_{l,\alpha} x_{\alpha} x_d^{2l+1} =0,$$
			then multiplying by $x_d$, we get 
			$$\sum_{l \geq 0} \sum_{\alpha \in \W(\mathcal M_n)} a_{l,\alpha} x_{\alpha} x_d^{2l+2} =0$$
			and thus each $a_{l,\alpha}$ is zero since $\mathfrak M_0$ is linearly independent over $\Z\P$. Therefore, $\mathfrak M_1$ is also linearly independent over $\Z\P$.

			We now claim that $M_0 \cap M_1 = \ens{0}$. Indeed, assume that there are $\Z\P$-linear combinations such that 
			\begin{equation}\label{eq:linind}
				\sum_{\substack{l \geq 0 \\ \alpha \in \W(\mathcal M_n)}} a_{l,\alpha} x_\alpha x_d^{2l+1}  = \sum_{\substack{k \geq 0 \\ \beta \in \W(\mathcal M_n)}} b_{k,\beta} x_\beta x_d^{2k}
			\end{equation}
			with $a_{l,\alpha}, b_{k,\beta} \in \Z\P$. Then, if we square this identity, the left-hand side is a $\Z\P$-linear combination of products of the form $x_\alpha x_d^{2l+1} x_{\alpha'} x_d^{2l'+1}$ where $\alpha, \alpha' \in \W(\mathcal M_n)$ are compatible with $d$ and where $l,l' \geq 0$. Using Theorem \ref{theorem:resolution}, the product $x_{\alpha}x_{\alpha'}$ can be written as a $\Z\P$-linear combination of $x_{\alpha''}(x_{d^2})^{l''}$ where $\alpha'' \in \W(\mathcal M_n)$ is compatible with $d^2$ and thus with $d$ and where $l'' \geq 0$. Therefore, the square of the left-hand side is a $\Z\P$-linear combination of elements of the form $x_{\alpha}x_d^{2l}$ with $\alpha \in \W(\mathcal M_n)$ compatible with $d$ and $l > 0$. Similarly, the square of the right-hand side is a $\Z\P$-linear combination of elements of the form $x_{\beta}x_d^{2k}$ with $\beta \in \W(\mathcal M_n)$ compatible with $d$ and $k \geq 0$. In particular, the square of each side of \eqref{eq:linind} is a $\Z\P$-linear combination of elements of $\mathfrak M_0$, which is known to be linearly independent over $\Z\P$. Therefore, the coefficients of each $x_{\beta}x_d^{2k}$ with $k=0$ in the square of the right-hand side has to be zero and thus $b_{0,\beta} = 0$ for any $\beta$ occurring in the right-hand side of \eqref{eq:linind}. Therefore, we can divide both sides of \eqref{eq:linind} by the smallest power of $x_d$ arising on one of the two sides and by induction, it follows that $a_{l,\alpha} = b_{k,\beta}=0$ for any $k,l \geq 0$ and $\alpha,\beta \in \W(\mathcal M_n)$. This finishes the proof of the theorem.
		\end{proof}

\section{Integrable systems associated with unpunctured surfaces}
	The aim of this section is to prove that with any unpunctured marked surface, we can naturally associate a family of discrete integrable systems satisfied by $\lambda$-lengths of curves in $\SM$. In the case where the variables corresponding to boundary segments are specialised to 1, these integrable systems provide $SL_2$-tilings of the plane, also called \emph{friezes} in the literature see for instance \cite{ARS:frises}.

	\subsection{AR-quivers for homotopy classes of curves}
		Let $\SM$ be an unpunctured marked surface which is not necessarily oriented. We denote by $\C\SM$ the set of curves in $\SM$ whose both endpoints are in $\M$ considered up to isotopy with respect to $\M$. We define a \emph{one-sided geodesic} as a curve in $\SM$ joining two marked points on the same boundary component such that its concatenation with a boundary component joining its two endpoints reverses the orientation of the surface. 

		We fix two boundary components $\d$ and $\d'$ of $\SM$ which are not necessarily distinct. Let $\mathcal H$ denote a given homotopy class of oriented curves in $\SM$ whose endpoints lie respectively on $\d$ and $\d'$ (but not necessarily on $\M$), where the homotopy do not necessarily preserve $\M$. Finally, denote by $\C_{\mathcal H}$ the set of elements in $\C\SM \sqcup \B\SM$ such that a representative of the isotopy class belongs to $\mathcal H$.

		The boundary components $\d$ and $\d'$ are one-dimensional so that they can both be oriented. If $\SM$ is oriented the boundary components $\d$ and $\d'$ are canonically oriented and we fix orientations $\omega$ and $\omega'$ respectively of $\d$ and $\d'$ which are induced by the orientation of $\SM$. If $\SM$ is not orientable, we fix arbitrary orientations such that $\omega = \omega'$ if $\d = \d'$. 

		We say that the orientations $\omega$ and $\omega'$ of $\d$ and $\d'$ are \emph{compatible with respect to $\mathcal H$} if $\mathcal H$ does not contain any one-sided geodesics. We say that $\omega$ and $\omega'$ are \emph{incompatible with respect to $\mathcal H$} otherwise and in this latter case, $\mathcal H$ consists only of one-sided geodesics. Note that if $\SM$ is oriented then the orientations of $\omega$ and $\omega'$ are always compatible with respect to $\mathcal H$.

		Let $a \in \C_{\mathcal H}$, that is a continuous map $a: [0,1] \fl \S$ such that $a(0) \in \d \cap \M$ and $a(1) \in \d' \cap \M$ or $a(0) \in \d' \cap \M$ and $a(1) \in \d \cap \M$. We define $\Sigma_0 a$ as the element of $\C_{\mathcal H}$ obtained by concatenating the boundary segment joining $a(0)$ to the next marked point along the orientation of the boundary, with the curve $a$. If $\omega$ and $\omega'$ are compatible (or incompatible, respectively), we define $\Sigma_1a$ to be the curve obtained by concatenating $a$ with the boundary segment joining $a(1)$ to the next marked point (or the previous marked point, respectively) along the boundary, see Figures \ref{fig:translationcompatible} and \ref{fig:translationnoncompatible}. We define $\Sigma_0^{-1}a$ and $\Sigma_1^{-1}a$ via the obvious inverse operations. 

		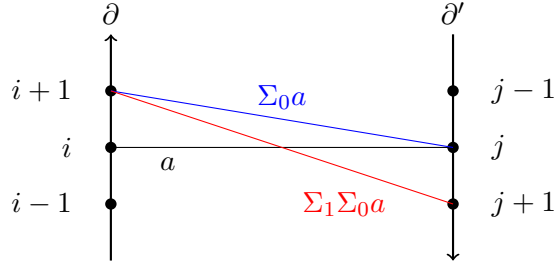
\begin{figure}
			\begin{center}
				\begin{tikzpicture}[scale = .75]
					\fill (-3,2) node [above] {$\d$};
					\fill (3,2) node [above] {$\d'$};
					\draw[thick,->] (-3,-2) -- (-3,2);
					\draw[thick,->] (3,2) -- (3,-2);

					\fill (-3,1) circle (.1);
					\fill (-3,0) circle (.1);
					\fill (-3,-1) circle (.1);

					\fill (-3.5,1) node [left] {$i+1$};
					\fill (-3.5,0) node [left] {$i$};
					\fill (-3.5,-1) node [left] {$i-1$};

					\fill (3,1) circle (.1);
					\fill (3,0) circle (.1);
					\fill (3,-1) circle (.1);

					\fill (3.5,1) node [right] {$j-1$};
					\fill (3.5,0) node [right] {$j$};
					\fill (3.5,-1) node [right] {$j+1$};

					\draw[] (-3,0) -- (3,0);
					\fill[] (-2,0) node [below] {$a$};

					\draw[blue] (-3,1) -- (3,0);
					\fill[blue] (0,.5) node [above] {$\Sigma_0 a$};

					\draw[red] (-3,1) -- (3,-1);
					\fill[red] (2,-1) node [left] {$\Sigma_1\Sigma_0 a$};
				\end{tikzpicture}
			\end{center}
			\caption{Actions of $\Sigma_0$ and $\Sigma_1$ for compatible orientations.}\label{fig:translationcompatible}
		\end{figure}

		\begin{figure}
			\begin{center}
				\begin{tikzpicture}[scale = .75]
					\fill (-3,2) node [above] {$\d$};
					\fill (3,2) node [above] {$\d$};
					\draw[thick,->] (-3,-2) -- (-3,2);
					\draw[thick,->] (3,2) -- (3,-2);

					\fill (-3,1) circle (.1);
					\fill (-3,0) circle (.1);
					\fill (-3,-1) circle (.1);

					\fill (-3.5,1) node [left] {$i+1$};
					\fill (-3.5,0) node [left] {$i$};
					\fill (-3.5,-1) node [left] {$i-1$};

					\fill (3,1) circle (.1);
					\fill (3,0) circle (.1);
					\fill (3,-1) circle (.1);

					\fill (3.5,1) node [right] {$j-1$};
					\fill (3.5,0) node [right] {$j$};
					\fill (3.5,-1) node [right] {$j+1$};

					\draw[black] (-3,0) -- (3,0);
					\fill[black] (-2,0) node [below] {$a$};

					\draw[blue] (-3,1) ..controls (-2.5,2) and (0,2) .. (0,0);
					\draw[blue] (3,0) ..controls (2.5,-2) and (0,-2) .. (0,0);
					\fill[blue] (1.75,-1.75) node [left] {$\Sigma_0 a$};

					\draw[red] (-3,1) ..controls (-2.5,.7) and (-.5,1) .. (-.1,0);
					\draw[red] (0,0) ..controls (1,-1) and (2,-1) .. (3,1);
					\fill[red] (2,-.85) node [left] {$\Sigma_1\Sigma_0 a$};

					\cc 0 0
				\end{tikzpicture}
			\end{center}
			\caption{Actions of $\Sigma_0$ and $\Sigma_1$ for non-compatible orientations.}\label{fig:translationnoncompatible}
		\end{figure}

		\begin{defi}
			The \emph{AR-quiver} $\Gamma_{\mathcal H}$ is the oriented graph whose vertices are the elements of $\C_{\mathcal H}$ and whose arrows are given by
			$$\xymatrix@!R=3pt@!C=3pt{
				& \Sigma_0 a \ar[rd] \\
				a \ar[ru] \ar[rd] && \Sigma_1 \Sigma_0 a \\
				& \Sigma_1 a \ar[ru]
			}$$
		\end{defi}

		\begin{rmq}
			If $a:[0,1] \fl \S$ is an oriented curve, we denote by $a^{\op}$ the \emph{opposite curve} given by $a^{\op}(t) = a(1-t)$ for any $t \in [0,1]$. If $\mathcal H$ is a homotopy class of oriented curves in $\C\SM$, we denote by $\mathcal H^{\op}$ the homotopy class of the opposites of curves in $\mathcal H$. If $\d$ and $\d'$ are compatible with respect to $\mathcal H$, the AR-quiver $\Gamma_{\mathcal H}$ is independent on the choice of the orientation of curves in $\mathcal H$ so that the map sending $a$ to $a^{\op}$ yields an isomorphism $\Gamma_{\mathcal H^{\op}} \simeq \Gamma_{\mathcal H}$. But if $\d$ and $\d'$ are incompatible, then the map sending $a$ to $a^{\op}$  yields an isomorphism between $\Gamma_{\mathcal H^{\op}}$ and the opposite quiver $\Gamma_{\mathcal H}^{\op}$ of $\Gamma_{\mathcal H}$, that is the quiver with the same vertices but where every arrow is reversed.
		\end{rmq}

		\begin{defi}
			We set 
			$$\tau a = \Sigma_0^{-1}\Sigma_1^{-1}a = \Sigma_1^{-1}\Sigma_0^{-1}a \text{ and } \tau^{-1} a = \Sigma_0\Sigma_1 a = \Sigma_1\Sigma_0a$$
			and the map $\tau$ is called the \emph{AR-translation}.
		\end{defi}

		\begin{rmq}
			The terminology \emph{AR-quiver} stands for \emph{Auslander-Reiten quiver} since when $\SM$ is an orientable marked surface, the oriented graphs we just constructed describe connected components of the Auslander-Reiten quivers of the generalised cluster categories associated to the surface $\SM$, see \cite{CCS1,BZ:clustercatsurfaces}. In this case, the AR-translation defined above acts on $\Gamma_{\mathcal H}$ as the Auslander-Reiten translation functor on the corresponding connected component of the Auslander-Reiten quiver of the generalised cluster category.
		\end{rmq}

		We now prove that the AR-translation endows the AR-quiver of a homotopy class of curves with the structure of a stable translation quiver. We recall that a pair $(\Gamma, \tau)$ is called a \emph{stable translation quiver} if $\tau$ is a bijection from the set $\Gamma_0$ of vertices in $\Gamma$ to itself and if for any $a \in \Gamma_0$ it induces a bijection
		$$\ens{\alpha \in \Gamma_1 \ | \  \tau a \xrightarrow{\alpha} ?} \xrightarrow{\sim} \ens{\beta \in \Gamma_1 \ | \  ? \xrightarrow{\beta} a}$$
		where $\Gamma_1$ denotes the set of arrows in $\Gamma$. For generalities on translation quivers we refer the reader to \cite{Riedtmann:translation}.

		We denote by $\Z A_\infty^{\infty}$ the quiver whose vertices are labelled by $\Z \times \Z$ and with arrows $(i,j) \fl (i+1,j)$ and $(i,j) \fl (i,j+1)$ for any $i,j \in \Z$. It is a translation quiver for the translation given by $\tau (i,j) = (i-1,j-1)$, with $i,j \in \Z$.

		\begin{prop}\label{prop:quotientZA}
			Let $\SM$ be an unpunctured marked surface and $\mathcal H$ denote the homotopy class of curves in $\SM$ whose endpoints lie on boundary components of $\SM$. Then $\Gamma_{\mathcal H}$ is a stable translation quiver isomorphic to a quotient of $\Z A^{\infty}_{\infty}$ by a group of automorphisms.
		\end{prop}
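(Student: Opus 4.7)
The plan is to construct an explicit morphism $\phi : \Z A_\infty^\infty \twoheadrightarrow \Gamma_{\mathcal H}$ of translation quivers and then to identify its deck group as a finite group of automorphisms.

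I would first verify that $(\Gamma_{\mathcal H},\tau)$ is a stable translation quiver. The operations $\Sigma_0$ and $\Sigma_1$ act on disjoint endpoints of a curve (one on $\d$ and one on $\d'$), so they commute as operations on isotopy classes, and they are invertible; hence $\tau = \Sigma_0^{-1}\Sigma_1^{-1}$ is a bijection of $\C_{\mathcal H}$ onto itself. For every $a \in \C_{\mathcal H}$ the arrows leaving $\tau a$ land at $\Sigma_0 \tau a = \Sigma_1^{-1}a$ and $\Sigma_1 \tau a = \Sigma_0^{-1}a$, which are exactly the sources of the arrows arriving at $a$. This provides the natural bijection required of a stable translation quiver and shows that the local picture around each vertex is a square mesh isomorphic to that of $\Z A_\infty^\infty$.

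Next, I would fix a base curve $a_0 \in \C_{\mathcal H}$ and set
$$\phi : \Z A_\infty^\infty \fl \Gamma_{\mathcal H}, \qquad (i,j) \longmapsto \Sigma_0^i \Sigma_1^j a_0.$$
Since $\Sigma_0$ and $\Sigma_1$ commute the map is well defined. The arrows $(i,j) \to (i+1,j)$ and $(i,j) \to (i,j+1)$ are sent respectively to applications of $\Sigma_0$ and $\Sigma_1$, and the translation of $\Z A_\infty^\infty$ is intertwined with $\tau$ by construction, so $\phi$ is a morphism of translation quivers. Surjectivity of $\phi$ reduces to the statement that every element of $\C_{\mathcal H}$ is reachable from $a_0$ by sliding endpoints along $\d$ and $\d'$, which is exactly what the group generated by $\Sigma_0^{\pm 1}$ and $\Sigma_1^{\pm 1}$ realises on $\C_{\mathcal H}$.

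The main obstacle is to describe the deck group $G$ of the cover $\phi$ and prove that it is finite. The fibre over $a_0$ is the stabiliser $G = \{(i,j) \in \Z^2 : \Sigma_0^i \Sigma_1^j a_0 = a_0\}$, and geometric input is needed to describe its generators: the relevant symmetries of $(\SM, \mathcal H)$ are controlled by whether $\d = \d'$, by whether $\omega$ and $\omega'$ are compatible with respect to $\mathcal H$, and by the number of marked points on the boundary components under consideration. I would analyse the four cases (same versus distinct boundary, compatible versus incompatible orientations) separately; in each one can exhibit an explicit finite group of translation-quiver automorphisms of $\Z A_\infty^\infty$ through which $\phi$ factors. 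The induced map $\Z A_\infty^\infty / G \to \Gamma_{\mathcal H}$ is injective by construction of $G$ and surjective by the previous paragraph, yielding the desired isomorphism of translation quivers.
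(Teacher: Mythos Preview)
Your approach is more explicit and self-contained than the paper's. The paper splits into two cases: when $\mathcal H$ contains no one-sided geodesic it reduces to the orientable situation and simply invokes the description of Auslander--Reiten components of cluster categories of surfaces from \cite{BZ:clustercatsurfaces}; when $\mathcal H$ consists of one-sided geodesics it observes directly that the action of the group generated by $\Sigma_0$ and $\Sigma_1$ on $\C_{\mathcal H}$ is free and transitive, so that $\Gamma_{\mathcal H} \simeq \Z A_\infty^\infty$ with trivial automorphism group. By contrast you build the covering $\phi$ uniformly and identify its deck group, which is a cleaner and more informative route and avoids the external citation.

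There is, however, a genuine gap: your claim that $G$ is finite is asserted through a four-case analysis that you never carry out, and in fact the claim is false. Take $\SM$ to be the annulus $C_{p,q}$ and $\mathcal H$ the class of arcs joining the two boundary circles. Then $\Sigma_0^{p}$ and $\Sigma_1^{q}$ each return the endpoints to their original marked points but alter the winding number by $\pm 1$, so the stabiliser $G = \ens{(i,j) : \Sigma_0^i\Sigma_1^j a_0 = a_0}$ is the infinite cyclic group generated by $(p,\pm q)$. The paper's own proof is consistent with this: it concludes only that $\Gamma_{\mathcal H}$ is a quotient of $\Z A_\infty^\infty$ by ``some automorphism group'', so the word ``finite'' in the statement (and the ``$\Z A_\infty$'' in place of ``$\Z A_\infty^\infty$'') appears to be an oversight. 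What your case analysis \emph{can} deliver is that $G$ is always a group of translation-quiver automorphisms of $\Z A_\infty^\infty$, which suffices for the subsequent labelling of vertices by $\Z\times\Z$ and for the integrable-system application; you should simply drop the promise of finiteness.
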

		\begin{proof}
			Assume first that $\mathcal H$ does not contain any one-sided geodesic. Then, $\Gamma_{\mathcal H}$ is isomorphic as a translation quiver to a certain $\Gamma_{\mathcal H'}$ where $\mathcal H'$ is a homotopy class of curve in an orientable marked surface. Therefore, it follows from \cite{BZ:clustercatsurfaces} that $\Gamma_{\mathcal H}$ is isomorphic to a quotient of $\Z A_{\infty}^{\infty}$ by some automorphism group. . The only new case to treat is when $\mathcal H$ contains a one-sided geodesic. In this case, we simply observe that the natural action of the free group generated by $\Sigma_0$ and $\Sigma_1$ is free and transitive on $\C_{\mathcal H}$ so that $\Gamma_{\mathcal H} \simeq \Z A_{\infty}^{\infty}$.
		\end{proof}

	\subsection{A system of equations satisfied by $\lambda$-lengths}
		According to Proposition \ref{prop:quotientZA}, we can naturally label the vertices in $\Gamma_{\mathcal H}$ by couples $(i,j)$ with $i,j \in \Z$ with the convention the $g.(i,j)$ and $(i,j)$ label the same vertex for any $g$ in the automorphism group considered in Proposition \ref{prop:quotientZA}. Moreover, $\Sigma_0$ and $\Sigma_1$ act as
		$$\Sigma_0 (i,j) = (i+1,j) \text{ and }\Sigma_1 (i,j) = (i,j+\epsilon)$$
		for any $i,j \in \Z$ where $\epsilon = 1$ if $\omega$ and $\omega'$ are compatible with respect to $\mathcal H$ and $\epsilon = -1$ otherwise, see Figure \ref{fig:ARquiverH}.

		If $|\M \cap \d| = p$ and $|\M \cap \d'| = q$, we can label the marked points on $\d$ by $\Z/p\Z$ and the marked points on $\d'$ by $\Z/q\Z$ in such a way that the curve corresponding to the couple $(i,j)$ joins the marked point $i$ (modulo $p\Z$) in $\d$ to the marked point $j$ (modulo $q\Z$) in $\d'$.

		For any pair $(i,j) \in \Z \times \Z$, we denote by $\lambda^{\mathcal H}_{(i,j)}$ the $\lambda$-length of the curve in $\C_{\mathcal H}$ represented by the couple $(i,j)$. We also denote by $\lambda^{\d}_{\ens{i,i+1}}$ the $\lambda$-length of the boundary segment of $\d$ joining the marked point labelled by $i$ (modulo $p\Z$) to the marked point labelled by $i+1$ (modulo $p\Z$) and similarly for $\lambda^{\d'}_{\ens{j,j+1}}$. We adopt the convention that $\lambda^{\d}_{\ens{i,i}}=\lambda^{\d'}_{\ens{j,j}}=1$ for any $i,j \in \Z$.

		With these notations, it follows from the resolutions given in Theorem \ref{theorem:resolution} that for any $i,j \in \Z$, the $\lambda$-lengths of arcs in $\C_{\mathcal H}$ satisfy the following system of equations~:
		\begin{equation}\label{eq:system}
			\lambda^{\mathcal H}_{(i,j)}\lambda^{\mathcal H}_{(i+1,j+\epsilon)} = \lambda^{\mathcal H}_{(i+1,j)}\lambda^{\mathcal H}_{(i,j+\epsilon)}+\lambda^{\d}_{\ens{i,i+1}}\lambda^{\d'}_{\ens{j,j+\epsilon}}.
		\end{equation}
		or equivalently
		\begin{equation}\label{eq:systemtau}
			\lambda^{\mathcal H}_{(i,j)}\lambda^{\mathcal H}_{\tau^{-1} (i,j)} = \lambda^{\mathcal H}_{\Sigma_0(i,j)}\lambda^{\mathcal H}_{\Sigma_1(i,j)}+\lambda^{\d}_{\ens{i,i+1}}\lambda^{\d'}_{\ens{j,j+\epsilon}}.
		\end{equation}
		If we are in the case where $\d=\d'$ and curves in $\mathcal H$ are homotopic to the boundary $\d$, then these $\lambda$-lengths are moreover subject to the boundary conditions 
		$$\lambda^{\mathcal H}_{(i,i+1)}=\lambda^{\d}_{\ens{i,i+1}} \text{ and } \lambda^{\mathcal H}_{(i,i)}=1.$$

		\begin{rmq}
			In the ``coefficient-free'' settings, that is, when $\lambda$-lengths $\lambda^\d_{\ens{i,i+1}}$ and $\lambda^{\d'}_{\ens{j,j+1}}$ of boundary segments are specialised to 1, equation \eqref{eq:system} becomes 
			$$\lambda^{\mathcal H}_{(i,j)}\lambda^{\mathcal H}_{(i+1,j+1)} - \lambda^{\mathcal H}_{(i+1,j)}\lambda^{\mathcal H}_{(i,j+1)} =1$$
			so that each homotopy class of curves joining two boundary components in $\SM$ gives rise to a $SL_2$-tiling of the plane in the sense of \cite{ARS:frises}. 

			Equivalently, equation \eqref{eq:systemtau} becomes
			$$\lambda^{\mathcal H}_{a}\lambda^{\mathcal H}_{\tau^{-1} a} = \lambda^{\mathcal H}_{\Sigma_0 a}\lambda^{\mathcal H}_{\Sigma_1 a}+1$$
			for any $a \in \C_{\mathcal H}$. Thus, $\lambda$-length associated with curves in $\C_{\mathcal H}$ give rise to a frieze (in the sense of \cite{AD:algorithm}) on $\Z A_\infty^{\infty}$ with values in the ring of positive real-valued functions on the decorated Teichm\"uller space of $\SM$.
		\end{rmq}

	\subsection{Integration and partial triangulations}
		In this section we prove that the solutions of the systems \eqref{eq:system} are given by cluster variables in cluster algebras of type $A$ equipped with an alternating orientation. This allows to express the $\lambda$-lengths of curves in $\C_{\mathcal H}$ in terms of $\lambda$-lengths of a partial triangulation of $\SM$ consisting of arcs in $\C_{\mathcal H}$.

		Let $k \geq 1$ and $m \geq k-1$ be integers. We denote by $\Pi_m$ the disc with $m$ marked points on the boundary. Marked points are labelled cyclically by $\Z/m\Z$. Arcs in $\Pi_m$ are parametrised by pairs $\ens{i,j}$ with $i,j \in \Z/m\Z$ such that $i \neq j$ and $i \neq j \pm 1$. For such a pair $\ens{i,j}$, we denote by $x_{i,j}$ the corresponding cluster variables in $\mathcal A_{\Pi_m}$. For any $i \in \Z/m\Z$ we denote by $x_{i,i+1}$ the coefficient in $\mathcal A_{\Pi_m}$ corresponding to the boundary component $\ens{i,i+1}$. We consider the ``zig-zag'' triangulation of $\Pi_m$ given by arcs of the form $\ens{-i,i+2}$ and $\ens{-i,i+1}$ for $i \in \Z/m\Z$ (see Figure \ref{fig:zigzagPim} below). According to the Laurent phenomenon \cite{cluster1} and to the positivity conjecture for cluster algebras of type $A$ \cite{ST:unpunctured}, he variable $x_{0,k}$ can be written as a subtraction free Laurent polynomial in the coefficients and in the arcs of the zig-zag triangulation. More precisely, for any $k \geq 2$, there exists a unique 
		$$X_k \in \Z_{\geq 0}[x_{0,-1}, \ldots, x_{2-(k-1),2-k},x_{2,3}, \ldots, x_{k-1,k}][x_{0,2}^{\pm 1}, \ldots, x_{2-k,k}^{\pm 1},x_{-1,2}^{\pm 1}, \ldots, x_{2-(k-1),k}^{\pm 1}]$$
		such that $x_{0,k} = X_k$.

		\begin{figure}
			\begin{center}
				\begin{tikzpicture}[scale=.4]

					\draw (0,0) circle (4);

					\fill (0,4) circle (.05);
					\fill (0,4) node [above] {1};

					\fill (-2,3.45) circle (.05);
					\fill (-2,3.45) node [above] {0};
					\fill (2,3.45) circle (.05);
					\fill (2,3.45) node [above] {2};

					\fill (-3.45,2) circle (.05);
					\fill (-3.45,2) node [left] {-1};
					\fill (3.45,2) circle (.05);	
					\fill (3.45,2) node [right] {3};
					
					\fill (-4,0) circle (.05);
					\fill (-4,0) node [left] {-2};
					\fill (4,0) circle (.05);
					\fill (4,0) node [right] {4};

					\draw[dashed] (-3.45,-2) circle (.05);
					\draw[dashed] (3.45,-2) circle (.05);

					\draw[red] (-2,3.45) -- (2,3.45) -- (-3.45,2) -- (3.45,2) -- (-4,0) -- (4,0);
					\draw[red,dashed] (4,0) -- (-3.45,-2);
			
					\draw (-2,3.45) .. controls (0,2.45) and (2,3) .. (4,0);
				\end{tikzpicture}
			\end{center}
			\caption{The zig-zag triangulation of $\Pi_m$.}\label{fig:zigzagPim}
		\end{figure}
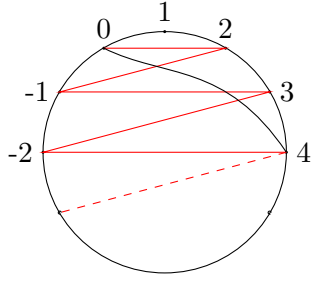

		\begin{theorem}\label{theorem:formuleclose}
			Let $\SM$ be an unpunctured marked surface and let $\mathcal H$ be a homotopy class of curves joining the boundary components $\d$ and $\d'$. Then, for any $i,j \in \Z$ and any $k \geq 2$ we have~:
			\begin{align*}
			 \lambda^{\mathcal H}_{(i,j+k\epsilon)} 
				& = X_k\left(\lambda^{\d}_{\ens{i,i-1}}, \ldots, \lambda^{\d}_{\ens{i-(k-1),i-k}}, 
				\lambda^{\d'}_{\ens{j,j+\epsilon}}, \ldots, \lambda^{\d'}_{\ens{j+(k-1)\epsilon,j+k\epsilon}}, \right.\\
				& \left.\lambda^{\mathcal H}_{(i,j)}, \ldots, \lambda^{\mathcal H}_{(i-k,j+k\epsilon)}, \lambda^{\mathcal H}_{(i-1,j)}, \ldots, \lambda^{\mathcal H}_{(i-k,j+(k-1)\epsilon)}\right).
			\end{align*}
		\end{theorem}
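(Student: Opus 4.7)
The plan is to identify the recursive data satisfied by the $\lambda$-lengths of the curves $\lambda^{\mathcal H}_{(\cdot,\cdot)}$ with the exchange data of an auxiliary unpunctured polygon, then to transfer the universal Laurent formula $X_k$ across this identification. First I would introduce the shorthand
\begin{equation*}
\mu_{l,m} := \lambda^{\mathcal H}_{(i-l,j+m\epsilon)}, \qquad a_l := \lambda^{\d}_{\{i-l+1,i-l\}}, \qquad b_m := \lambda^{\d'}_{\{j+(m-1)\epsilon,j+m\epsilon\}},
\end{equation*}
and, substituting $(i-l, j+(m-1)\epsilon)$ for $(i,j)$ in \eqref{eq:system}, observe that the $\lambda$-lengths satisfy the Ptolemy-like family of relations
\begin{equation*}
\mu_{l-1,m}\mu_{l,m-1} = \mu_{l-1,m-1}\mu_{l,m} + a_l b_m \qquad (1 \leq l,m \leq k), \qquad (\star)
\end{equation*}
which will play the role of exchange relations in an auxiliary polygon.

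Next I would set up a combinatorial dictionary with the disc $\Pi$ carrying $2k+2$ marked points on the boundary, labelled cyclically by $p_0,p_1,\ldots,p_k,q_k,q_{k-1},\ldots,q_0$. I would send the boundary segments $\{p_{l-1},p_l\}$, $\{q_m,q_{m-1}\}$, $\{p_0,q_0\}$ and $\{p_k,q_k\}$ of $\Pi$ respectively to $a_l$, $b_m$, $\mu_{0,0}$ and $\mu_{k,k}$, and every internal arc $\{p_l,q_m\}$ to $\mu_{l,m}$. Under this dictionary the Ptolemy exchange relation in $\mathcal A_\Pi$ for the quadrilateral with vertices $p_{l-1},p_l,q_m,q_{m-1}$ reads exactly $(\star)$. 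Furthermore, the zig-zag triangulation of $\Pi$ corresponds, arc by arc, to the family $\{\mu_{l,l-1} : 1 \leq l \leq k\} \cup \{\mu_{l,l} : 1 \leq l \leq k-1\}$, and the long diagonal $\{p_0,q_k\}$ to $\mu_{0,k}$, matching exactly the configuration behind the definition of $X_k$ after an appropriate relabelling of marked points.

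Finally, I would exploit $(\star)$ rewritten as $\mu_{l-1,m} = (\mu_{l-1,m-1}\mu_{l,m} + a_l b_m)/\mu_{l,m-1}$ to express every $\mu_{l,m}$ with $m>l$ as a rational function in the zig-zag data $(a_l, b_m, \mu_{l,l}, \mu_{l,l-1})$, climbing the anti-diagonals $m-l = 1, 2, \ldots, k$ of the grid one at a time. The identical formal recursion applied to cluster variables of $\mathcal A_\Pi$ produces the Laurent polynomial $X_k$ as the expansion of $x_{\{p_0,q_k\}}$ in the zig-zag triangulation and the boundary of $\Pi$. Since the two recursions are identical and start from the same initial data under the dictionary, the resulting rational expressions agree, giving the identity $\mu_{0,k} = X_k(\cdots)$ claimed by the theorem. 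The main delicate point is the bookkeeping of the two labellings: one must verify that the cyclic ordering $p_0,\ldots,p_k,q_k,\ldots,q_0$ of marked points on $\Pi$ is compatible with the convention used in the definition of $X_k$, so that the list of arguments in the statement matches the list of variables of $X_k$ variable by variable.
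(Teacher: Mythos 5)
Your proposal is correct and takes essentially the same approach as the paper: you build a dictionary between the $\lambda$-lengths $\lambda^{\mathcal H}_{(\cdot,\cdot)}$ together with the boundary $\lambda$-lengths on one side and the cluster and frozen variables of a disc on the other, observe that the relation \eqref{eq:system} is precisely the Ptolemy exchange relation under this dictionary, and transfer the Laurent expansion $X_k$ of $x_{0,k}$ in the zig-zag triangulation across the correspondence. The paper packages this as a $\Z$-algebra morphism $\pi:\mathcal A_{\Pi_m}\to\mathcal F$ (well-defined because the initial cluster together with the coefficients is algebraically independent) and then invokes \eqref{eq:system} to conclude $\pi(x_{0,k})=\lambda^{\mathcal H}_{(i,j+k\epsilon)}$; your version makes the same transfer explicit by induction along anti-diagonals of the grid. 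The only substantive point you should pin down — and you flag it yourself — is the bookkeeping matching your cyclic labels $p_0,\ldots,p_k,q_k,\ldots,q_0$ with the labels used in the definition of $X_k$ (the paper's own ranges on $m$ and on the variable list of $X_k$ are a little loose, so it is worth being precise that your disc has $2k+2$ marked points and that the two arcs $\{p_0,q_0\}$ and $\{p_k,q_k\}$ are boundary segments carrying $\mu_{0,0}$ and $\mu_{k,k}$).
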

		\begin{proof}
			Fix $k \geq 2$. With the previous notations, the AR-quiver $\Gamma_{\mathcal H}$ of $\mathcal H$ is of the form depicted in Figure \ref{fig:ARquiverH}.

			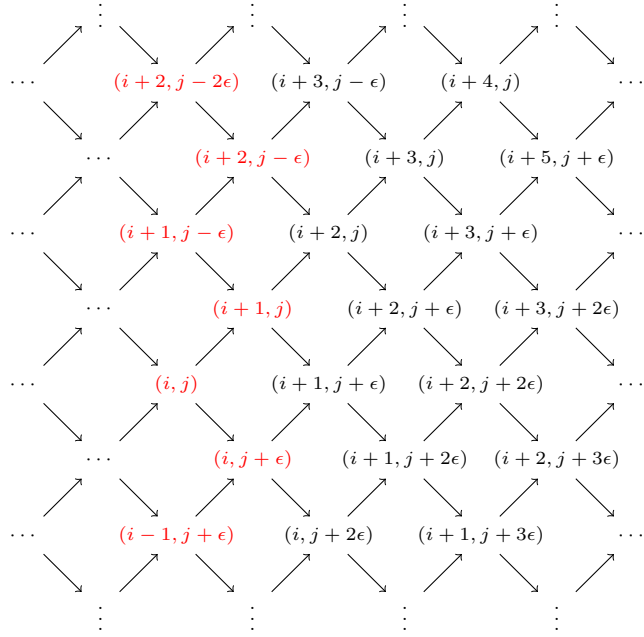
\begin{figure}
				\begin{center}
					\begin{tikzpicture}
						\tikzstyle{every node}=[font=\tiny]
						\foreach \x in {-2,0,2,4}
						{
							\foreach \y in {0,2,4,6}
							{
								\draw[->] (\x+.25,\y+.25) -- (\x+.75,\y+.75);
								\draw[->] (\x+.25,\y-.25) -- (\x+.75,\y-.75);
								\draw[->] (\x+1.25,\y+.25-1) -- (\x+1.75,\y+.75-1);
								\draw[->] (\x+.25+1,\y-.25+1) -- (\x+.75+1,\y-.75+1);
							}
						}
						
						\fill (-1,7) node {$\vdots$};
						\fill (1,7) node {$\vdots$};
						\fill (3,7) node {$\vdots$};
						\fill (5,7) node {$\vdots$};

						\fill (-1,-1) node {$\vdots$};
						\fill (1,-1) node {$\vdots$};
						\fill (3,-1) node {$\vdots$};
						\fill (5,-1) node {$\vdots$};

						\fill (-2,6) node {$\cdots$};
						\fill (-2,4) node {$\cdots$};
						\fill (-2,2) node {$\cdots$};
						\fill (-2,0) node {$\cdots$};

						\fill (-1,5) node {$\cdots$};
						\fill (-1,3) node {$\cdots$};
						\fill (-1,1) node {$\cdots$};

						\fill (6,6) node {$\cdots$};
						\fill (6,4) node {$\cdots$};
						\fill (6,2) node {$\cdots$};
						\fill (6,0) node {$\cdots$};

						\fill[red] (0,6) node {$(i+2,j-2\epsilon)$};
						\fill[red] (0,4) node {$(i+1,j-\epsilon)$};
						\fill[red] (0,2) node {$(i,j)$};
						\fill[red] (0,0) node {$(i-1,j+\epsilon)$};

						\fill (2,6) node {$(i+3,j-\epsilon)$};
						\fill (2,4) node {$(i+2,j)$};
						\fill (2,2) node {$(i+1,j+\epsilon)$};
						\fill (2,0) node {$(i,j+2\epsilon)$};

						\fill (4,6) node {$(i+4,j)$};
						\fill (4,4) node {$(i+3,j+\epsilon)$};
						\fill (4,2) node {$(i+2,j+2\epsilon)$};
						\fill (4,0) node {$(i+1,j+3\epsilon)$};

						\fill[red] (1,5) node {$(i+2,j-\epsilon)$};
						\fill[red] (1,3) node {$(i+1,j)$};
						\fill[red] (1,1) node {$(i,j+\epsilon)$};

						\fill (3,5) node {$(i+3,j)$};
						\fill (3,3) node {$(i+2,j+\epsilon)$};
						\fill (3,1) node {$(i+1,j+2\epsilon)$};

						\fill (5,5) node {$(i+5,j+\epsilon)$};
						\fill (5,3) node {$(i+3,j+2\epsilon)$};
						\fill (5,1) node {$(i+2,j+3\epsilon)$};

					\end{tikzpicture}
				\end{center}
				\caption{Local configuration in $\Gamma_{\mathcal H}$}\label{fig:ARquiverH}
			\end{figure}

			Let $m \geq k-1$ and consider a morphism $\pi$ of $\Z$-algebras defined on $\mathcal A_{\Pi_m}$ and sending for any $0 \leq l \leq k-2$
			$$\begin{array}{rcl}
				x_{-l,-l-1}& \mapsto & \lambda^{\d}_{\ens{i-l,i-l-1}} \\
				x_{2+l,2+l+1}& \mapsto & \lambda^{\d'}_{\ens{j+l\epsilon,j+(l+1)\epsilon}} \\
				x_{-l,2+l} & \mapsto & \lambda^{\mathcal H}_{\ens{i-l,j+l\epsilon}} \\
				x_{-l-1,2+l} & \mapsto & \lambda^{\mathcal H}_{\ens{i-l-1,j+l\epsilon}}.
			\end{array}$$
			It is well-defined since variables corresponding to compatible arcs and boundary components are algebraically independent over $\Z$. Then, it follows directly from equations \eqref{eq:system} applied to both $\mathcal H$ and arcs in $\Pi_m$ that $\pi(x_{0,k}) = \lambda^{\mathcal H}_{(i,j+k\epsilon)}$, which proves the theorem.
		\end{proof}

		For any homotopy class $\mathcal H$ as above, we denote $\mathcal A_{\mathcal H}$ the subalgebra of $\mathcal A_{\SM}$ generated by the cluster variables $x_v$ where $v$ runs over the arcs in $\C_{\mathcal H}$. It follows from Theorem \ref{theorem:formuleclose} that each algebra $\mathcal A_{\mathcal H}$ is either a cluster algebra of type $A$ or is an infinite analogue of a cluster algebra of type $A$. Note that the algebra $\mathcal A_{\mathcal H}$ is independent on the choice of the orientations of the curves in $\mathcal H$.

		\begin{prop}\label{prop:presentation}
			Let $\SM$ be an unpunctured surface. Then the multiplication induces an epimorphism of $\Z$-algebras~:
			$$\Z\P \otimes \Z[x_{d} \ | \ d \in \qA\SM \setminus \A\SM] \otimes \left(\bigotimes_{\mathcal H} \mathcal A_{\mathcal H}\right) \fl \mathcal A_{\SM}$$
			where $\mathcal H$ runs over the possible homotopy classes of curves joining two marked points in $\SM$ and where tensor products are taken over the integers.
		\end{prop}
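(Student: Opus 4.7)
The asserted map is the $\Z$-algebra morphism $\mu$ induced by multiplication; it is well-defined since $\mathcal A_{\SM}$ is commutative and contains the three factors as subrings of a common ring. Its image is therefore the $\Z$-subalgebra of $\mathcal A_{\SM}$ generated by $\Z\P$, by the variables $x_d$ attached to one-sided simple closed curves, and by all of the $\mathcal A_{\mathcal H}$. To prove surjectivity of $\mu$ it is thus enough to exhibit a generating set of $\mathcal A_{\SM}$ contained in this subalgebra, and the natural candidate is the presentation $\mathcal A_{\SM} = \Z\P[x_a \ | \ a \in \qA\SM]$ recorded after Lemma \ref{lem:lambdax}.

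The plan is to split the generators according to the decomposition $\qA\SM = \A\SM \sqcup (\qA\SM \setminus \A\SM)$ and to place each in the appropriate tensor factor. The variables $x_d$ indexed by one-sided simple closed curves $d$ are, tautologically, the generators of the second tensor factor. The coefficient variables $x_b^{\pm 1}$ for $b \in \B\SM$ all lie in $\Z\P$, the first tensor factor. The only non-trivial point is thus to exhibit, for each cluster variable $x_a$ with $a \in \A\SM$, a homotopy class $\mathcal H$ with $x_a \in \mathcal A_{\mathcal H}$. For this, I would observe that the two endpoints of $a$ lie on (possibly equal) boundary components $\d$ and $\d'$; fixing orientations on these components (arbitrarily if $\SM$ is non-orientable), $a$ with any choice of orientation becomes a representative of the homotopy class $\mathcal H$ of oriented curves joining $\d$ to $\d'$. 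By construction, $\mathcal A_{\mathcal H}$ is generated by the $x_v$ for $v$ an arc in $\C_{\mathcal H}$, so $x_a \in \mathcal A_{\mathcal H}$, which places $x_a$ in the image of $\mu$. Combining the three cases, every generator of $\mathcal A_{\SM}$ lies in the image of $\mu$, so $\mu$ is surjective.

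The plan carries no genuine analytic obstacle; the main point is organizational, namely the observation that each arc of $\SM$ determines a canonical homotopy class $\mathcal H$ (independent of the choice of orientation of the curve, since reversing orientation gives the same underlying unoriented arc and hence the same subalgebra), so that no arc is left uncovered by the collection of subalgebras $\mathcal A_{\mathcal H}$. I note in passing that only an epimorphism, and not an isomorphism, is asserted: injectivity would fail because the skein-type identities of Theorem \ref{theorem:resolution} together with the identity $\lambda(d^2) = \lambda(d)^2 + 2$ of Proposition \ref{prop:d2} impose non-trivial polynomial relations among quasi-cluster variables attached to distinct homotopy classes and to one-sided curves.
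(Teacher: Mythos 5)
Your proposal is correct and follows essentially the same approach as the paper: both establish well-definedness by noting that each tensor factor is a sub-$\Z$-algebra of $\mathcal A_{\SM}$, and both prove surjectivity by decomposing elements of $\mathcal A_{\SM}$ according to the partition of $\qA\SM$ into arcs and one-sided curves and placing each cluster variable $x_a$ in the subalgebra $\mathcal A_{\mathcal H}$ for the homotopy class $\mathcal H$ determined by $a$. The only cosmetic difference is that you verify surjectivity on the generators $x_a$ directly, while the paper decomposes an arbitrary element as a $\Z\P$-linear combination of monomials $x_\delta x_\eta$ and exhibits a preimage for each monomial; these are equivalent.
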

		\begin{proof}
			We first observe that the above mapping is a well-defined ring homomorphism since each term in the tensor product on the left-hand side is a sub-$\Z$-algebra of $\mathcal A_{\SM}$. Let $x \in \mathcal A_{\SM}$. By definition, we can write $x$ as a sum of terms of the form $b x_\delta x_\eta$ where $b \in \Z\P$, where $\delta$ is a multigeodesic consisting of one-sided simple closed curves and $\eta$ is a multigeodesic consisting of arcs in $\SM$. Let $\mathcal H_1, \ldots, \mathcal H_k$ be distinct homotopy classes of curves in $\SM$ such that $\eta = \eta_1 \sqcup \cdots \sqcup \eta_k$ where each geodesic in the multigeodesic $\eta_i$ belongs to $\mathcal H_i$. Therefore, $b x_\delta x_\eta$ is the image of $b \otimes x_\delta \otimes x_{\eta_1} \otimes \cdots \otimes x_{\eta_k}$ under the canonical mapping so that the mapping is surjective.
		\end{proof}

		\begin{rmq}
			Note that unless $\SM$ is a disc, the epimorphism given in Proposition \ref{prop:presentation} is not an isomorphism. Understanding the kernel of this map amounts to understanding the relations between $\lambda$-lengths of curves belonging to distinct homotopy classes. In the case of an annulus with all the boundaries specialised to 1, this situation was studied from a representation-theoretical point of view in \cite{AD:algorithm}. In this case, this amounts to compare the cluster characters associated to objects belonging to distinct connected components of the Auslander-Reiten quiver of a cluster category of type $\widetilde A$. 
		\end{rmq}

\providecommand{\bysame}{\leavevmode\hbox to3em{\hrulefill}\thinspace}
\providecommand{\MR}{\relax\ifhmode\unskip\space\fi MR }
\providecommand{\MRhref}[2]{%
  \href{http://www.ams.org/mathscinet-getitem?mr=#1}{#2}
}
\providecommand{\href}[2]{#2}

%

\end{document}